\documentclass[12pt,titlepage]{article}
\usepackage{amsmath, amsthm,amssymb}
\usepackage{graphicx}
\usepackage{caption}
\usepackage{subcaption}
\usepackage{array}
\usepackage[mathscr]{eucal}
\usepackage{color}
\usepackage{xurl} % José: I added this package to allow breaks across lines in the url
\usepackage{enumitem} % José: I added this package for references
\usepackage{float}
\usepackage{comment}
\usepackage{mathtools}
\usepackage {tikz}
\usetikzlibrary{positioning}
\definecolor {processblue}{cmyk}{0.96,0,0,0}
\usetikzlibrary{calc}
\usetikzlibrary{arrows}
\usepackage{fancybox}
\usepackage{tcolorbox}
\usepackage{pdflscape}
\usepackage{comment}
\usepackage{ulem}
\usepackage{multicol}
\usepackage{bm}
\usepackage{comment}
\usepackage{dsfont}
\usepackage{hyperref}
\newtheorem{proposition}{Proposition}

\newtheorem{example}[proposition]{Example}

\newtheorem{assumption}[proposition]{Assumption}
\theoremstyle{Definition}
\newtheorem{remark}[proposition]{Remark}
\numberwithin{equation}{section}
\numberwithin{proposition}{section}
\numberwithin{table}{section}

\newcommand{\lcx}{\preceq_{\mathrm{CX}}}

\newcommand{\Esp}{\mathrm{E}}
\newcommand{\Prob}{\mathrm{P}}

\newcommand{\Var}{\mathrm{Var}}

\newcommand{\der}{\mathrm{d}}

\newcommand{\Avec}{\boldsymbol{A}}

\newcommand{\mvec}{\boldsymbol{m}}

\newcommand{\eqD}{\overset{\mathrm{d}}{=}}

\begin{document}

\title{LINEAR RISK SHARING IN COMMUNITY-BASED INSURANCE: RUIN REDUCTION IN THE COMPOUND POISSON MODEL}
\author{{\Large M}ICHEL {\Large D}ENUIT \\
Institute of Statistics, Biostatistics and Actuarial Science (ISBA)\\
Louvain Institute of Data Analysis and Modeling (LIDAM)\\
UCLouvain\\
Louvain-la-Neuve, Belgium \\
\texttt{\small michel.denuit@uclouvain.be} 
\\[3mm] 
{\Large J}OS\'E {\Large M}IGUEL {\Large F}LORES-{\Large C}ONTR\'O \\
Institute of Statistics, Biostatistics and Actuarial Science (ISBA)\\
Louvain Institute of Data Analysis and Modeling (LIDAM)\\
UCLouvain\\
Louvain-la-Neuve, Belgium \\
\texttt{\small jose.flores@uclouvain.be} 
\\[3mm]
{\Large C}HRISTIAN {\Large Y}. {\Large R}OBERT\\
Laboratoire de Sciences Actuarielle et Financi\`ere (SAF)\\
Institut de science financi\`ere et d'assurances (ISFA)\\
Université Claude Bernard Lyon 1\\
Lyon, France\\
\texttt{\small christian.robert@univ-lyon1.fr} }
\maketitle

\begin{abstract}
This paper studies proportional risk sharing at claim occurrence time in community-based insurance. Each participant is modeled by an individual Cramér–Lundberg surplus process, and, whenever a claim is reported within the pool, its cost is redistributed according to a fixed allocation matrix. We compare the infinite-time ruin probability of each participant under stand-alone operation and under pool participation. Our main result shows that pooling reduces, for every participant, the infinite-time ruin probability when claim severities belong to a common scale family, the allocation rule satisfies full allocation and actuarial fairness, and each transfer remains bounded by an individual capacity condition. The proof relies on a convex-order comparison between the losses borne inside the pool and the corresponding stand-alone losses. We also clarify the role of these assumptions by showing that, outside this framework, pooling need not be beneficial for all participants. Numerical illustrations with Exponential and LogNormal severities support the theoretical findings and highlight how the design of proportional sharing rules affects solvency. The paper thus provides simple and interpretable sufficient conditions under which transparent linear risk-sharing arrangements improve individual solvency in community-based insurance.
 
\vfil\noindent 
\textbf{Keywords}:
Community-based insurance; linear risk-sharing; risk pooling; risk process; ruin probability.
\vfil
\noindent
\textbf{JEL Classification}: G220; G520; O120.
\end{abstract}

%\tableofcontents

\section{Introduction and motivation}
\label{sec:Introduction and motivation}

Human societies have always operated under conditions of uncertainty, where unpredictable shocks such as illness, fire or death
pose recurrent threats. These vulnerabilities are particularly acute in low-income communities where market imperfections, limited enforcement capacity, weak infrastructure, and incomplete credit and insurance markets restrict access to formal mechanisms for managing risk (Dercon, 2002; Fitzsimons et al., 2018; Saha and Qin, 2023). In such environments, households frequently rely on informal, socially embedded arrangements to cope with adverse events.

Among the range of informal strategies used to manage risk, risk pooling occupies a distinctive position. Unlike precautionary behavior, risk avoidance, or self-insurance through the accumulation of private reserves, which can be undertaken individually, risk pooling is inherently collective, relying on social interaction and cooperation to distribute losses among participants. Through this mechanism, individuals share exposure to uncertainty by smoothing shocks over time and across the group, thereby transforming potentially large or volatile losses into more manageable contributions. Much like formal insurance, which involves regular premium payments in exchange for protection against adverse events, informal risk-pooling systems redistribute losses within networks of kin, friends, or community members, thereby reducing the burden borne by any single individual (Cronk and Aktipis, 2021).

In a risk-sharing pool, participants are indemnified from a collective fund for their individual losses and, in return, make \textcolor{black}{ex-post} contributions so that the aggregate of individual payments exactly matches the total loss experienced by the pool (Denuit et al., 2022; Feng, 2023). The actuarial literature on such arrangements has investigated a variety of modeling assumptions, including the distribution of losses, the homogeneity of participants, and the exchangeability of risks, as well as alternative allocation rules governing the distribution of contributions and benefits. Among these rules, the conditional mean risk-sharing rule, originally introduced in Denuit and Dhaene (2012), has received particular attention; see, for instance, Denuit (2019), Denuit and Robert (2024), and the references therein. Under this rule, each participant’s contribution is given by the conditional expectation of their loss given the aggregate loss of the pool. Beyond this benchmark, several alternative allocation principles have also been proposed and studied.

\textcolor{black}{The studies discussed above primarily examine risk-sharing arrangements in a static setting, where interactions are confined to a single period. In practice, however, individuals often participate in long-term risk-sharing agreements that allow them to smooth adverse shocks over time. This naturally gives rise to a dynamic framework in which the temporal dimension plays a fundamental role.}

\textcolor{black}{This article adopts such a dynamic perspective on risk sharing. Recent contributions in this area include Abdikerimova et al. (2024), who show that, within the risk-sharing framework they consider, the optimal dynamic risk-sharing rule is horizon independent. They further establish a connection between their model and the classical portfolio selection problem in mathematical finance. Likewise, Boonen and Zhou (2026) formulate risk sharing as a continuous-time stochastic control problem in which participants enter social contracts to determine Pareto-efficient proportional sharing arrangements. Consistent with practical risk-sharing mechanisms, the authors impose non-negativity constraints on the risk-sharing proportions. Under this framework, they derive the Hamilton--Jacobi--Bellman equation that characterizes the optimal sharing rule, which they subsequently use to quantify the benefits of risk sharing and to investigate the impact of the imposed constraints.}

\textcolor{black}{Other recent studies have explored dynamic risk sharing in different settings. For example, Hanbali et al. (2025) examine time-varying risk-sharing arrangements in the context of life insurance, focusing on the interaction between annuity buyers and providers. Meanwhile, Tao et al. (2025) study a group of economic agents belonging to two distinct types and derive the optimal investment, consumption, and risk-sharing strategies using martingale methods.}

While the actuarial literature has primarily focused on the mathematical properties of risk-sharing rules (see Denuit et al., 2022, for a broad overview), practical implementations of collective risk sharing arise in a variety of institutional settings. In many low-income environments, informal pooling mechanisms have evolved into more structured and institutionalized arrangements, such as community-based insurance schemes (CBIs). CBIs are locally organized, member-financed schemes that pool contributions in order to provide pre-specified benefits, \textcolor{black}{formalizing} rules on contributions, eligibility, and payouts while preserving the underlying logic of collective risk sharing. See Dercon et al. (2006) for an example of funeral CBIs in Ethiopia and Tanzania, and Lemay-Boucher (2012) for an example from Benin. CBIs exist in many forms, with community-based health insurance (CBHI) being especially prominent in the literature (Ekman, 2004; Carrin et al., 2005; Mebratie et al., 2013; Eze et al., 2023), alongside schemes covering funeral expenses, agricultural risks, and other insurable events.

In practice, CBIs display substantial heterogeneity in their contribution mechanisms, with many schemes relying either on regular flat-rate contributions paid periodically by members or on fixed ex-post contributions triggered by specific events. For instance, Dercon et al. (2006) document that, in Tanzanian funeral CBIs, participants contribute only when a funeral occurs, whereas in Ethiopian CBIs members typically pay regular monthly contributions. The contribution mechanism therefore defines the risk-sharing rule: it specifies the allocation borne by each member in exchange for participation in collective coverage. In many real-world settings, these designs prioritize simplicity and transparency, two features that are essential for building trust and sustaining participation when information asymmetries are severe and administrative capacity is limited (Dror et al., 2016; Fadlallah et al., 2018; Levantesi and Piscopo, 2022).
In this paper, we study whether participation in a CBI yields a measurable solvency benefit for individuals relative to stand-alone operation.

Following Denuit and Robert (2023), we model individual insurance accounts within a community by means of the classical Cramér–Lundberg risk process; see Asmussen and Albrecher (2010) and Mandjes and Boxma (2023) for standard references on this model and its variants. Pooling is introduced directly at claim occurrence times, so that each insured loss is immediately redistributed across participants according to a prescribed sharing rule. While the conditional mean risk-sharing rule studied by Denuit and Robert (2023) enjoys strong theoretical properties and is naturally motivated by conditional averaging, it may be computationally demanding for some probability models (Yang and Wei, 2025). More importantly for our present purpose, it may be less transparent for non-technical participants than simpler linear arrangements. In community-based settings, where legitimacy and participation often depend on the perceived clarity of the contribution rule, this interpretability issue is not merely cosmetic. Motivated by these considerations, we focus here on linear risk-sharing rules under which each participant bears a fixed fraction of each loss reported in the pool. We then compare the infinite-time ruin probability of each participant under stand-alone operation and under pool participation.

\textcolor{black}{It is important to note that, unlike most previous studies on risk sharing in the actuarial science literature, where individual losses are first aggregated into a common pool and only subsequently redistributed (see, for example, Chapter 6 in Feng (2023)), our model allocates individual losses among pool members immediately upon their occurrence. Moreover, existing models typically assume that risk sharing is activated only as a mechanism to prevent ruin, that is, support from peers is provided only when an insurer\rq s surplus is about to become (or becomes) negative. See, e.g., Albrecher et al. (2017) and Gu et al. (2018). By contrast, in the present framework, losses are shared instantaneously among pool members regardless of whether any participant is close to ruin, making risk sharing an integral and continuous feature of the surplus process rather than an intervention triggered only by the risk of insolvency.}

The main contribution of this paper is to provide tractable sufficient conditions under which joining a community-based insurance pool with a linear sharing rule reduces the infinite-time ruin probability of every participant. In contrast with the conditional mean rule, the proportional mechanism studied here is fully specified by a deterministic allocation matrix and is therefore easier to explain, implement, and monitor. Our analysis shows that this simplicity does not prevent rigorous solvency improvements: under appropriate assumptions, pooling makes each participant weakly better off in ruin-probability terms. The paper thus contributes to the actuarial theory of risk sharing by identifying a class of transparent allocation rules for which individual participation can be justified through a classical solvency criterion.

%The three key assumptions underlying our main result deserve further discussion. First, actuarial fairness ensures that, for each participant, the expected claim amount borne inside the pool coincides with the expected claim amount borne under stand-alone operation. This condition isolates the pure effect of risk reallocation from any systematic transfer in expectation and guarantees that the comparison is not driven by hidden subsidies across participants. Second, the capacity constraint requires that the expected share assumed by participant $i$ from any other participant’s claim should not exceed participant $i$’s own expected claim amount. Economically, this prevents the sharing rule from assigning to a participant an exposure that is disproportionate relative to their own risk profile; mathematically, it controls the extent of cross-participant transfers and is instrumental in establishing the relevant convex-order comparison. Third, the assumption that claim severities belong to a common scale family means that heterogeneity across participants operates through scale parameters only, while the normalized claim-size distributions coincide. This structure is restrictive, but it is precisely what allows us to compare pooled and stand-alone losses through dispersion arguments. It should therefore be viewed as a transparent sufficient condition for ruin reduction rather than as a universal description of all community-based insurance settings.

The remainder of the paper is organized as follows. Section \ref{sec:Loss model} introduces the compound Poisson loss model and the corresponding individual and pooled surplus processes. Section \ref{sec:Linear risk sharing at occurrence time} defines the proportional risk-sharing rule at occurrence time and establishes the main result showing that pooling reduces individual infinite-time ruin probabilities under some conditions. This section also provides numerical illustrations. Section \ref{sec:Discussions of the assumptions} discusses the scope and limitations of the assumptions. 
\textcolor{black}{Section \ref{Pareto-OptimalityforRuinProbabilities-Section5} discusses the design of allocation matrices through scalarized optimization problems and illustrates how Pareto-efficient candidates can be
obtained under the full-allocation, actuarial-fairness and capacity constraints.} Section \ref{sec:Conclusion} concludes.
\textcolor{black}{The appendices provide additional material: Appendix \ref{sec:Incorporating dependencies} discusses an extension incorporating dependence through common shocks, Appendix \ref{sec:Mathematical Proofs} details the mathematical proofs, Appendix \ref{sec:Examples illustrating the role of AssumptionSF} gives supplementary examples illustrating the role of the scale-family assumption, and Appendix \ref{sec:Supplementary example illustrating the role of AssumptionCC} reports an additional numerical example on the violation of the capacity constraint.}

\section{Loss model}
\label{sec:Loss model}

\subsection{Individual accounts}

Consider a community-based insurance pool with $n$ participants, indexed by $i=1,\ldots,n$, and starting operations at time $0$. Each participant is associated with an individual insurance account reflecting his or her own claims experience.

For participant $i$, let \textcolor{black}{$\{N_{i,t}, t\geq 0\}$} denote the number of claims reported over the \textcolor{black}{interval $(0,t]$}, with $N_{i,0}=0$. Throughout the paper, $\{N_{i,t},\, t\ge 0\}$ is assumed to be a Poisson process with intensity $\lambda_i>0$. The size of the $k$th claim reported by participant $i$ is denoted by $Y_{i,k}$. For each fixed $i$, the random variables $Y_{i,1},Y_{i,2},\ldots$ are assumed to be non-negative, independent and identically distributed, with common distribution function $F_{Y_i}$, mean $\textcolor{black}{b_{i}=\Esp[Y_{i,1}]}$, and finite variance $\sigma^2_{Y_{i}}=\Var[Y_{i,1}]$. Claim sizes are furthermore assumed to be independent of the counting process $\{N_{i,t},\, t\ge 0\}$.

The cumulative claim amount reported by participant $i$ up to time $t$ is
\begin{align}
    S_{i,t}=\sum_{k=1}^{N_{i,t}} Y_{i,k}, \qquad t\ge 0,
    \label{LinearRiskSharing-Section2-Equation1}
\end{align}
with $S_{i,0}=0$. Hence, for each $i$, $\{S_{i,t},\, t\ge 0\}$ is a compound Poisson process. We also assume that the processes $\{S_{i,t},\, t\ge 0\}$ are mutually independent for $i=1,\ldots,n$.

Participant $i$ contributes to the scheme at constant rate $c_i$ and provides an initial reserve $\kappa_i\ge 0$. The corresponding individual surplus process is
\begin{align}
    V_{i,t}=c_i t-S_{i,t}+\kappa_i, \qquad t\ge 0.
    \label{LinearRiskSharing-Section2-Equation2}
\end{align}
We take the contribution rate to be the expected claim amount per unit of time, multiplied by a common safety loading:
\begin{align}
    c_i=(1+\eta)\Esp[S_{i,1}]=(1+\eta)\lambda_i\textcolor{black}{b_{i}},
    \label{LinearRiskSharing-Section2-Equation3}
\end{align}
where $\eta>0$ is the safety loading coefficient.

\subsection{Pooled fund}

When participants join the pool, contributions, claim amounts, and initial reserves are aggregated as
\begin{align}
    c=\sum_{i=1}^n c_i, \qquad
    S_t=\sum_{i=1}^n S_{i,t}, \qquad
    \kappa=\sum_{i=1}^n \kappa_i.
    \label{LinearRiskSharing-Section2-Equation4}
\end{align}
The aggregate surplus of the pooled fund at time $t$ is then given by
\begin{align}
    V_t=\sum_{i=1}^n V_{i,t}=ct-S_t+\kappa, \qquad t\ge 0.
    \label{LinearRiskSharing-Section2-Equation5}
\end{align}
Both \eqref{LinearRiskSharing-Section2-Equation2} and \eqref{LinearRiskSharing-Section2-Equation5} are of classical Cramér--Lundberg type.

At the pool level, the total number of claims reported over \textcolor{black}{$(0,t]$} is $N_t=\sum_{i=1}^n N_{i,t}$.
Since the participant-specific claim arrival processes are independent Poisson processes, $\{N_t,\, t\ge 0\}$ is itself a Poisson process with intensity $\lambda_\bullet=\sum_{i=1}^n \lambda_i$. Let
\begin{align}
    T_k=\inf\{t\ge 0: N_t=k\},
    \label{LinearRiskSharing-Section2-Equation6}
\end{align}
denote the occurrence time of the $k$th claim at the pool level. The aggregate claim amount up to time $t$ can then be written as
\begin{align}
    S_t \overset{\textcolor{black}{\mathrm{d}}}{=} \sum_{k=1}^{N_t} X_k, \qquad t\ge 0,
    \label{LinearRiskSharing-Section2-Equation7}
\end{align}
where $X_1,X_2,\ldots$ are independent and identically distributed with common distribution function
$F_X=\frac{1}{\lambda_\bullet}\sum_{i=1}^n \lambda_i F_{Y_i}$.
Therefore, $\{S_t,\, t\ge 0\}$ is a compound Poisson process with intensity $\lambda_\bullet$ and claim-size distribution $F_X$.

More explicitly, the $k$th claim amount at the pool level can be represented as
\begin{align}
    X_k=\sum_{j=1}^n I_{j,k}\, Y_{j,N_{j,T_k}},
    \label{LinearRiskSharing-Section2-Equation8}
\end{align}
where, for each $k$, exactly one indicator among $I_{1,k},\ldots,I_{n,k}$ is equal to $1$, namely the one corresponding to the participant who generated the $k$th claim. In particular,
\[
\Prob[I_{j,k}=1]=\frac{\lambda_j}{\lambda_\bullet}, \hspace{2mm} j=1,\ldots,n,\text{ and }
\sum_{j=1}^n I_{j,k}=1.
\]
Hence, $X_k$ is a mixture of the individual claim severities, with mixing weights proportional to the claim frequencies. The summands in \eqref{LinearRiskSharing-Section2-Equation8} are mutually exclusive in the sense of Dhaene and Denuit (1999).

\section{Linear risk sharing at occurrence time}
\label{sec:Linear risk sharing at occurrence time}

\subsection{Proportional risk sharing rule}
\label{sec:Proportional risk sharing at occurrence time}

Section \ref{sec:Loss model} introduces the aggregate claim process at pool level through the sequence of
claim occurrence times $(T_k)_{k\ge 1}$ and the corresponding claim amounts
$(X_k)_{k\ge 1}$. We now specify how each aggregate claim $X_k$ is redistributed among
participants when it occurs.

For each ordered pair $(i,j)\in\{1,\ldots,n\}^2$, let $a_{i,j}\in[0,1]$ denote the fraction of a
claim generated by participant $j$ that is borne by participant $i$ within the pool. Following
Abdikerimova and Feng (2022), we refer to these coefficients as transfer ratios, and
we collect them in the allocation matrix $\Avec=(a_{i,j})_{1\le i,j\le n}$. In particular,
$a_{i,i}$ is the retention level of participant $i$, that is, the fraction of his or her own claim
that remains on the individual account after pooling.

A basic requirement is that each claim be fully redistributed across the members of the pool.
This is the full-allocation property, also referred to as budget balance; see
Denuit et al.\ (2022) and Charpentier and Ratz (2025). In matrix form, it requires
\begin{align}
    \sum_{i=1}^{n} a_{i,j}=1, \qquad j=1,\ldots,n,
    \label{PropertiesoftheLinearRisk-SharingRule-Section3-Equation2}
\end{align}
so that $\Avec$ is column-stochastic. Under \eqref{PropertiesoftheLinearRisk-SharingRule-Section3-Equation2},
the whole amount of each loss is allocated within the pool, with no residual gain or deficit
at occurrence time.

We now define the individual payments generated by the $k$th pooled claim. Recall from
\eqref{LinearRiskSharing-Section2-Equation8} that the $k$th claim at pool level is generated
by exactly one participant. Let $Z_{i,j,k}$ denote the payment made by participant $i$ at time
$T_k$ when the $k$th claim is generated by participant $j$. Then
\begin{align}
    Z_{i,j,k}=a_{i,j}\, I_{j,k}\, Y_{j,N_{j,T_k}}, \qquad i,j=1,\ldots,n,
    \label{LinearRiskSharing-Section2-Equation9}
\end{align}
where $I_{j,k}$ is the indicator introduced in Section~\ref{sec:Loss model}. Since exactly one indicator among
$I_{1,k},\ldots,I_{n,k}$ is equal to $1$, the total payment made by participant $i$ at the
$k$th occurrence time is
\[
    Z_{i,k}=\sum_{j=1}^{n} Z_{i,j,k}.
\]
Equivalently, if $J_k$ denotes the unique participant such that $I_{J_k,k}=1$, then
\[
    Z_{i,k}=a_{i,J_k}\,Y_{J_k,N_{J_k,T_k}}\text{ and }  \sum_{i=1}^{n} Z_{i,k}=X_k.
\]

For each fixed $i$, the sequence $(Z_{i,k})_{k\ge 1}$ is independent and identically
distributed. More precisely, $Z_{i,k}$ is a mixture of the scaled severities
$a_{i,1}Y_1,\ldots,a_{i,n}Y_n$, with mixing probabilities proportional to the claim
frequencies:
\begin{equation}
\label{DefJk}
    \Prob[J_k=j]=\frac{\lambda_j}{\lambda_{\bullet}}, \qquad j=1,\ldots,n.
\end{equation}
Hence, for $z\ge 0$,
\[
    F_{Z_i}(z)
    =
    \frac{1}{\lambda_{\bullet}}
    \sum_{j:\,a_{i,j}=0}\lambda_j
    +
    \frac{1}{\lambda_{\bullet}}
    \sum_{j:\,a_{i,j}>0}\lambda_j
    F_{Y_j}\!\left(\frac{z}{a_{i,j}}\right).
\]
Its mean is given by
\[
    \mu_{Z_i}
    =
    \Esp[Z_{i,1}]
    =
    \frac{1}{\lambda_{\bullet}}
    \sum_{j=1}^{n}\lambda_j a_{i,j}\textcolor{black}{b_{j}},
\]
and we denote its variance by $\sigma^2_{Z_i}=\Var[Z_{i,1}]$.

Therefore, the cumulative claim amount allocated to participant $i$ up to time $t$ after
pooling is
\[
    S_{i,t}^{\mathrm{pool}}=\sum_{k=1}^{N_t} Z_{i,k}, \qquad t\ge 0,
\]
which is a compound Poisson process with intensity $\lambda_{\bullet}$ and claim-size
distribution $F_{Z_i}$. Keeping the same contribution rate $c_i$ and initial reserve
$\kappa_i$ as in Section~\ref{sec:Loss model}, the pooled surplus process of participant $i$ is
\begin{equation}
\label{LinearRiskSharing-Section2-Equation10}
    V_{i,t}^{\mathrm{pool}}
    =
    c_i t-S_{i,t}^{\mathrm{pool}}+\kappa_i,
    \qquad t\ge 0.
\end{equation}
Whereas $S_{i,t}$ records the claims originally reported by participant $i$,
$S_{i,t}^{\mathrm{pool}}$ records the claims eventually borne by participant $i$ after the
occurrence-time redistribution rule has been applied throughout the pool.

\begin{figure}[p]
    \centering

    % Row 1
    \begin{subfigure}{0.4\textwidth}
        \centering
        \includegraphics[width=\linewidth]{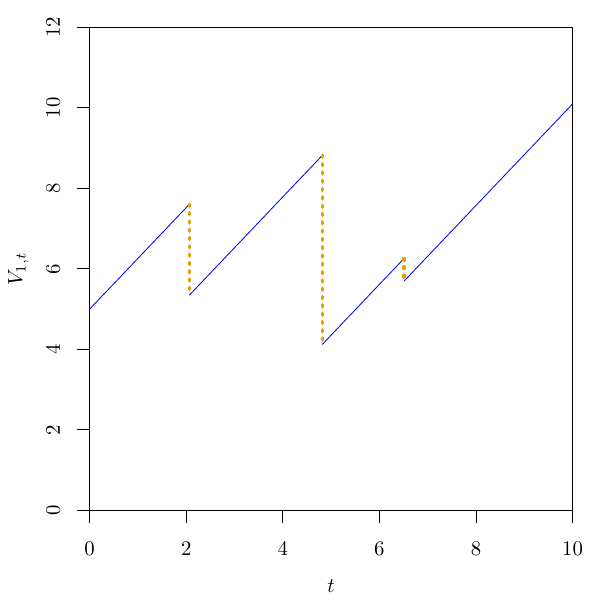}
    \end{subfigure}
    \hfill
    \begin{subfigure}{0.4\textwidth}
        \centering
        \includegraphics[width=\linewidth]{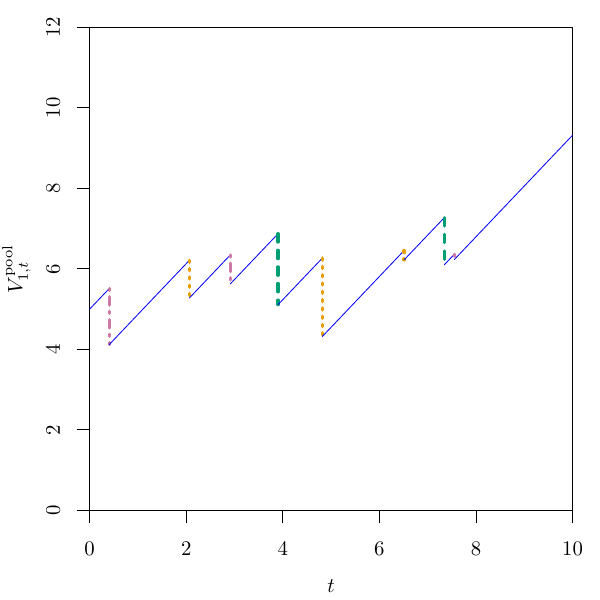}
    \end{subfigure}

    \vspace{-0.3cm}

    % Row 2
    \begin{subfigure}{0.4\textwidth}
        \centering
        \includegraphics[width=\linewidth]{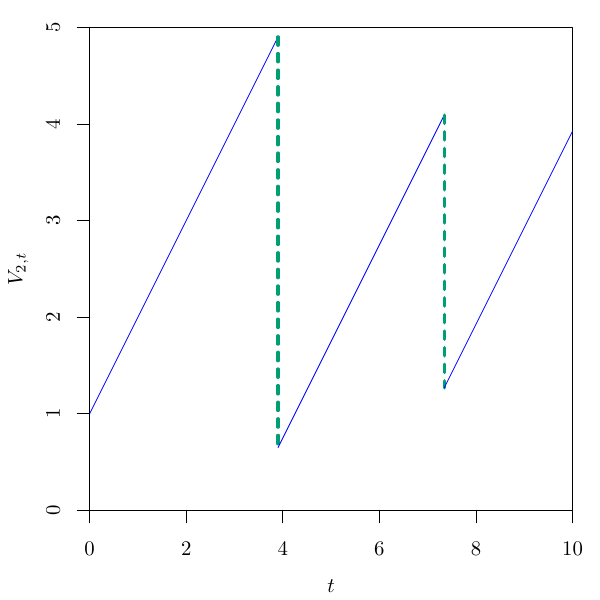}
    \end{subfigure}
    \hfill
    \begin{subfigure}{0.4\textwidth}
        \centering
        \includegraphics[width=\linewidth]{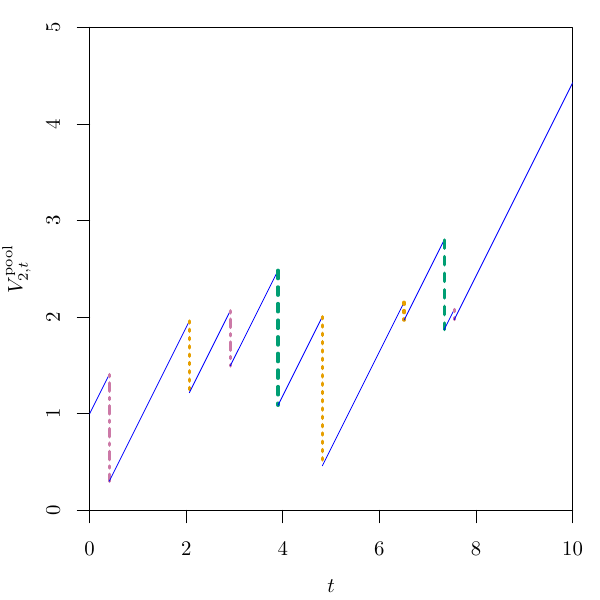}
    \end{subfigure}

    \vspace{-0.3cm}

    % Row 3
    \begin{subfigure}{0.40\textwidth}
        \centering
        \includegraphics[width=\linewidth]{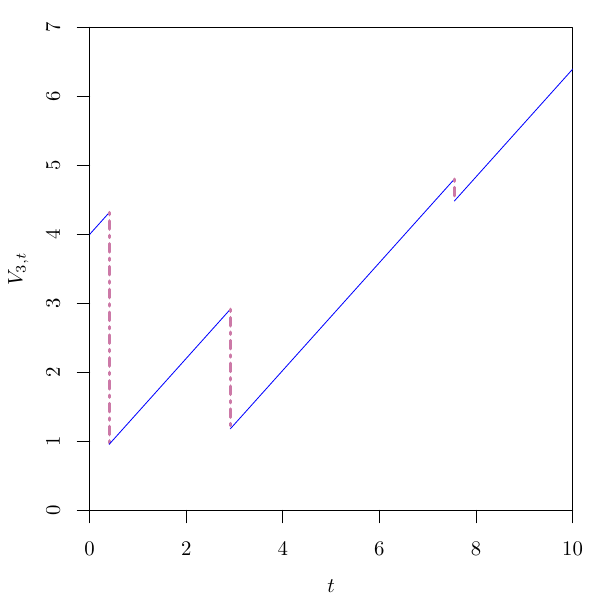}
    \end{subfigure}
    \hfill
    \begin{subfigure}{0.40\textwidth}
        \centering
        \includegraphics[width=\linewidth]{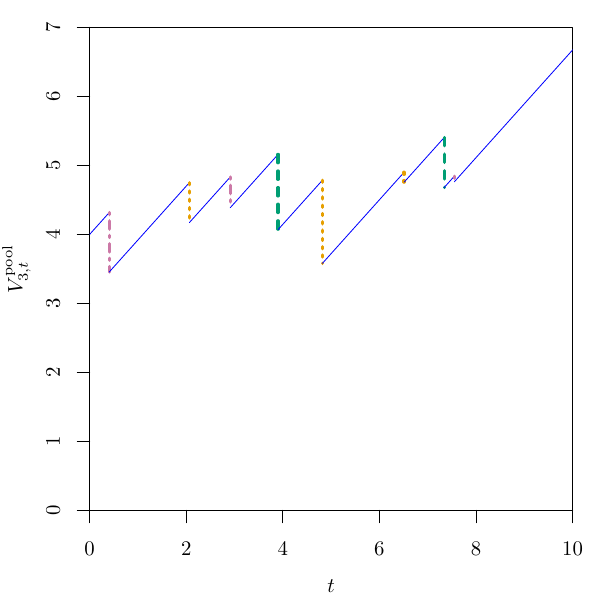}
    \end{subfigure}

    \caption{\textcolor{black}{Illustration of the individual surplus processes under the proportional risk-sharing rule introduced in Example \ref{LinearRiskSharing-Section3-ExampleMP} for a pool of three individuals with initial reserves $\kappa_{1}=5$, $\kappa_{2}=1$ and $\kappa_{3}=4$. The left and right columns correspond to the dynamics before pooling $\{V_{i,t},\hspace{2mm}t\geq 0\}$ and after pooling $\{V^{\text{pool}}_{i,t},\hspace{2mm}t\geq 0\}$, for $i = 1,2,3$, respectively. The model parameters are set to $\eta=2/5$, $\lambda_{1}=0.3$, $\lambda_{2}=0.25$, $\lambda_{3}=0.2$ and $Y_{i,k} \sim \text{LogNormal}(\mu_i,\sigma^{2}_i)$, with $\mu_{1}=0.6$, $\mu_{2}=0.55$, $\mu_{3}=0.525$ and $\sigma^{2}_{1} = \sigma^{2}_{2} = \sigma^{2}_{3}=1$. The origin of each claim is indicated by line style and color: orange dotted for participant 1, teal dashed for participant 2, and pink dot-dashed for participant 3.}
    \label{LinearRiskSharingAtOccurrenceTime-Section3-Figure1}}
\end{figure}

{\color{black}
Figure \ref{LinearRiskSharingAtOccurrenceTime-Section3-Figure1} illustrates how pooling affects the claim experience (i.e., claim arrival times and claim sizes) of an individual\rq s account surplus process. As expected, after pooling, an individual makes more frequent payments, since they are required to participate in the losses incurred by other members of the pool. However, these payments are smaller compared to those made in the stand-alone setting. Furthermore, participant $1$ makes the largest contributions, reflecting the fact that this participant has the highest expected loss per unit of time among all members of the pool.
}

\subsection{Allocation constraints and scale loss family assumption}

\subsubsection{Actuarial fairness constraint}
\label{ACtuarial fairness}

Actuarial fairness means that, for each participant, joining the pool does not change the
expected claim amount borne per unit of time. In other words, the risk-sharing rule redistributes
losses without creating systematic expected gains or losses across participants; see, e.g.,
Denuit et al.\ (2022).

Under stand-alone operation, the cumulative claim amount borne by participant $i$ up to time $t$
is $S_{i,t}$, so that
\begin{align}
    \Esp\!\left[S_{i,t}\right]
    = \Esp[N_{i,t}]\,\Esp[Y_{i,1}]
    = \lambda_i t\, b_i. %\text{ where }b_i:=\Esp[Y_{i,1}]. 
    \label{PropertiesoftheLinearRisk-SharingRule-Section3-Equation3}
\end{align}
Under the proportional sharing rule introduced in Section~\ref{sec:Proportional risk sharing at occurrence time}, the cumulative claim amount
borne by participant $i$ after pooling is $S_{i,t}^{\mathrm{pool}}=\sum_{k=1}^{N_t} Z_{i,k}$. Hence,
\begin{align}
    \Esp\!\left[S_{i,t}^{\mathrm{pool}}\right]
    = \Esp[N_t]\,\Esp[Z_{i,1}]
    = \lambda_\bullet t\,\Esp[Z_{i,1}]
    = t\sum_{j=1}^n \lambda_j a_{i,j} b_j .
    \label{PropertiesoftheLinearRisk-SharingRule-Section3-Equation4}
\end{align}

This leads to the following assumption.

\begin{assumption}[Actuarial fairness]
\label{LinearRiskSharing-Section3-AssumptionAF}
The proportional risk-sharing rule with allocation matrix $\Avec$ is said to be actuarially fair if,
for every participant $i=1,2,\ldots,n$,
\begin{equation}
    \lambda_i b_i = \sum_{j=1}^n \lambda_j a_{i,j} b_j \Leftrightarrow
   \lambda_i b_i(1-a_{i,i})
    =
    \sum_{j\ne i}\lambda_j a_{i,j} b_j .
    \label{PropertiesoftheLinearRisk-SharingRule-Section3-Equation5}
\end{equation}
\end{assumption}

Condition \eqref{PropertiesoftheLinearRisk-SharingRule-Section3-Equation5} states that the
expected loss per unit of time borne by participant $i$ inside the pool coincides with the expected
loss per unit of time borne under stand-alone operation. The equivalent formulation highlights the
balance between the expected loss ceded by participant $i$ to the rest of the pool, namely
$\lambda_i b_i(1-a_{i,i})$, and the expected loss assumed by participant $i$ from the claims of
the other participants, namely $\sum_{j\ne i}\lambda_j a_{i,j} b_j$.
When claim frequencies are homogeneous, that is, $\lambda_1=\cdots=\lambda_n$, Assumption
\ref{LinearRiskSharing-Section3-AssumptionAF} reduces to
$\sum_{j=1}^n a_{i,j} b_j = b_i$, $i=1,\ldots,n$.

We now present two benchmark examples of proportional sharing rules. The first one
satisfies both the full-allocation property and Assumption
\ref{LinearRiskSharing-Section3-AssumptionAF} by construction. The second one always
satisfies full allocation, but is actuarially fair only under an additional symmetry condition.

\begin{example}[Mean-proportional rule]
\label{LinearRiskSharing-Section3-ExampleMP}
Consider the \emph{mean-proportional} (MP) sharing rule defined by
\begin{equation}
    a_{i,j}^{\mathrm{MP}} = a_i^{\mathrm{MP}}
    := \frac{\lambda_i b_i}{\sum_{k=1}^n \lambda_k b_k},
    \qquad i,j=1,\ldots,n.
    \label{PropertiesoftheLinearRisk-SharingRule-Section3-Equation6}
\end{equation}
Under this rule, participant $i$ bears the same fixed fraction $a_i^{\mathrm{MP}}$ of every
claim reported in the pool, independently of its origin.

Since the coefficient does not depend on $j$, each column sum is equal to
\[
    \sum_{i=1}^n a_{i,j}^{\mathrm{MP}}
    =
    \sum_{i=1}^n \frac{\lambda_i b_i}{\sum_{k=1}^n \lambda_k b_k}
    =1,
\]
so the full-allocation property holds.
Moreover, for every participant $i$,
\[
    \sum_{j=1}^n \lambda_j a_{i,j}^{\mathrm{MP}} b_j
    =
    \sum_{j=1}^n \lambda_j
    \frac{\lambda_i b_i}{\sum_{k=1}^n \lambda_k b_k}
    b_j
    =
    \frac{\lambda_i b_i}{\sum_{k=1}^n \lambda_k b_k}
    \sum_{j=1}^n \lambda_j b_j
    =
    \lambda_i b_i.
\]
Hence Assumption \ref{LinearRiskSharing-Section3-AssumptionAF} is satisfied. Therefore,
the mean-proportional rule defines an actuarially fair proportional sharing scheme.
\end{example}

\begin{example}[Uniform-proportional rule]
\label{LinearRiskSharing-Section3-ExampleUP}
Consider now the \emph{uniform-proportional} (UP) sharing rule defined by
\begin{equation}
    a_{i,j}^{\mathrm{UP}}=\frac{1}{n},
    \qquad i,j=1,\ldots,n.
    \label{PropertiesoftheLinearRisk-SharingRule-Section3-Equation7}
\end{equation}
Under this rule, every claim is shared equally among the $n$ participants, regardless of the
identity of the claimant.

The full-allocation property is immediate.
However, actuarial fairness does not hold in general. Indeed, Assumption
\ref{LinearRiskSharing-Section3-AssumptionAF} becomes
\[
    \lambda_i b_i
    =
    \sum_{j=1}^n \lambda_j \frac{1}{n} b_j
    =
    \frac{1}{n}\sum_{j=1}^n \lambda_j b_j,
    \qquad i=1,\ldots,n.
\]
Therefore, the uniform-proportional rule is actuarially fair if, and only if,
\begin{equation}
    \lambda_1 b_1=\lambda_2 b_2=\cdots=\lambda_n b_n.
    \label{PropertiesoftheLinearRisk-SharingRule-Section3-Equation8}
\end{equation}
In other words, equal sharing is actuarially fair precisely when all participants have the same
expected claim amount per unit of time.

As a particular case, if claim frequencies are homogeneous, that is,
$\lambda_1=\cdots=\lambda_n$, then \eqref{PropertiesoftheLinearRisk-SharingRule-Section3-Equation8}
reduces to $b_1=b_2=\cdots=b_n$. 
Hence, under homogeneous claim frequencies, the uniform-proportional rule is actuarially
fair if and only if the participants have identical mean claim severities.
\end{example}

\subsubsection{Capacity constraint}

Actuarial fairness controls the average loss borne by each participant per unit of time after pooling,
but it does not by itself prevent the allocation matrix from assigning to participant $i$ an
excessively large share of claims generated by a participant with a much larger severity level.
To rule out such disproportional transfers, we impose the following bilateral constraint.

\begin{assumption}[Capacity constraint]
\label{LinearRiskSharing-Section3-AssumptionCC}
The proportional risk-sharing rule with allocation matrix $\Avec$ is said to satisfy the
capacity constraint if, for every pair $(i,j)\in\{1,\ldots,n\}^2$,
\begin{equation}
    a_{i,j} b_j \le b_i .
    \label{PropertiesoftheLinearRisk-SharingRule-Section3-Equation9}
\end{equation}

\end{assumption}

Assumption \ref{LinearRiskSharing-Section3-AssumptionCC} means that the expected payment
borne by participant $i$ when a claim from participant $j$ occurs, namely
$a_{i,j}\Esp[Y_{j,1}]=a_{i,j}b_j$, cannot exceed participant $i$'s own mean claim size $b_i$.
Economically, the rule prevents a participant from being asked to absorb, claim by claim,
an exposure that is larger than his or her own typical loss level. Mathematically, this
bilateral bound will be used in the proof of the ruin-probability comparison to control the
dispersion of transferred claims.

%It is worth noting that Assumption
%\ref{LinearRiskSharing-Section3-AssumptionCC} is stronger than actuarial fairness. The
%latter balances expected losses only after aggregation over all claim origins $j$, whereas the
%capacity constraint must hold separately for each bilateral transfer. However, when claim
%frequencies are homogeneous, that is, 

\textcolor{black}{It is worth noting that Assumption \ref{LinearRiskSharing-Section3-AssumptionCC} is more restrictive with respect to the bilateral transfers done by each member of the pool than actuarial fairness. Specifically, the latter balances expected losses only after aggregation over all claim origins $j$, whereas the capacity constraint must hold separately for each bilateral transfer. In general, neither assumption implies the other; however, when claim frequencies are homogeneous, that is,} $\lambda_1=\cdots=\lambda_n$, Assumption
\ref{LinearRiskSharing-Section3-AssumptionAF} implies Assumption
\ref{LinearRiskSharing-Section3-AssumptionCC}. Indeed, in that case actuarial fairness reads
\[
    \sum_{j=1}^n a_{i,j} b_j=b_i,
    \qquad i=1,\ldots,n,
\]
and since all summands are non-negative, one necessarily has $a_{i,j}b_j\le b_i$ for every
$j$.

Assumption \ref{LinearRiskSharing-Section3-AssumptionCC} also provides some intuition on
the admissible heterogeneity of the pool. Summing
\eqref{PropertiesoftheLinearRisk-SharingRule-Section3-Equation9} over $i$ and using the
full-allocation property \eqref{PropertiesoftheLinearRisk-SharingRule-Section3-Equation2},
we obtain
\[
    b_j
    =\sum_{i=1}^n a_{i,j}b_j
    \le \sum_{i=1}^n b_i,
    \qquad j=1,\ldots,n.
\]
Thus, the capacity constraint is compatible with heterogeneous pools, but it prevents
participants with small mean severities from absorbing too large a fraction of claims generated
by participants with much larger mean severities.

For the benchmark rules introduced in Section~\ref{ACtuarial fairness}, the capacity constraint takes a simple
form.

\paragraph{Mean-proportional rule.}
Under the mean-proportional rule of Example \ref{LinearRiskSharing-Section3-ExampleMP},
substituting $a_{i,j}^{\mathrm{MP}}=a_i^{\mathrm{MP}}$ into \eqref{PropertiesoftheLinearRisk-SharingRule-Section3-Equation9} gives
\[
    a_{i}^{\mathrm{MP}} b_j
    =
    \frac{\lambda_i b_i b_j}{\sum_{k=1}^n \lambda_k b_k}
    \le b_i
    \Leftrightarrow
    \lambda_i b_j \le \sum_{k=1}^n \lambda_k b_k .
\]
Hence, Assumption \ref{LinearRiskSharing-Section3-AssumptionCC} holds for the
mean-proportional rule if, and only if,
\begin{equation}
    \lambda_i b_j \le \sum_{k=1}^n \lambda_k b_k,
    \hspace{2mm} i,j=1,\ldots,n\Leftrightarrow     \lambda_i \max_{1\le j\le n} b_j
    \le
    \sum_{k=1}^n \lambda_k b_k,
    \hspace{2mm} i=1,\ldots,n.
    \label{PropertiesoftheLinearRisk-SharingRule-Section3-Equation10}
\end{equation}
In particular, when claim frequencies are homogeneous, the mean-proportional rule
automatically satisfies the capacity constraint, because $ b_j \le \sum_{k=1}^n b_k$ obviously holds true for
$j=1,\ldots,n$.

\paragraph{Uniform-proportional rule.}
Under the uniform-proportional rule of Example
\ref{LinearRiskSharing-Section3-ExampleUP},
the capacity constraint becomes
\begin{equation}
    \frac{1}{n} b_j \le b_i,
    \hspace{2mm} i,j=1,\ldots,n\Leftrightarrow    b_j \le n\, b_i,
    \hspace{2mm} i,j=1,\ldots,n.
    \label{PropertiesoftheLinearRisk-SharingRule-Section3-Equation11}
\end{equation}
Therefore, the uniform-proportional rule satisfies Assumption
\ref{LinearRiskSharing-Section3-AssumptionCC} if, and only if, the mean severities are not
too dispersed across participants. In particular, if the uniform-proportional rule is
actuarially fair under homogeneous claim frequencies, then
$b_1=\cdots=b_n$, and the capacity constraint is automatically satisfied.

\subsubsection{Scale loss family}
\label{Scale loss family constraint}

We now impose a structural assumption on claim severities. It states that the heterogeneity
across participants operates through scale parameters only, while the normalized severities
share a common distribution.

\begin{assumption}[Scale loss family]
\label{LinearRiskSharing-Section3-AssumptionSF}
There exists a non-negative random variable $W$ such that $\Esp[W]=1$ and, for every
$i=1,\ldots,n$ and every $k\ge 1$,
\begin{equation}
    Y_{i,k} \eqD b_i W,
    \qquad b_i>0.
    \label{PropertiesoftheLinearRisk-SharingRule-Section3-Equation12}
\end{equation}
Equivalently, the normalized severities satisfy $ \frac{Y_{i,k}}{b_i} \eqD W$ for $i=1,\ldots,n$ and $k\ge 1$.
\end{assumption}

Assumption \ref{LinearRiskSharing-Section3-AssumptionSF} means that all participants have
the same severity shape after normalization by their mean scale $b_i=\Esp[Y_{i,1}]$.
Thus, differences across participants are captured only by multiplicative scale effects.
This assumption is restrictive, but it provides a transparent framework in which the
dispersion of pooled and stand-alone losses can be compared on a common basis.

Under Assumption \ref{LinearRiskSharing-Section3-AssumptionSF}, the payment made by
participant $i$ at a claim occurrence time remains a scaled version of the same benchmark
random variable $W$. Indeed, when the triggering claim comes from participant $j$, the
payment borne by participant $i$ is
\[
    a_{i,j}Y_{j,1}\eqD a_{i,j} b_j W.
\]
Hence, after normalization by $b_i$, the pooled payment borne by participant $i$ is a
mixture of the random variables $ \frac{a_{i,j}b_j}{b_i}\,W$, $j=1,\ldots,n$.
In particular, under Assumption
\ref{LinearRiskSharing-Section3-AssumptionCC}, all these scaling coefficients satisfy
\begin{align}
    0\le \frac{a_{i,j}b_j}{b_i}\le 1,
    \label{Const_b}
\end{align}
which will be crucial in the proof of the main ruin-probability comparison.

We conclude this subsection with two standard examples.

\begin{example}[Gamma scale family]
\label{LinearRiskSharing-Section3-ExampleGamma}
Let $W$ follow a Gamma distribution with shape parameter $\alpha>0$ and rate parameter
$\alpha$, that is, $W \sim \mathrm{Gamma}(\alpha,\alpha)$, so that
$\Esp[W]=1$ and $\Var[W]=\frac{1}{\alpha}$.
Then, Assumption \ref{LinearRiskSharing-Section3-AssumptionSF} holds if, and only if, for
each participant $i$,
\[
    Y_{i,1}\sim \mathrm{Gamma}\!\left(\alpha,\frac{\alpha}{b_i}\right),
\]
in the shape-rate parametrization. 
Thus, Gamma severities belong to a common scale family precisely when the shape parameter
is common across participants, while the scale parameter is proportional to $b_i$.
\end{example}

\begin{example}[LogNormal scale family]
\label{LinearRiskSharing-Section3-ExampleLognormal}
Let $W$ follow a LogNormal distribution with parameters
\[
    W \sim \mathrm{LogNormal}\!\left(-\frac{\sigma^2}{2},\sigma^2\right),
    \qquad \sigma^2>0,
\]
so that $\Esp[W]=1$. Then Assumption
\ref{LinearRiskSharing-Section3-AssumptionSF} holds if, and only if, for each participant
$i$,
\[
    Y_{i,1}\sim \mathrm{LogNormal}\!\left(\log b_i-\frac{\sigma^2}{2},\sigma^2\right).
\]
Equivalently, a collection of LogNormally-distributed severities $ Y_{i,1}\sim \mathrm{LogNormal}(\mu_i,\sigma_i^2)$,
$i=1,\ldots,n$, belongs to a common scale family if, and only if, the variance parameter is the same for all
participants, that is,
\[
    \sigma_1^2=\cdots=\sigma_n^2=:\sigma^2,
\]
and the location parameters satisfy
\[
    \mu_i=\log b_i-\frac{\sigma^2}{2},
    \qquad i=1,\ldots,n.
\]
Hence, within the LogNormal family, a common scale family is obtained by keeping the
shape parameter $\sigma^2$ fixed and allowing only location shifts across participants.
\end{example}

\subsection{Main result on ruin-reducing effect of pooling}

Define the infinite-time ruin probabilities as
\begin{equation}
\begin{aligned}
    \textcolor{black}{\psi\left(\kappa\right) = \Prob\left[\inf_{t \geq 0} V_{t} < 0 \mid V_{0} = \kappa\right],}& \qquad
    \psi_{i}\left(\kappa_{i}\right) = \Prob\left[\inf_{t \geq 0} V_{i,t} < 0 \mid V_{i, 0} = \kappa_{i}\right], \\
\text{ and } \qquad
\psi_{i}^{\mathrm{pool}}\left(\kappa_{i}\right) &= \Prob\left[\inf_{t \geq 0} V_{i,t}^{\mathrm{pool}} < 0 \mid V_{i,0}^{\mathrm{pool}} = \kappa_{i}\right].
    \label{ImpactofPooling-Section4-Equation1}
\end{aligned}
\end{equation}
\textcolor{black}{The quantity $\psi(\kappa)$ is the probability that the pooled fund will ever become negative, whereas $\psi_i(\kappa_i)$ is the probability that participant $i$ is eventually ruined when operating on a stand-alone basis, and $\psi_i^{\mathrm{pool}}(\kappa_i)$ is the corresponding
probability after joining the pool.} Assumption
\ref{LinearRiskSharing-Section3-AssumptionAF} implies that the pooled model of participant $i$
has the same expected claim amount per unit of time as the stand-alone model, namely
$\lambda_i b_i$. Since $c_i=(1+\eta)\lambda_i b_i$ by \eqref{LinearRiskSharing-Section2-Equation3}, the net profit condition holds for both $V_{i,t}$ and $V_{i,t}^{\mathrm{pool}}$, and thus $ \psi_i(\kappa_i)<1$ and $ \psi_i^{\mathrm{pool}}(\kappa_i)<1$
for all $\kappa_i\ge 0$.

\textcolor{black}{It is important to distinguish between the solvency of the pooled fund and the solvency of the individual accounts after risk sharing. Under the full-allocation condition, the allocation matrix $\Avec$ does not affect the aggregate surplus process of the pooled fund. Indeed, at each claim occurrence time one has $\sum_{i=1}^n Z_{i,k}=X_k$, and therefore $\sum_{i=1}^n S^{\mathrm{pool}}_{i,t}=S_t$ for all $t\geq 0$. Consequently, for a given aggregate contribution rate $c$ and aggregate initial reserve $\kappa$, the pooled fund ruin probability $\psi(\kappa)$, defined in \eqref{ImpactofPooling-Section4-Equation1}, is invariant with respect to the choice of $\Avec$. The role of the sharing matrix is instead to redistribute the aggregate claim stream among the individual accounts, thereby affecting the individual ruin probabilities $\psi_i^{\mathrm{pool}}(\kappa_i)$, also defined in \eqref{ImpactofPooling-Section4-Equation1}. Thus, the financial health of the pooled fund is governed by the aggregate reserve $\kappa=\sum_{i=1}^n\kappa_i$, whereas individual solvency depends both on the individual reserve $\kappa_i$ and on the allocation rule.}

\textcolor{black}{An alternative avenue is to study finite-time ruin probabilities, defined as the probability of ruin occurring within a finite time horizon. In this paper, however, we focus on infinite-time ruin probabilities. This choice is motivated by their greater mathematical tractability compared to their finite-time counterparts, while still providing an informative measure of each participant\rq s solvency. Moreover, in the context of risk-sharing, Denuit and Robert (2023) show that, when a participant has zero initial reserve, its finite-time ruin probability is higher under pooling than in the corresponding stand-alone setting (see Proposition 3.4 in that paper), thus demonstrating that pooling is detrimental. Their numerical illustrations suggest that this adverse effect of pooling on finite-time ruin probabilities also persists for small positive initial reserves. Nevertheless, it diminishes as the time horizon increases, eventually vanishing. This is rather intuitive: pooling makes participants more likely to contribute shortly after joining the pool. Hence, participants with low initial reserves are more likely to experience ruin under pooling than in the corresponding stand-alone setting, even though their contributions are expected to be smaller after pooling takes place. In general, finite-time ruin probabilities under the stand-alone and pooling settings cannot be directly compared for initial reserves greater than zero. Therefore, following Denuit and Robert (2023), our work seeks to determine whether the risk-sharing mechanism introduced in Section \ref{sec:Proportional risk sharing at occurrence time} reduces participants\rq \ infinite-time ruin probabilities when pooling, compared with the corresponding stand-alone setting.}

To compare ruin probabilities, we use the convex order $\lcx$. Recall that for two integrable
random variables $W_1$ and $W_2$, one writes $W_1 \lcx W_2$ if
$ \Esp[W_1]=\Esp[W_2]$ and $ \Esp[(W_1-t)_+] \le \Esp[(W_2-t)_+]$ for all $t\in\mathbb R$.
Equivalently, $W_1 \lcx W_2$ if, and only if, $ \Esp[g(W_1)]\le \Esp[g(W_2)]$
for every convex function $g$ for which the expectations exist. In actuarial terms, this means
that $W_2$ is more variable, or more dispersed, than $W_1$ while having the same mean; see,
for example, Denuit et al.\ (2005).

We are now in a position to state the main result of this section. \textcolor{black}{The proof of Proposition \ref{ImpactofPooling-Section4-Proposition1} is given in Appendix \ref{sec:Proof of Proposition 3.8}.}

\begin{proposition}
\label{ImpactofPooling-Section4-Proposition1}
Suppose that the proportional risk-sharing rule satisfies Assumptions
\ref{LinearRiskSharing-Section3-AssumptionAF},
\ref{LinearRiskSharing-Section3-AssumptionCC}, and
\ref{LinearRiskSharing-Section3-AssumptionSF}. Then, $\psi_i^{\mathrm{pool}}(\kappa_i)\le \psi_i(\kappa_i)$ for every participant
$i=1,\ldots,n$ and all $\kappa_i\ge 0$.
\end{proposition}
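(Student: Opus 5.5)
The plan is to put the stand-alone and pooled surplus processes of participant $i$ on a common Poisson clock, compare the claim sizes in convex order, and then pass from the convex-order comparison to a comparison of infinite-time ruin probabilities through the Pollaczek--Khinchine formula.

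\emph{Step 1: a common clock.} By the thinning property, the stand-alone process $\{S_{i,t},\,t\ge 0\}$ has the same law as a compound Poisson process with intensity $\lambda_\bullet$ and claim sizes $Z_{i,k}'=B_k\,Y_{i,k}$. Here the $B_k$ are independent Bernoulli variables with $\Prob[B_k=1]=\lambda_i/\lambda_\bullet$, independent of the claim sizes. In this representation a claim of participant $i$ is a pool event with $J_k=i$, and any other pool event yields a zero claim. Hence $V_{i,t}$ and $V_{i,t}^{\mathrm{pool}}$ have the same premium rate $c_i$, the same initial reserve $\kappa_i$ and the same claim intensity $\lambda_\bullet$. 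They differ only through the claim-size distributions $F_{Z_i'}$ and $F_{Z_i}$.

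\emph{Step 2: convex order of the claim sizes.} By Assumption \ref{LinearRiskSharing-Section3-AssumptionSF}, $Z_{i,1}\eqD b_i\,\Theta\,W$ with $\Theta=a_{i,J_1}b_{J_1}/b_i$ and $W$ independent of $J_1$. Similarly, $Z_{i,1}'\eqD b_i\,B\,W$ with $B$ independent of $W$. By \eqref{Const_b}, which rests on Assumption \ref{LinearRiskSharing-Section3-AssumptionCC}, $\Theta$ takes values in $[0,1]$. By Assumption \ref{LinearRiskSharing-Section3-AssumptionAF},
\[
\Esp[\Theta]=\frac{1}{\lambda_\bullet b_i}\sum_{j=1}^n\lambda_j a_{i,j}b_j=\frac{\lambda_i}{\lambda_\bullet}=\Esp[B].
\]
A Bernoulli variable dominates in convex order every $[0,1]$-valued variable with the same mean. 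This follows from convexity: $g(\theta)\le (1-\theta)g(0)+\theta g(1)$ on $[0,1]$, and taking expectations gives $\Esp[g(\Theta)]\le\Esp[g(B)]$. Hence $\Theta\lcx B$.

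Next, fix $w\ge 0$. For each such $w$ we get $w\Theta\lcx wB$, since $x\mapsto g(wx)$ is convex whenever $g$ is. Integrating over the law of $W$, using independence, gives $\Theta W\lcx BW$, and therefore $Z_{i,1}\lcx Z_{i,1}'$.

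\emph{Step 3: from claim sizes to ruin probabilities.} Both models share the loading ratio $\rho=\lambda_\bullet\Esp[Z_{i,1}]/c_i=1/(1+\eta)<1$. The Pollaczek--Khinchine formula therefore gives
\[
\psi(u)=1-(1-\rho)\sum_{m\ge 0}\rho^m G^{*m}(u),
\]
where $G$ is the integrated-tail (ladder height) distribution $\bar G(x)=\Esp[(Z-x)_+]/\Esp[Z]$ of the respective claim size $Z$. Since the two claim sizes have equal means, the convex order $Z_{i,1}\lcx Z_{i,1}'$ means exactly that the integrated tails satisfy $\bar G^{\mathrm{pool}}\le\bar G^{\mathrm{sa}}$ pointwise. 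In other words, the pooled ladder height is stochastically smaller than the stand-alone one. The usual stochastic order is closed under convolution, so $G^{\mathrm{pool}*m}\ge G^{\mathrm{sa}*m}$ for every $m$. Plugging this into the formula yields $\psi_i^{\mathrm{pool}}(\kappa_i)\le\psi_i(\kappa_i)$ for all $\kappa_i\ge 0$. This is the classical result that convexly smaller claims reduce ruin probabilities (Denuit et al., 2005).

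The only genuinely problem-specific step is Step 2. In particular, Steps 1 and 2 together need the thinning representation to align means exactly, and this is where the three assumptions enter. Step 2 is not hard once the scale structure $Z=b_i\Theta W$ is identified, so I expect the main care to lie in checking the independence of $\Theta$ and $W$ and the mean identity there. Step 3 is standard.
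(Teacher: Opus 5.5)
Your proposal is correct and follows essentially the same route as the paper. It uses the same thinning onto the pool clock of intensity $\lambda_\bullet$, with the capacity constraint and actuarial fairness entering at the same points. It also uses the same convexity interpolation between $0$ and $b_iW$: your factorization $\Theta\lcx B$ followed by mixing over $W$ repackages the paper's bound $\Esp[g(a_{i,j}b_jW)]\le(1-a_{i,j}b_j/b_i)g(0)+(a_{i,j}b_j/b_i)\Esp[g(b_iW)]$, summed with weights $\lambda_j/\lambda_\bullet$. The only difference is that you prove the final ruin comparison through the Pollaczek--Khinchine formula and the stochastic ordering of the integrated tails, whereas the paper cites this classical fact (Proposition 8.2 in Asmussen and Albrecher, 2010).
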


Proposition \ref{ImpactofPooling-Section4-Proposition1} gives a clear actuarial interpretation of pooling under the proportional rule. Under Assumptions \ref{LinearRiskSharing-Section3-AssumptionAF}, \ref{LinearRiskSharing-Section3-AssumptionCC}, and \ref{LinearRiskSharing-Section3-AssumptionSF}, joining the pool does not improve solvency through hidden subsidies or through a reduction in the expected loss rate borne by a participant. Instead, the gain comes from a genuine risk-pooling effect: losses are redistributed at occurrence time in such a way that each participant faces a claim stream that is less dispersed than in stand-alone operation, while preserving the same expected claim amount per unit of time. In ruin-theoretic terms, the pool therefore acts as a volatility-smoothing mechanism. Proposition \ref{ImpactofPooling-Section4-Proposition1} shows that, within this framework, a sufficiently well-designed linear sharing rule transforms collective solidarity into an individual solvency benefit, since every participant weakly reduces his or her infinite-time ruin probability by entering the pool.

\subsection{Numerical illustrations} 

The purpose of the numerical illustrations below is to complement Proposition
\ref{ImpactofPooling-Section4-Proposition1} by showing, on concrete examples, how the
risk-reducing effect of pooling appears in terms of infinite-time ruin probabilities. In both
subsections, the parameters are chosen so that the proportional sharing rules under
consideration satisfy the full-allocation property together with Assumptions
\ref{LinearRiskSharing-Section3-AssumptionAF},
\ref{LinearRiskSharing-Section3-AssumptionCC}, and
\ref{LinearRiskSharing-Section3-AssumptionSF}. Hence, the assumptions of Proposition
\ref{ImpactofPooling-Section4-Proposition1} are satisfied. The numerical results therefore
provide explicit illustrations of the theoretical conclusion that, under these conditions,
pooling weakly decreases the ruin probability of every participant.

\subsubsection{Exponentially-distributed losses}
\label{NumericalIllustrations-Section5-Example1}

Even if the choice of the Exponential distribution for modeling losses is questionable in practice,
it remains useful because analytical expressions are available for infinite-time ruin probabilities.
This provides the actuary with a better understanding of the risk-sharing mechanism so that we begin with this case.

Consider a pool of three participants ($n = 3$). For each individual $i = 1,2,3$, the claim amounts $Y_{i,1}, Y_{i,2}, \ldots$ are independent and identically distributed according to the Exponential distribution with rate parameter $\alpha_{i} > 0$, that is,
$ F_{Y_i}(y) = 1 - e^{-\alpha_i y}$, $y \ge 0$.
We consider the following parameter values:
\begin{itemize}
    \item[(i)] Claim frequency: $\lambda_1 = 2$, $\lambda_2 = 1$, and $\lambda_3 = 3$,
    \item[(ii)] Claim severity: $\alpha_1 = \tfrac{1}{2}$, $\alpha_2 = 2$, and $\alpha_3 = 1$.
\end{itemize}
The corresponding mean claim sizes are $ \textcolor{black}{b_{1}} = \tfrac{1}{\alpha_1} = 2$,
$ \textcolor{black}{b_{2}} = \tfrac{1}{\alpha_2} = \tfrac{1}{2}$, and $ \textcolor{black}{b_{3}} = \tfrac{1}{\alpha_3} = 1$.
Additionally, we assume a safety loading factor of $\eta = \tfrac{2}{5}$. Under these conditions, it is clear that the net profit condition \eqref{LinearRiskSharing-Section2-Equation3} is satisfied for the three participants in the pool. Hence, it is also satisfied for the pooled individual risk account for each participant $i$.

Under this setting, the probability density function of the random variables $Z_{i,k}$, for $i = 1,2,3$, is given by
\begin{align}
    f_{Z_i}(z)
    = \frac{1}{\lambda_{\bullet}}\sum_{j=1}^{3}\frac{\lambda_j}{a_{i,j}}
    f_{Y_j}\!\left(\frac{z}{a_{i,j}}\right)
    = p_{1} \, \beta_{i, 1} e^{-\beta_{i, 1} z} + p_{2} \, \beta_{i, 2} e^{-\beta_{i, 2} z} + p_{3} \, \beta_{i, 3} e^{-\beta_{i, 3} z}, \quad z \geq 0,
    \label{NumericalIllustrations-Section5-Equation3}
\end{align}
where $p_j := \frac{\lambda_j}{\lambda_{\bullet}}$ and $\beta_{i,j} := \frac{\alpha_j}{a_{i,j}}$. Thus, $Z_{i,k}$ follows a mixture of Exponential distributions with rate parameters $\beta_{i,j}$ and mixing probabilities $p_j$. In particular, $p_j$ represents the probability that $Z_{i,k}$ is Exponentially distributed with rate $\beta_{i,j}$. 

In this example, we consider the mean-proportional (MP) risk-sharing scheme introduced in Example \ref{LinearRiskSharing-Section3-ExampleMP} with allocation matrix $\Avec^{\mathrm{MP}}$ given by
\begin{align*}
\Avec^{\mathrm{MP}}= \begin{pmatrix}
    \frac{\frac{\lambda_{1}}{\alpha_{1}}}{\frac{\lambda_{1}}{\alpha_{1}} + \frac{\lambda_{2}}{\alpha_{2}} + \frac{\lambda_{3}}{\alpha_{3}}} & \frac{\frac{\lambda_{1}}{\alpha_{1}}}{\frac{\lambda_{1}}{\alpha_{1}} + \frac{\lambda_{2}}{\alpha_{2}} + \frac{\lambda_{3}}{\alpha_{3}}} & \frac{\frac{\lambda_{1}}{\alpha_{1}}}{\frac{\lambda_{1}}{\alpha_{1}} + \frac{\lambda_{2}}{\alpha_{2}} + \frac{\lambda_{3}}{\alpha_{3}}} \\ \\
    \frac{\frac{\lambda_{2}}{\alpha_{2}}}{\frac{\lambda_{1}}{\alpha_{1}} + \frac{\lambda_{2}}{\alpha_{2}} + \frac{\lambda_{3}}{\alpha_{3}}} & \frac{\frac{\lambda_{2}}{\alpha_{2}}}{\frac{\lambda_{1}}{\alpha_{1}} + \frac{\lambda_{2}}{\alpha_{2}} + \frac{\lambda_{3}}{\alpha_{3}}} & \frac{\frac{\lambda_{2}}{\alpha_{2}}}{\frac{\lambda_{1}}{\alpha_{1}} + \frac{\lambda_{2}}{\alpha_{2}} + \frac{\lambda_{3}}{\alpha_{3}}} \\ \\
    \frac{\frac{\lambda_{3}}{\alpha_{3}}}{\frac{\lambda_{1}}{\alpha_{1}} + \frac{\lambda_{2}}{\alpha_{2}} + \frac{\lambda_{3}}{\alpha_{3}}} & \frac{\frac{\lambda_{3}}{\alpha_{3}}}{\frac{\lambda_{1}}{\alpha_{1}} + \frac{\lambda_{2}}{\alpha_{2}} + \frac{\lambda_{3}}{\alpha_{3}}} & \frac{\frac{\lambda_{3}}{\alpha_{3}}}{\frac{\lambda_{1}}{\alpha_{1}} + \frac{\lambda_{2}}{\alpha_{2}} + \frac{\lambda_{3}}{\alpha_{3}}}
    \end{pmatrix} = \begin{pmatrix}
    \frac{8}{15} & \frac{8}{15} & \frac{8}{15} \\ \\
    \frac{1}{15} & \frac{1}{15} & \frac{1}{15} \\ \\
    \frac{2}{5} & \frac{2}{5} & \frac{2}{5}
    \end{pmatrix}.
    \label{NumericalIllustrations-Section5-Equation4}
\end{align*}
%The corresponding $\beta_{i,j}$ are then given by
%$$
% \left(\beta_{i,j}^{\mathrm{MP}}\right) = \begin{pmatrix}
%    \frac{15}{16} & \frac{15}{4} & \frac{15}{8} \\ \\
%    \frac{15}{2} & 30 & 15 \\ \\
%    \frac{5}{4} & 5 & \frac{5}{2}
%    \end{pmatrix}.
%$$

We also consider an alternative (ALT) risk-sharing with allocation matrix $\Avec^{\mathrm{ALT}}$ constructed as
follows. The procedure begins by choosing how much each participant retains from his or her own loss, that is, by specifying the diagonal elements $a_{i,i}^{\mathrm{ALT}}$. These choices must be made with care, as they are required to satisfy the conditions stated in Proposition~\ref{ImpactofPooling-Section4-Proposition1}.
For example, in our setting, participant $2$ is expected to experience the smallest claims, as his or her mean loss $\textcolor{black}{b_{2}}$ is the lowest among the 3 participants. Therefore, it would be unreasonable to require this participant to bear a large share of losses originating from participants $1$ and $3$, as doing so would likely violate the actuarial-fairness condition. In other words, the proportions $a_{2,1}^{\mathrm{ALT}}$ and $a_{2,3}^{\mathrm{ALT}}$ should be relatively small in order to maintain actuarial fairness for participant $2$.
Accordingly, we select the following retention fractions: $a_{1,1}^{\mathrm{ALT}}= 0.800$, $a_{2,2}^{\mathrm{ALT}}= 0.400$, and $a_{3,3}^{\mathrm{ALT}} = 0.700$.

After having selected the diagonal retention levels for each participant, we specify only one additional “free” sharing fraction, namely an off-diagonal element of the matrix $\Avec^{\mathrm{ALT}}$. In this case, we assign participant $1$ (who is expected to incur the largest losses, since $\textcolor{black}{b_{1}}$ is the highest among all participants) to take on $10\%$ of participant $2$ risk, that is, we set $a_{1,2}^{\mathrm{ALT}} = 0.100$.
Now, the full-allocation property (or column-stochasticity of the matrix $\Avec^{\mathrm{ALT}}$) and the actuarial fairness condition do the rest of the work. For $a_{1,3}^{\mathrm{ALT}}$, the actuarial-fairness condition implies
$$
\lambda_{1}\textcolor{black}{b_{1}}a_{1,1}^{\mathrm{ALT}} + \lambda_{2}\textcolor{black}{b_{2}}a_{1,2}^{\mathrm{ALT}} + \lambda_{3}\textcolor{black}{b_{3}}a_{1,3}^{\mathrm{ALT}} = \lambda_{1}\textcolor{black}{b_{1}}\Rightarrow
a_{1,3}^{\mathrm{ALT}} = 0.250.
$$
Similarly, for $a_{2,3}^{\mathrm{ALT}}$, using the full-allocation property, we have the following,
$$
a_{1,3}^{\mathrm{ALT}}+ a_{2,3}^{\mathrm{ALT}}+a_{3,3}^{\mathrm{ALT}}  = 1
\Rightarrow
a_{2,3}^{\mathrm{ALT}} = 0.050.
$$
Then, using actuarial fairness again, we obtain $a_{2,1}^{\mathrm{ALT}}$,
$$
    \lambda_{1}\textcolor{black}{b_{1}}a_{2,1}^{\mathrm{ALT}} + \lambda_{2}\textcolor{black}{b_{2}}a_{2,2}^{\mathrm{ALT}} + \lambda_{3}\textcolor{black}{b_{3}}a_{2,3}^{\mathrm{ALT}}  = \lambda_{2}\textcolor{black}{b_{2}}\Rightarrow
a_{2,1}^{\mathrm{ALT}} = \textcolor{black}{0.038}.
$$
Finally, using the full-allocation property provides us with $a_{3,1}^{\mathrm{ALT}}$ and $a_{3,2}^{\mathrm{ALT}}$ since
\begin{eqnarray*}
a_{3,1}^{\mathrm{ALT}}& =& 1 - a_{2,1}^{\mathrm{ALT}} - a_{1,1}^{\mathrm{ALT}} = \textcolor{black}{0.163}, \\
a_{3,2}^{\mathrm{ALT}} & =& 1 - a_{2,2}^{\mathrm{ALT}} - a_{1,2}^{\mathrm{ALT}} =0.5.
    \label{NumericalIllustrations-Section5-Equation17}
\end{eqnarray*}
This finally results in the following set of transfer ratios:
$$
\Avec^{\mathrm{ALT}} = \begin{pmatrix}
    0.800 & 0.100 & 0.250 \\ \\
    \textcolor{black}{0.038} & 0.400 & 0.050 \\ \\
    \textcolor{black}{0.163} & 0.500 & 0.700 
    \end{pmatrix}.
$$
%resulting in
%$$
%\left( \beta_{i,j}^{\mathrm{ALT}} \right) =  \begin{pmatrix}
%    0.625 & 20 & 4 \\ \\
%    13.3333 & 5 & 20 \\ \\
%    3.07692 & 4 & 1.42857
%    \end{pmatrix}.
%$$
It is straightforward to verify that the proposed alternative proportional risk-sharing scheme based on the transfer ratios $a_{i,j}^{\mathrm{ALT}}$ is actuarially fair.

The Exponential distributions form a scale family with scale parameter $\textcolor{black}{b_j = \tfrac{1}{\alpha_{j}}}$. Moreover, for both risk-sharing schemes based on $\Avec^{\mathrm{MP}}$ and $\Avec^{\mathrm{ALT}}$ under consideration, the condition $0 \leq a_{i,j} b_j \leq b_i$ is satisfied. Consequently, the assumptions of Proposition~\ref{ImpactofPooling-Section4-Proposition1} hold. Furthermore, this setting allows for analytic expressions of ruin probabilities in both the individual and pooled surplus models. Indeed, it is well known that when claim sizes are Exponentially distributed, the ruin probability in the compound Poisson surplus model \eqref{LinearRiskSharing-Section2-Equation2} admits the closed-form expression
\begin{equation}
    \psi_{i}(\kappa_{i})=\frac{\lambda_{i}}{\alpha_{i} c_{i}} e^{-\left(\alpha_{i} -\frac{\lambda_{i}}{c_{i}}\right) \kappa_{i}}.
    \label{NumericalIllustrations-Section5-Equation18}
\end{equation}
See, for example, \textcolor{black}{Equation (5.3.8)} in Rolski et al. (1999) or Corollary 3.2 in Asmussen and Albrecher (2010, \textcolor{black}{Chapter IV}).

Likewise, when claim sizes follow a mixture of Exponential distributions, the ruin probability in the compound Poisson surplus model \eqref{LinearRiskSharing-Section2-Equation10} is available in the closed form
\begin{eqnarray*}
 \psi_1^{\mathrm{pool-MP}}(\kappa_{1}) &=& \textcolor{black}{0.673}e^{\textcolor{black}{-0.341}\kappa_{1}} + \textcolor{black}{0.035}e^{\textcolor{black}{-1.524}\kappa_{1}} + \textcolor{black}{0.007}e^{\textcolor{black}{-3.626}\kappa_{1}},\\
 \psi_2^{\mathrm{pool-MP}}(\kappa_{2}) &=& \textcolor{black}{0.673}e^{\textcolor{black}{-2.726}\kappa_{2}} + \textcolor{black}{0.035}e^{\textcolor{black}{-12.196}\kappa_{2}} + \textcolor{black}{0.007}e^{\textcolor{black}{-29.007}\kappa_{2}},\\
 \psi_3^{\mathrm{pool-MP}}(\kappa_{3}) &=& \textcolor{black}{0.673}e^{\textcolor{black}{-0.454}\kappa_{3}} + \textcolor{black}{0.035}e^{\textcolor{black}{-2.033}\kappa_{3}} +\textcolor{black}{0.007}e^{\textcolor{black}{-4.835}\kappa_{3}},
\end{eqnarray*}
for the mean-proportional linear risk sharing scheme,  
\begin{eqnarray*}
 \psi_1^{\mathrm{pool-ALT}} (\kappa_{1}) &=& \textcolor{black}{0.677}e^{\textcolor{black}{-0.205}\kappa_{1}} +\textcolor{black}{0.034}e^{\textcolor{black}{-3.519}\kappa_{1}} + \textcolor{black}{0.002}e^{\textcolor{black}{-19.83}\kappa_{1}},\\
 \psi_2^{\mathrm{pool-ALT}} (\kappa_{2}) &=& \textcolor{black}{0.616}e^{\textcolor{black}{-2.165}\kappa_{2}} + 0.08e^{-10\kappa_{2}} + \textcolor{black}{0.018}e^{\textcolor{black}{-17.597}\kappa_{2}},\\
 \psi_3^{\mathrm{pool-ALT}} (\kappa_{3}) &=& \textcolor{black}{0.686}e^{\textcolor{black}{-0.475}\kappa_{3}} + \textcolor{black}{0.023}e^{\textcolor{black}{-2.728}\kappa_{3}} + \textcolor{black}{0.005}e^{\textcolor{black}{-3.874}\kappa_{3}},
 \end{eqnarray*}
for the alternative proportional risk-sharing scheme. \textcolor{black}{For the pooled fund, the ruin probability for the compound Poisson surplus model \eqref{LinearRiskSharing-Section2-Equation5} is given by} 
\begin{equation}
\textcolor{black}{\psi(\kappa) = 0.673e^{-0.182\kappa} +0.035e^{-0.813\kappa} + 0.007e^{-1.934\kappa}.}
\label{ruinprob:pooledex1}
\end{equation}
See, e.g., Equations (13.6.9)-(13.6.\textcolor{black}{13}) in Bowers et al. (1997).

Figure \ref{NumericalIllustrations-Section5-Figure1} displays the ruin probabilities for \textcolor{black}{the pooled fund, and} both the individual and pooled surplus models, under each of the two proportional risk-sharing schemes considered in this section for all participants in the pool. Consistent with Proposition \ref{ImpactofPooling-Section4-Proposition1}, the ruin probabilities associated with the pooled surplus process are lower than the corresponding individual ruin probabilities for every participant. \textcolor{black}{The ruin probability \eqref{ruinprob:pooledex1} for the pooled fund is also shown in Figure \ref{NumericalIllustrations-Section5-Figure1}. In practice, participants may contribute different initial reserve amounts, $\kappa_{i}$, for $i=1,2,3$, to the pool, while the fund\rq s overall financial position depends on the total amount contributed, $\kappa = \kappa_{1} + \kappa_{2} + \kappa_{3}$. For example, if we know that $\kappa_{1}=10$ and $\kappa=20$, then participants $2$ and $3$ may have deposited any amounts $\kappa_{2}$ and $\kappa_{3}$, respectively, satisfying $\kappa_{2}+\kappa_{3}=10$. Although different combinations (e.g., $(\kappa_{2},\kappa_{3})=(5,5)$, $(8,2)$, and $(9.5,0.5)$) may lead to the same initial reserve of the pooled fund and hence the same pooled-fund\rq s ruin probability, these combinations may result in different individual ruin probabilities for participants $2$ and $3$. Thus, Figure \ref{NumericalIllustrations-Section5-Figure1} and subsequent figures provide a way to examine the relationship between the financial position of the pooled fund and the solvency of its individual participants.}

\begin{figure}[H]
  	\centering
	% Plot generated with the R code: Example 3.4.1 - Exponentially-distributed losses.R
        % We could generate other plots if needed.
        \begin{subfigure}[b]{0.32\linewidth}
  	\includegraphics[width=4.5cm, height=4.5cm]{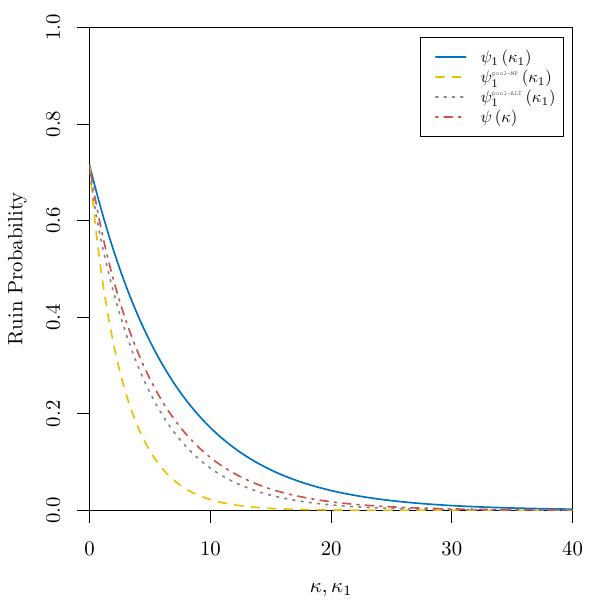}
	\caption{}
  	\label{NumericalIllustrations-Section3-Figure1-a}
	\end{subfigure}
 	 \hspace{0.01cm}
	% Plot generated with the R code: Example 3.4.1 - Exponentially-distributed losses.R
        % We could generate other plots if needed.
         \begin{subfigure}[b]{0.32\linewidth}
 	 \includegraphics[width=4.5cm, height=4.5cm]{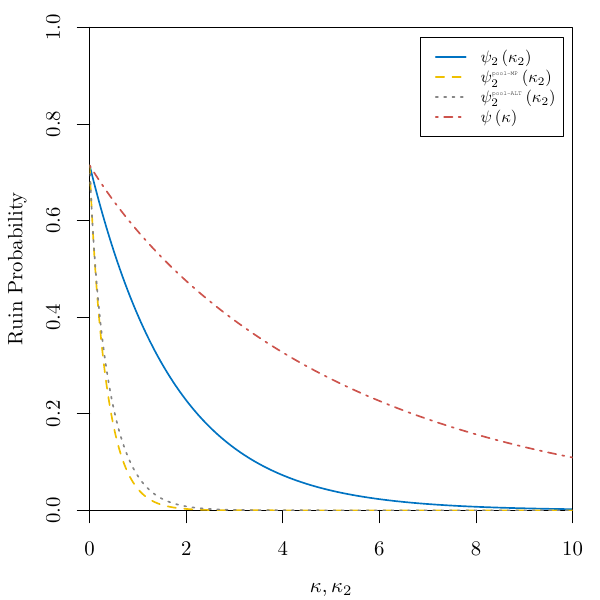}
	 \caption{}
  	\label{NumericalIllustrations-Section3-Figure1-b}
	\end{subfigure}
  	 \hspace{0.01cm}
	% Plot generated with the R code: Example 3.4.1 - Exponentially-distributed losses.R
        % We could generate other plots if needed.
          \begin{subfigure}[b]{0.32\linewidth}
 	 \includegraphics[width=4.5cm, height=4.5cm]{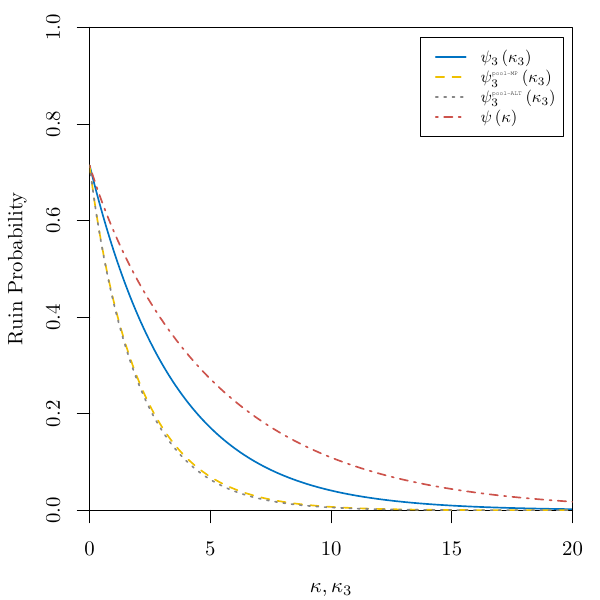}
	 \caption{}
  	\label{NumericalIllustrations-Section3-Figure1-c}
	\end{subfigure}
\caption{\textcolor{black}{Ruin probabilities for the pooled fund, individual and pooled surplus models under the mean-proportional  $\big(\mathrm{pool-MP}\big)$ risk-sharing scheme and the alternative $\big(\mathrm{pool-ALT}\big)$ proportional risk-sharing scheme in the pool of
Section \ref{NumericalIllustrations-Section5-Example1}.}}
\label{NumericalIllustrations-Section5-Figure1}
\end{figure}

\subsubsection{LogNormally-distributed losses}
\label{NumericalIllustrations-Section5-ExampleLN}

Consider a pool of three participants ($n = 3$) where the claim amounts $Y_{i,1}, Y_{i,2}, \ldots$ are now assumed to be independent and distributed according to the LogNormal distribution with parameters $\mu_i$ and $\sigma^2_i$ in Example \ref{LinearRiskSharing-Section3-ExampleLognormal}.
We consider the following parameter values:
\begin{itemize}
    \item[(i)] Claim frequency: $\lambda_1 = 2$, $\lambda_2 = 1$, and $\lambda_3 = 3$,
    \item[(ii)] Claim severity: $\mu_1 = 0$, $\mu_2 = 0.5$, $\mu_3 = -0.3$,
and $\sigma^{2}_1=\sigma^{2}_2 = \sigma^{2}_3 = 1$.
\end{itemize}
The corresponding mean claim sizes are
$\textcolor{black}{b_{1}} = e^{\mu_{1} + \tfrac{\sigma^{2}_{1}}{2}} = \textcolor{black}{1.649}$,
$\textcolor{black}{b_{2}} = e^{\mu_{2} + \tfrac{\sigma^{2}_{2}}{2}} = \textcolor{black}{2.718}$, and
$\textcolor{black}{b_{3}} = e^{\mu_{3} + \tfrac{\sigma^{2}_{3}}{2}} = \textcolor{black}{1.221}$.
Additionally, we assume a safety loading factor of $\eta = \tfrac{2}{5}$. Under these conditions, it is clear that the net profit condition \eqref{LinearRiskSharing-Section2-Equation3} is satisfied for the three participants in the pool. Hence, it is also satisfied for the pooled individual risk account for participant $i$.
In this setting, the probability density function of the random variables $Z_{i,k}$, for $i = 1,2,3$, is given by
$$
    f_{Z_i}(z)
    = \frac{1}{\lambda_{\bullet}}\sum_{j=1}^{3}\frac{\lambda_j}{a_{i,j}}
    f_{Y_j}\!\left(\frac{z}{a_{i,j}}\right)
    = \frac{1}{\lambda_{\bullet}} \sum_{j=1}^{3} \frac{\lambda_{j}}{z \sigma_{\textcolor{black}{j}} \sqrt{2 \pi}} e^{-\frac{\left(\ln \left(\tfrac{z}{a_{i,j}}\right)-\mu_{\textcolor{black}{j}}\right)^2}{2 \sigma^2_{\textcolor{black}{j}}}}, \qquad z > 0.
$$

Again, we consider two different risk-sharing schemes: the mean-proportional risk-sharing scheme
and an alternative scheme defined by
\begin{eqnarray*}
 \Avec^{\mathrm{MP}} & =& \begin{pmatrix}
    \frac{\lambda_{1}e^{\mu_{1} + \tfrac{\sigma^{2}_{1}}{2}}}{\sum\limits^{3}_{j=1}\lambda_{j}e^{\mu_{j} + \tfrac{\sigma^{2}_{j}}{2}}} & \frac{\lambda_{1}e^{\mu_{1} + \tfrac{\sigma^{2}_{1}}{2}}}{\sum\limits^{3}_{j=1}\lambda_{j}e^{\mu_{j} + \tfrac{\sigma^{2}_{j}}{2}}} & \frac{\lambda_{1}e^{\mu_{1} + \tfrac{\sigma^{2}_{1}}{2}}}{\sum\limits^{3}_{j=1}\lambda_{j}e^{\mu_{j} + \tfrac{\sigma^{2}_{j}}{2}}} \\ \\
    \frac{\lambda_{2}e^{\mu_{2} + \tfrac{\sigma^{2}_{2}}{2}}}{\sum\limits^{3}_{j=1}\lambda_{j}e^{\mu_{j} + \tfrac{\sigma^{2}_{j}}{2}}} & \frac{\lambda_{2}e^{\mu_{2} + \tfrac{\sigma^{2}_{2}}{2}}}{\sum\limits^{3}_{j=1}\lambda_{j}e^{\mu_{j} + \tfrac{\sigma^{2}_{j}}{2}}} & \frac{\lambda_{2}e^{\mu_{2} + \tfrac{\sigma^{2}_{2}}{2}}}{\sum\limits^{3}_{j=1}\lambda_{j}e^{\mu_{j} + \tfrac{\sigma^{2}_{j}}{2}}}\\ \\
    \frac{\lambda_{3}e^{\mu_{3} + \tfrac{\sigma^{2}_{3}}{2}}}{\sum\limits^{3}_{j=1}\lambda_{j}e^{\mu_{j} + \tfrac{\sigma^{2}_{j}}{2}}} & \frac{\lambda_{3}e^{\mu_{3} + \tfrac{\sigma^{2}_{3}}{2}}}{\sum\limits^{3}_{j=1}\lambda_{j}e^{\mu_{j} + \tfrac{\sigma^{2}_{j}}{2}}} & \frac{\lambda_{3}e^{\mu_{3} + \tfrac{\sigma^{2}_{3}}{2}}}{\sum\limits^{3}_{j=1}\lambda_{j}e^{\mu_{j} + \tfrac{\sigma^{2}_{j}}{2}}}
    \end{pmatrix} = \begin{pmatrix}
    \textcolor{black}{0.341} & \textcolor{black}{0.341} & \textcolor{black}{0.341} \\ \\
    \textcolor{black}{0.281} & \textcolor{black}{0.281} & \textcolor{black}{0.281} \\ \\
    \textcolor{black}{0.379} & \textcolor{black}{0.379} & \textcolor{black}{0.379}
    \end{pmatrix},
    \label{NumericalIllustrations-Section5-Equation28}
\end{eqnarray*}
and
$$
 \Avec^{\mathrm{ALT}} = \begin{pmatrix}
    0.400 & \textcolor{black}{0.300} & \textcolor{black}{0.317} \\ \\
    \textcolor{black}{0.374} & 0.300 & \textcolor{black}{0.183} \\ \\
    \textcolor{black}{0.226} & \textcolor{black}{0.400} & 0.500
    \end{pmatrix},
$$
respectively. The transfer ratios $a_{i,j}^{\mathrm{ALT}}$ have been obtained by the procedure 
described in Section \ref{NumericalIllustrations-Section5-Example1}.
Both risk-sharing schemes are actuarially fair.

\begin{figure}[H]
  	\centering
	% Plot generated with the R code: Example 3.4.2 - LogNormally-distributed losses.R
        % We could generate other plots if needed.
        \begin{subfigure}[b]{0.32\linewidth}
  	\includegraphics[width=4.5cm, height=4.5cm]{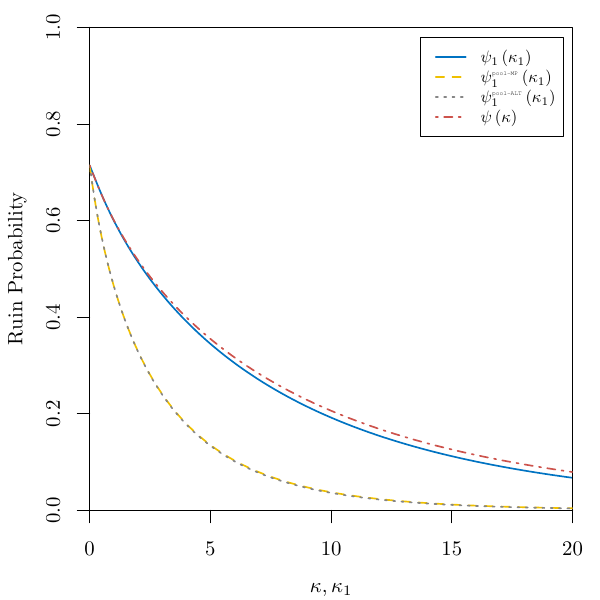}
	\caption{}
  	\label{NumericalIllustrations-Section3-Figure2-a}
	\end{subfigure}
 	 \hspace{0.01cm}
	% Plot generated with the R code: Example 3.4.2 - LogNormally-distributed losses.R
        % We could generate other plots if needed.
         \begin{subfigure}[b]{0.32\linewidth}
 	 \includegraphics[width=4.5cm, height=4.5cm]{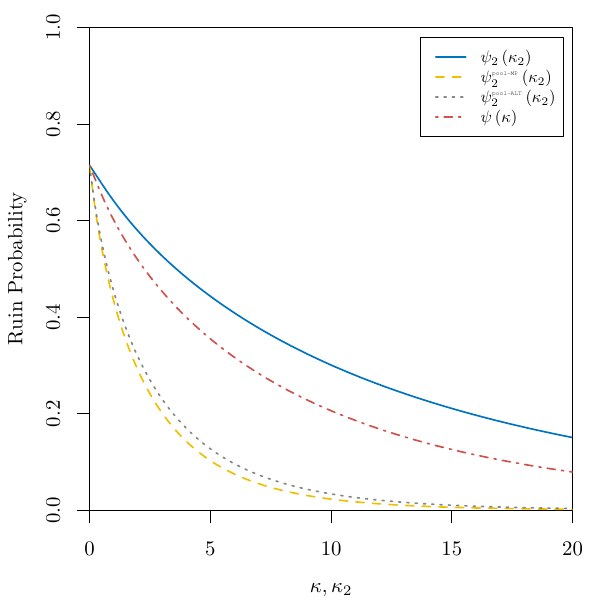}
	 \caption{}
  	\label{NumericalIllustrations-Section3-Figure2-b}
	\end{subfigure}
  	 \hspace{0.01cm}
	% Plot generated with the R code: Example 3.4.2 - LogNormally-distributed losses.R
        % We could generate other plots if needed.
          \begin{subfigure}[b]{0.32\linewidth}
 	 \includegraphics[width=4.5cm, height=4.5cm]{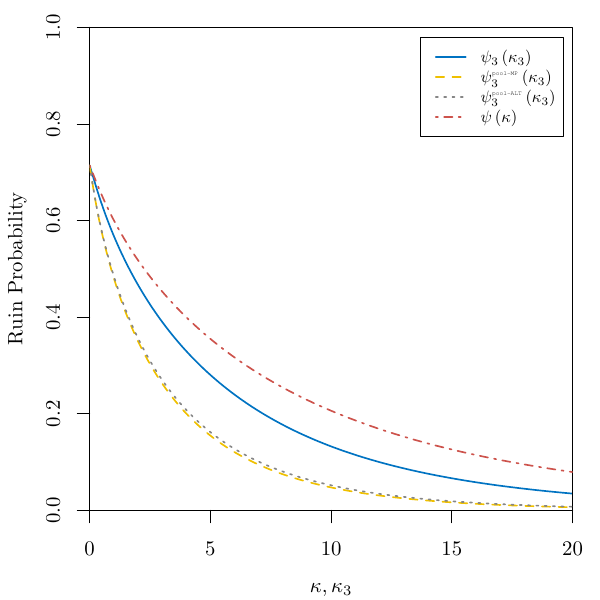}
	 \caption{}
  	\label{NumericalIllustrations-Section3-Figure2-c}
	\end{subfigure}
\caption{\textcolor{black}{Ruin probabilities for the pooled fund, individual and pooled surplus models under the mean-proportional  $\big(\mathrm{pool-MP}\big)$ risk-sharing scheme and the alternative $\big(\mathrm{pool-ALT}\big)$ proportional risk-sharing scheme in the pool of
Section
\ref{NumericalIllustrations-Section5-ExampleLN}.}}
\label{NumericalIllustrations-Section3-Figure2}
\end{figure}

For equal $\sigma_i^{2}$ (as assumed in this section), we know from Example \ref{LinearRiskSharing-Section3-ExampleLognormal} that the LogNormal distributions form a scale family with scale parameter $\textcolor{black}{b_j = e^{\mu_j+\sigma^{2}/2}}$. This restricted family assumes that the ratio of the mean to the median claim size is equal among participants. The condition $0 \leq a_{i,j} b_j \leq b_i$ is satisfied for both risk-sharing rules under consideration. As a result, the assumptions of Proposition~\ref{ImpactofPooling-Section4-Proposition1} hold. Unfortunately, in contrast to Section \ref{NumericalIllustrations-Section5-Example1}, the LogNormal setting does not yield closed-form expressions for infinite-time ruin probabilities.
The representation of the infinite-time ruin probability as the survival function of a compound Geometric sum
resulting from the celebrated Pollaczek–Khinchine formula (also referred to as Beekman's formula in the
actuarial literature) provides a practical way to compute $\psi_{i}(\kappa_{i})$ and $\psi_i^{\mathrm{pool}}(\kappa_{i})$
with the help of Panjer algorithm. 
See, for example, Equation (5.3.16) in Rolski et al. (1999) or Equation (2.2) in Asmussen and Albrecher (2010, \textcolor{black}{Chapter IV}).

Figure \ref{NumericalIllustrations-Section3-Figure2} presents the ruin probabilities for \textcolor{black}{the pooled fund, and} both the individual and pooled surplus models under the two proportional risk-sharing schemes considered in this example (the mean-proportional scheme and the alternative scheme) for all participants in the pool. Consistent with Proposition \ref{ImpactofPooling-Section4-Proposition1}, the ruin probabilities for the pooled surplus process are lower than the corresponding individual ruin probabilities for every participant.

\section{Discussions of the assumptions}
\label{sec:Discussions of the assumptions}

Section~\ref{sec:Linear risk sharing at occurrence time} established a sufficient set of conditions under which proportional pooling reduces
the infinite-time ruin probability of every participant. The purpose of the present section is
to clarify the scope of this result by examining more closely the role of the underlying
assumptions. In particular, we discuss what may happen when Assumption
\ref{LinearRiskSharing-Section3-AssumptionSF} is relaxed, so that claim severities no longer
belong to a common scale family, and when Assumption
\ref{LinearRiskSharing-Section3-AssumptionCC} is not imposed, so that some bilateral
transfers may become too large relative to the participants' own severity levels. These
questions are important because the assumptions of Proposition
\ref{ImpactofPooling-Section4-Proposition1} are sufficient rather than necessary, and the
following subsections are intended to better understand both their practical meaning and
their limitations.

\subsection{On the scale family assumption}
\label{discussion scale family}

\subsubsection{Beyond scale families}

Assumption \ref{LinearRiskSharing-Section3-AssumptionSF} requires the normalized severities $Y_i/b_i$ to be equal in distribution across
participants. This is a convenient sufficient condition for Proposition \ref{ImpactofPooling-Section4-Proposition1}, but it is stronger
than what is actually needed in the convex-order argument underlying the proof. The purpose
of this subsection is to identify a weaker condition that is nevertheless necessary if one wants
the key convex-order comparison to hold for all admissible proportional sharing rules. We
show that, even without assuming a common scale family, the normalized severities must be
ordered in the convex order.

To focus on the role of severities only, we consider here the case of homogeneous claim
frequencies, that is, we assume $\lambda_1=\cdots=\lambda_n=\lambda$.
For each participant $i\in\{1,\ldots,n\}$, let $Y_i\ge 0$ denote the individual claim severity,
with finite mean $b_i=\Esp[Y_i]\in(0,\infty)$. Conditionally on an occurrence in the pool, the triggering participant index $J$ is then
uniformly distributed over $\{1,\ldots,n\}$, that is,
\begin{equation}
    \Prob[J=j]=\frac1n,
    \qquad j=1,\ldots,n.
    \label{BeyondScaleFamilies-Section4-Equation1}
\end{equation}

Let $\Avec=(a_{i,j})_{1\le i,j\le n}$ be a proportional sharing matrix satisfying the full-allocation
property, so that $\sum_{i=1}^n a_{i,j}=1$ holds for every $j=1,\ldots,n$. For participant $i$, the amount borne at an occurrence time is
\begin{equation}
    Z_i:=a_{i,J}Y_J.
    \label{BeyondScaleFamilies-Section4-Equation2}
\end{equation}
As in the proof of Proposition \ref{ImpactofPooling-Section4-Proposition1}, the stand-alone benchmark can be written as
\begin{equation}
    Y_i':=B_iY_i,
    \label{BeyondScaleFamilies-Section4-Equation3}
\end{equation}
where $B_i\sim \mathrm{Bernoulli}(1/n)$ is independent of $Y_i$.

Under homogeneous claim frequencies, actuarial fairness becomes $\sum_{j=1}^n a_{i,j}b_j=b_i$,
$i=1,\ldots,n$, and this already implies the capacity constraint
$a_{i,j}b_j\le b_i$, $i,j=1,\ldots,n$,
because all summands are non-negative. Throughout this subsection, an admissible
sharing matrix means a matrix satisfying full allocation and actuarial fairness.

We can now state the main result of this subsection. \textcolor{black}{The proof of Proposition \ref{BeyondScaleFamilies-Section4-Proposition1} is provided in Appendix \ref{sec:Proof of Proposition 4.1}}.

\begin{proposition}
\label{BeyondScaleFamilies-Section4-Proposition1}
Assume that, for every admissible sharing matrix $\Avec$, the convex-order comparison
\begin{equation}
    Z_i \lcx Y_i'
    \qquad\text{holds for all } i=1,\ldots,n,
    \label{BeyondScaleFamilies-Section4-Equation4}
\end{equation}
with $Z_i$ and $Y_i'$ defined by \eqref{BeyondScaleFamilies-Section4-Equation2} and
\eqref{BeyondScaleFamilies-Section4-Equation3}. Then, for every pair $(i,j)$ such that
$b_i\le b_j$,
\begin{equation}
    \frac{Y_j}{b_j}\lcx\frac{Y_i}{b_i}.
    \label{BeyondScaleFamilies-Section4-Equation5}
\end{equation}
Equivalently, if $b_{(1)}\le \cdots \le b_{(n)}$ denote the ordered means, then
\begin{equation}
    \frac{Y_{(n)}}{b_{(n)}}\lcx     \frac{Y_{(n-1)}}{b_{(n-1)}}\lcx    \cdots \lcx    \frac{Y_{(1)}}{b_{(1)}}.
    \label{BeyondScaleFamilies-Section4-Equation6}
\end{equation}
\end{proposition}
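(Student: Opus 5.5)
The plan is to prove the contrapositive-free direct statement by exhibiting, for each pair $(i,j)$ with $b_i\le b_j$ and $i\neq j$, one specific admissible sharing matrix whose convex-order comparison \eqref{BeyondScaleFamilies-Section4-Equation4} for participant $i$ is exactly equivalent to \eqref{BeyondScaleFamilies-Section4-Equation5}. The case $i=j$ is trivial.

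\textbf{Step 1: the swap matrix.} Define $\Avec$ by
\[
a_{i,i}=0,\quad a_{i,j}=\frac{b_i}{b_j},\quad a_{j,i}=1,\quad a_{j,j}=1-\frac{b_i}{b_j},\quad a_{k,k}=1\ (k\neq i,j),
\]
with all other entries zero. Since $b_i\le b_j$, all entries lie in $[0,1]$.

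Full allocation holds column by column. Column $i$ sums to $a_{j,i}=1$. Column $j$ sums to $b_i/b_j+1-b_i/b_j=1$. Every other column has a single $1$ on the diagonal.

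Actuarial fairness (homogeneous frequencies) is checked row by row. For row $i$, $a_{i,j}b_j=b_i$. For row $j$, $b_i+(1-b_i/b_j)b_j=b_j$. For every other $k$, the row gives $b_k$ trivially. So $\Avec$ is admissible, and the hypothesis gives $Z_i\lcx Y_i'$.

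\textbf{Step 2: identify $Z_i$.} With this matrix, $Z_i=a_{i,J}Y_J=\frac{b_i}{b_j}Y_j\,\mathds{1}\{J=j\}$. Here $\Prob[J=j]=1/n$ by \eqref{BeyondScaleFamilies-Section4-Equation1}, and $J$ is independent of the severities. The benchmark $Y_i'=B_iY_i$ has the same structure: with probability $1/n$ it equals $Y_i$, and otherwise it is $0$.

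\textbf{Step 3: translate the stop-loss inequalities.} For $t\ge 0$, $Z_i\lcx Y_i'$ gives
\[
\tfrac1n\Esp\Bigl[\bigl(\tfrac{b_i}{b_j}Y_j-t\bigr)_+\Bigr]\le\tfrac1n\Esp[(Y_i-t)_+].
\]
Dividing by $b_i$ and writing $s=t/b_i$, this becomes
\[
\Esp\bigl[(Y_j/b_j-s)_+\bigr]\le \Esp\bigl[(Y_i/b_i-s)_+\bigr]\quad\text{for all } s\ge 0.
\]
For $s<0$, both sides equal $1-s$, because the normalized variables are nonnegative with mean $1$. The means also coincide. Hence $Y_j/b_j\lcx Y_i/b_i$, which is \eqref{BeyondScaleFamilies-Section4-Equation5}.

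\textbf{Step 4: the chain.} Applying Step 3 to consecutive order statistics of the means gives the chain \eqref{BeyondScaleFamilies-Section4-Equation6}. When two means are tied, the comparison holds in both directions, so the two normalized severities are equal in distribution and the ordering of tied labels is immaterial.

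The only real obstacle is finding a matrix that isolates the single bilateral transfer $j\to i$ while keeping every row actuarially fair. The swap above does this: $j$ absorbs all of $i$'s claims, and $i$ takes exactly the fraction $b_i/b_j$ of $j$'s claims. After that, the argument reduces to routine manipulation of stop-loss transforms.
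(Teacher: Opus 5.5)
Your proposal is correct and follows the paper's proof essentially verbatim: the same two-agent swap matrix with $a_{i,j}=b_i/b_j$, $a_{j,j}=1-b_i/b_j$, $a_{j,i}=1$, $a_{i,i}=0$, followed by cancelling the common atom at $0$ shared by $Z_i$ and $Y_i'$. You do this via stop-loss transforms and the paper via general convex functions, which is equivalent; your remark that tied means force equality in distribution is a small detail the paper leaves implicit.
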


Proposition \ref{BeyondScaleFamilies-Section4-Proposition1} shows that Assumption \ref{LinearRiskSharing-Section3-AssumptionSF} is
not necessary in its full strength. What is necessary, at least if one requires the convex-order
comparison in the proof of Proposition \ref{ImpactofPooling-Section4-Proposition1} to hold for all admissible sharing rules, is the
existence of a monotone convex-order structure across normalized severities. In particular,
participants with larger mean severities must have less dispersed loss ratios $Y_i/b_i$ in the
convex-order sense.

This necessary condition is strictly weaker than equality in distribution of the normalized
severities. Indeed, the chain \eqref{BeyondScaleFamilies-Section4-Equation6} does not imply
that the variables $Y_i/b_i$ are identically distributed. Hence, it does not force a common
scale family. \textcolor{black}{Appendix \ref{sec:Examples illustrating the role of AssumptionSF}} provides explicit counterexamples showing that one may
have \eqref{BeyondScaleFamilies-Section4-Equation5} without having
$Y_1/b_1 \eqD \cdots \eqD Y_n/b_n$.
Conversely, if one had the reverse convex ordering as well, then equality in distribution
would follow, and one would recover the scale-family structure.

{\color{black}
\begin{remark}[On heterogeneous claim frequencies]
The homogeneous-frequency assumption in Proposition~\ref{BeyondScaleFamilies-Section4-Proposition1} is mainly used to isolate the role of claim severities from the role of claim frequencies. Without this assumption, the result cannot be stated only in terms of the normalized severities \(Y_i/b_i\). Indeed, the stand-alone benchmark of participant \(i\), when represented on the aggregate occurrence process, is a zero-inflated loss 
\(B_iY_i\), where \(B_i\sim\mathrm{Bernoulli}(p_i)\) with \(p_i=\lambda_i/\lambda_\bullet\). Hence, when the frequencies are heterogeneous, the Bernoulli thinning probabilities differ across participants.

A partial analogue of Proposition~\ref{BeyondScaleFamilies-Section4-Proposition1} can nevertheless be obtained. Suppose that admissible sharing matrices are required to satisfy full allocation, actuarial fairness and the capacity constraint. Consider two participants \(i\neq j\) such that
$\lambda_i\leq \lambda_j$ and $b_i\leq b_j$.
Then \(m_i=\lambda_i b_i\leq \lambda_j b_j=m_j\). Set
\(\alpha=m_i/m_j\). The two-agent transfer construction used in the proof of Proposition~\ref{BeyondScaleFamilies-Section4-Proposition1} in Appendix \ref{sec:Proof of Proposition 4.1}, with

\[
    a_{i,j}=\alpha,\qquad
    a_{j,j}=1-\alpha,\qquad
    a_{j,i}=1,\qquad
    a_{i,i}=0,
\]
and \(a_{\ell,\ell}=1\) for \(\ell\notin\{i,j\}\), satisfies full allocation, actuarial fairness and the capacity constraint. If the convex-order comparison $Z_\ell\lcx Y'_\ell$, $\ell=1,\ldots,n$,
is required to hold for every admissible sharing matrix, then applying it to this particular matrix yields
$B_j\alpha Y_j \lcx B_iY_i$,
where \(B_j\sim\mathrm{Bernoulli}(p_j)\), \(B_i\sim\mathrm{Bernoulli}(p_i)\), and \(p_\ell=\lambda_\ell/\lambda_\bullet\). Equivalently, after normalization by the common mean \(p_i b_i\),
\[
    B_j\frac{Y_j}{p_jb_j}
    \lcx
    B_i\frac{Y_i}{p_ib_i}.
\]
Thus, in the heterogeneous-frequency case, the relevant necessary condition involves frequency-thinned normalized losses rather than the normalized severities alone. When \(\lambda_i=\lambda_j\), this reduces to the comparison of normalized severities appearing in Proposition~\ref{BeyondScaleFamilies-Section4-Proposition1}. This explains why the homogeneous-frequency assumption is useful for obtaining the simple chain of convex-order comparisons in \eqref{BeyondScaleFamilies-Section4-Equation6}.
\end{remark}
}

\subsubsection{Pooling may not benefit all participants beyond scale families}
\label{SecPoolNotBene}

The previous subsection showed that Assumption \ref{LinearRiskSharing-Section3-AssumptionSF} is stronger than necessary for the
convex-order argument underlying Proposition \ref{ImpactofPooling-Section4-Proposition1}, since a weaker necessary condition can
still hold outside the scale-family framework. The purpose of the present subsection is to
make a different point. We provide a numerical example in which claim severities do not
belong to a common scale family and in which the conclusion of Proposition \ref{ImpactofPooling-Section4-Proposition1} no longer
holds uniformly across participants. More precisely, pooling may reduce the infinite-time
ruin probability for some participants while increasing it for others. Hence, outside the
scale-family setting, the ruin probabilities under stand-alone operation and under pool
participation are not necessarily uniformly comparable.

To construct this example, we build on the setting of Section \ref{Scale loss family constraint}, 
as the LogNormal distributions do not form a scale family when the parameters $\sigma_i^2$ differ across participants. 
We consider a pool of three participants ($n=3$) with LogNormally-distributed losses where
\begin{itemize}
    \item[(i)] Claim frequency: $\lambda_1 = 1$, $\lambda_2 = 1$, and $\lambda_3 = 1$,
    \item[(ii)] Claim severity: $\mu_1 = \textcolor{black}{-3.224}$, $\mu_2 = \textcolor{black}{-0.171}$, and $\mu_3 = \textcolor{black}{0.288}$, $\sigma^{2}_1 = \textcolor{black}{4.615}$, $\sigma^{2}_2 = \textcolor{black}{0.342}$, and $\sigma^{2}_3 = \textcolor{black}{0.236}$.
\end{itemize}
The corresponding mean claim sizes are
$\textcolor{black}{b_{1}} = e^{\mu_{1} + \tfrac{\sigma^{2}_{1}}{2}} = \textcolor{black}{0.4}$,
$\textcolor{black}{b_{2}} = e^{\mu_{2} + \tfrac{\sigma^{2}_{2}}{2}} = \textcolor{black}{1}$, and
$\textcolor{black}{b_{3}} = e^{\mu_{3} + \tfrac{\sigma^{2}_{3}}{2}} = \textcolor{black}{1.5}$,
while the corresponding variances are
\begin{align*}
    \sigma^{2}_{Y_1} = \left(e^{\sigma^{2}_{1}} - 1\right) e^{2\mu_{1} + \sigma^{2}_{1}} &= \textcolor{black}{16.004}, \qquad
    \sigma^{2}_{Y_2} = \left(e^{\sigma^{2}_{2}} - 1\right) e^{2\mu_{2} + \sigma^{2}_{2}} = \textcolor{black}{0.408}, \\
    &\text { and } \quad
    \sigma^{2}_{Y_3} = \left(e^{\sigma^{2}_{3}} - 1\right) e^{2\mu_{3} + \sigma^{2}_{3}} = \textcolor{black}{0.598}.
    %\label{NumericalIllustrations-Section5-Equation32}
\end{align*}
We report the variances as they play an important role in what follows. 

Participant $1$ has the smallest expected loss among the participants but a high relative volatility, as measured by its variance-to-mean ratio, $\tfrac{\sigma^{2}_{Y_{1}}}{\textcolor{black}{b_{1}}} = \tfrac{\textcolor{black}{16.004}}{\textcolor{black}{0.4}} = \textcolor{black}{40.006}$. In contrast, the other two participants are expected to experience larger losses, but their variance-to-mean ratios, $\tfrac{\sigma^{2}_{Y_{2}}}{\textcolor{black}{b_{2}}} = \tfrac{\textcolor{black}{0.408}}{\textcolor{black}{1}} = \textcolor{black}{0.408}$ and $\tfrac{\sigma^{2}_{Y_{3}}}{\textcolor{black}{b_{3}}} = \tfrac{\textcolor{black}{0.598}}{\textcolor{black}{1.5}} = \textcolor{black}{0.399}$, 
are much smaller, indicating relatively stable volatilities relative to their means.

Under these characteristics, since the standard deviation of participant $1$ is much larger than expected loss, the corresponding loss distribution is right-skewed with high tail risk. This tendency toward large loss realizations will be crucial in what follows, as we will see that when risk-sharing rules are not properly designed (even if they satisfy both the full-allocation property and the actuarial-fairness condition), pooling may be detrimental to some participants in the pool.

Additionally, we assume a safety loading factor of $\eta = \tfrac{2}{5}$. Under these conditions, it is clear that the net profit condition \eqref{LinearRiskSharing-Section2-Equation3} is satisfied for the three participants in the pool. Hence, it is also satisfied for the pooled individual risk account for each participant.

As before, we consider the mean-proportional scheme defined by
$$
\Avec^{\mathrm{MP}} = \begin{pmatrix}
     \textcolor{black}{0.138} & \textcolor{black}{0.138} & \textcolor{black}{0.138} \\ \\
     \textcolor{black}{0.345} & \textcolor{black}{0.345} & \textcolor{black}{0.345} \\ \\
     \textcolor{black}{0.517} & \textcolor{black}{0.517} & \textcolor{black}{0.517}
    \end{pmatrix}.
$$
Following the same approach as in Section \ref{NumericalIllustrations-Section5-Example1}, we also construct an alternative set of transfer
ratios. However, we choose these fractions so that participant $3$ bears the majority of the risk originating from participant $1$, while still retaining most of his or her own losses. In this sense, participant $3$ is exposed both to the high volatility of losses from participant $1$ and to limited risk-sharing of his or her own claims with the rest of the pool. This is formalized by
$$
\Avec^{\mathrm{ALT}} = \begin{pmatrix}
     \textcolor{black}{0.080} & \textcolor{black}{0.100} & \textcolor{black}{0.179} \\ \\
     \textcolor{black}{0.020} & \textcolor{black}{0.850} & \textcolor{black}{0.095} \\ \\
     \textcolor{black}{0.900} & \textcolor{black}{0.050} & \textcolor{black}{0.727}
    \end{pmatrix}.
$$
It can be verified that actuarial fairness holds for both the mean-proportional scheme and the proposed alternative scheme.

Figure \ref{NumericalIllustrations-Section4-Figure1} displays the ruin probabilities for \textcolor{black}{the pooled fund, and} both the individual and pooled surplus models under the two risk-sharing schemes under consideration.
As expected, participant $1$ is the only member of the pool who benefits from pooling under both schemes. This is because the corresponding stand-alone ruin probability is high, due to the large volatility of claim sizes relative to the mean, while after pooling its ruin probability is reduced since, under both risk-sharing rules, it retains only a small fraction of its own losses.

\begin{figure}[H]
  	\centering
	% Plot generated with the R code: Example 4.1.2 - LogNormally-distributed losses.R
        % We could generate other plots if needed.
        \begin{subfigure}[b]{0.32\linewidth}
  	\includegraphics[width=4.5cm, height=4.5cm]{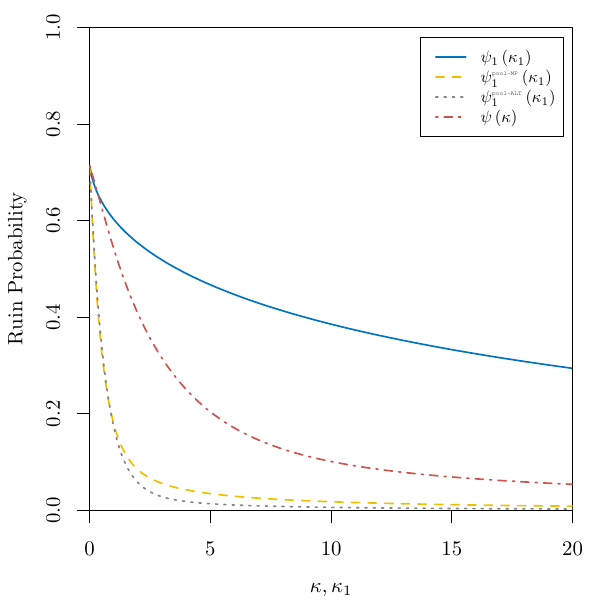}
	\caption{}
  	\label{NumericalIllustrations-Section4-Figure1-a}
	\end{subfigure}
 	 \hspace{0.01cm}
	% Plot generated with the R code: Example 4.1.2 - LogNormally-distributed losses.R
        % We could generate other plots if needed.
         \begin{subfigure}[b]{0.32\linewidth}
 	 \includegraphics[width=4.5cm, height=4.5cm]{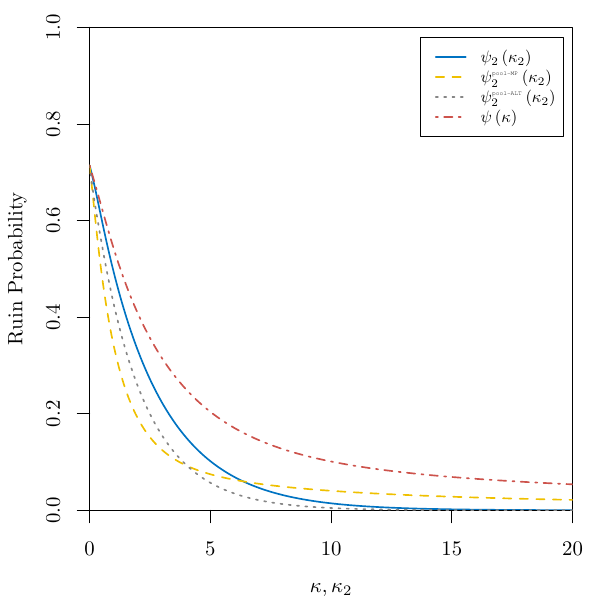}
	 \caption{}
  	\label{NumericalIllustrations-Section4-Figure1-b}
	\end{subfigure}
  	 \hspace{0.01cm}
	% Plot generated with the R code: Example 4.1.2 - LogNormally-distributed losses.R
        % We could generate other plots if needed.
          \begin{subfigure}[b]{0.32\linewidth}
 	 \includegraphics[width=4.5cm, height=4.5cm]{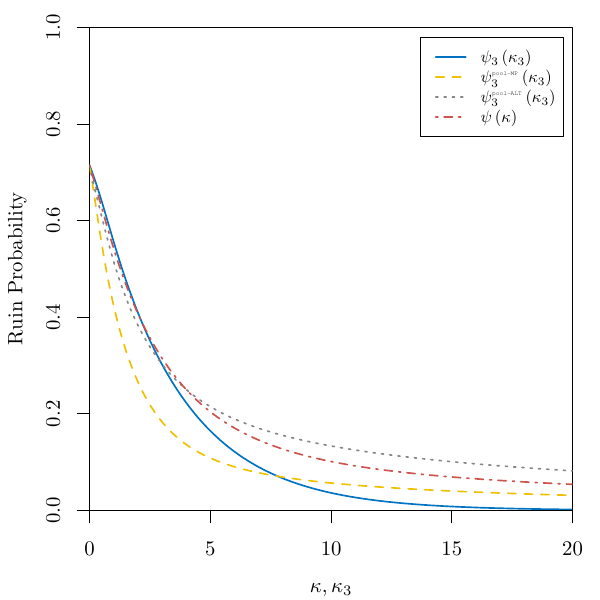}
	 \caption{}
  	\label{NumericalIllustrations-Section4-Figure1-c}
	\end{subfigure}
\caption{\textcolor{black}{Ruin probabilities for the pooled fund, individual and pooled surplus models under the mean-proportional  $\big(\mathrm{pool-MP}\big)$ risk-sharing scheme and the alternative $\big(\mathrm{pool-ALT}\big)$ risk-sharing scheme in the pool of
Section \ref{SecPoolNotBene}.}}
\label{NumericalIllustrations-Section4-Figure1}
\end{figure}

In contrast, participants $2$ and $3$ are adversely affected by pooling (under the mean-proportional scheme in the case of participant $2$, and under both the mean-proportional and alternative schemes in the case of participant $3$) as illustrated in Figures \ref{NumericalIllustrations-Section4-Figure1-b} and \ref{NumericalIllustrations-Section4-Figure1-c}. This outcome is mainly due to the fact that these participants retain a large proportion of their own losses while also contributing a material share to the losses incurred by the other members of the pool.

It is worth noting that pooling remains beneficial for the most vulnerable participants, that is, those with the lowest initial deposits (smallest $\kappa_i$), whereas it is not beneficial for those with higher initial deposits. This is intuitive, since bearing a substantial fraction of the losses incurred by participant $1$ exposes participants $2$ and $3$ to large transfers that may drive them to ruin even when they start from relatively strong financial positions.

\subsection{Capacity constraint}

In contrast with Section~\ref{discussion scale family}, where we showed that the scale loss family assumption is a
convenient sufficient condition but not a necessary one, the purpose of the present section is
to explain why the capacity constraint plays a more essential role in Proposition
\ref{ImpactofPooling-Section4-Proposition1}. More precisely, we show that even when claim
severities belong to a common scale family, and even when the sharing rule satisfies both
the full-allocation property and Assumption
\ref{LinearRiskSharing-Section3-AssumptionAF}, the risk-reducing effect of pooling may fail
as soon as Assumption \ref{LinearRiskSharing-Section3-AssumptionCC} is violated. Thus,
the examples below are designed to isolate the specific contribution of the capacity
constraint in the ruin-probability comparison.

To illustrate this point, we consider two numerical examples with Exponentially distributed
losses. The first example is constructed so that the capacity constraint fails only for one
participant, which makes it possible to identify clearly the local adverse effect of excessive
transfers on that participant’s ruin probability. The second example strengthens this message
by considering a setting in which the constraint is violated for two participants,
simultaneously. Taken together, these examples show that, without Assumption
\ref{LinearRiskSharing-Section3-AssumptionCC}, pooling does not necessarily improve solvency for all
members of the fund, even in a setting where the scale-family assumption is satisfied.

\subsubsection{A numerical illustration with failure of the constraint for one participant}
\label{SubSub421}

We again consider a pool of three participants ($n=3$) with Exponentially-distributed losses, specified as follows:
\begin{itemize}
    \item[(i)] Claim frequency: $\lambda_1=2, \lambda_2=5$, and $\lambda_3=40$,
    \item[(ii)] Claim severity: $\alpha_1=\frac{1}{10}, \alpha_2=\frac{1}{4}$, and $\alpha_3=2$.
\end{itemize}
The corresponding mean claim sizes are then
$\textcolor{black}{b_{1}}=\frac{1}{\alpha_1}=10$, $\textcolor{black}{b_{2}}=\frac{1}{\alpha_2}=4$, and
$\textcolor{black}{b_{3}}=\frac{1}{\alpha_3}=\frac{1}{2}$.

As in the previous section, we assume a safety loading factor of $\eta = \tfrac{2}{5}$ and we adopt either
the mean-proportional or alternative schemes defined by
$$
\Avec^{\mathrm{MP}}=\left(\begin{array}{ccc}
\frac{1}{3} & \frac{1}{3} & \frac{1}{3} \\ \\
\frac{1}{3} & \frac{1}{3} & \frac{1}{3} \\ \\
\frac{1}{3} & \frac{1}{3} & \frac{1}{3}
\end{array}\right)\text{ and }
\Avec^{\mathrm{ALT}}=\left(\begin{array}{ccc}
0.5 & 0.3 & 0.2 \\ \\
0.1 & 0.6 & 0.3 \\ \\
0.4 & 0.1 & 0.5
\end{array}\right),
$$
respectively.

Clearly, for participant 3, both the mean-proportional and alternative schemes violate Assumption \ref{LinearRiskSharing-Section3-AssumptionCC}. Specifically,
\begin{align*}
a^{\mathrm{MP}}_{3, 1} & = \frac{1}{3} \approx \textcolor{black}{0.333} > \frac{b_{3}}{b_{1}} = \frac{\frac{1}{2}}{10} = 0.05, \\ a^{\mathrm{MP}}_{3, 2} &= \frac{1}{3} \approx \textcolor{black}{0.333} > \frac{b_{3}}{b_{2}} = \frac{\frac{1}{2}}{4} = 0.125, \qquad \text{ and } \\ a^{\mathrm{ALT}}_{3, 1} &= 0.4 > \frac{b_{3}}{b_{1}} = \frac{\frac{1}{2}}{10} = 0.05.
\end{align*}

As a result, participant 3 does not benefit from pooling, as illustrated in Figure \ref{NumericalIllustrations-Section4-Figure2}.

\textcolor{black}{A supplementary example analogous to that presented in Section \ref{SubSub421}, but considering failure of constraint \eqref{Const_b} for two participants, is provided in Appendix \ref{sec:Supplementary example illustrating the role of AssumptionCC}.}

\begin{figure}[H]
  	\centering
	% Plot generated with the R code: Example 4.2.1 - Exponentially-distributed losses (Failure for one participant).R
        % We could generate other plots if needed.
        \begin{subfigure}[b]{0.32\linewidth}
  	\includegraphics[width=4.5cm, height=4.5cm]{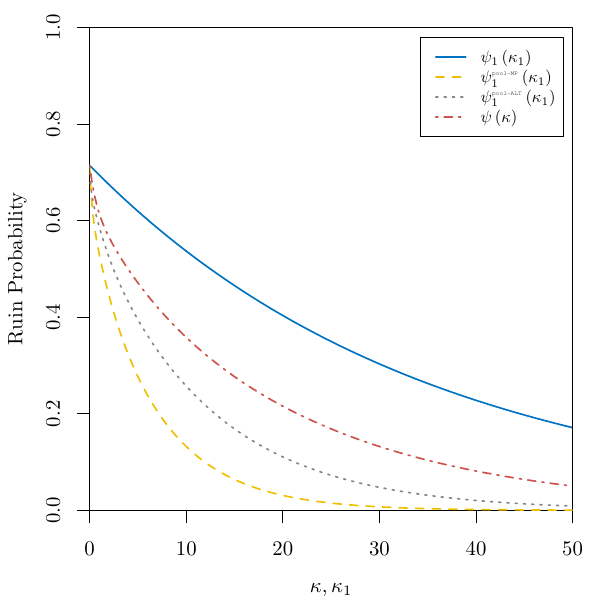}
	\caption{}
  	\label{NumericalIllustrations-Section4-Figure2-a}
	\end{subfigure}
 	 \hspace{0.01cm}
	% Plot generated with the R code: Example 4.2.1 - Exponentially-distributed losses (Failure for one participant).R
        % We could generate other plots if needed.
         \begin{subfigure}[b]{0.32\linewidth}
 	 \includegraphics[width=4.5cm, height=4.5cm]{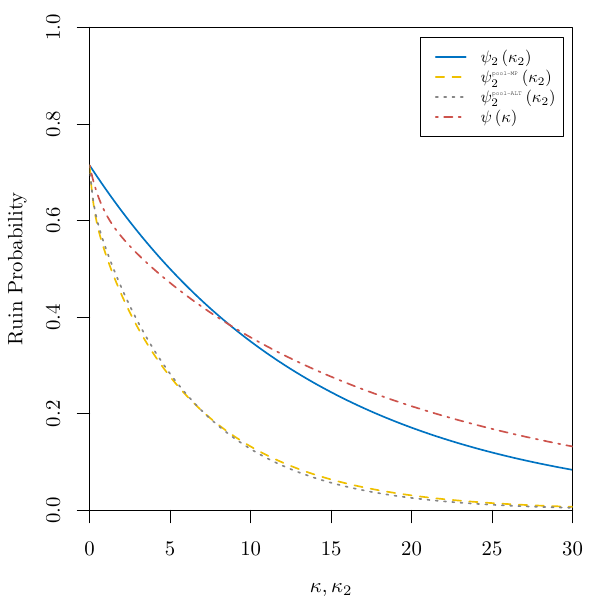}
	 \caption{}
  	\label{NumericalIllustrations-Section4-Figure2-b}
	\end{subfigure}
  	 \hspace{0.01cm}
	% Plot generated with the R code: Example 4.2.1 - Exponentially-distributed losses (Failure for one participant).R
        % We could generate other plots if needed.
          \begin{subfigure}[b]{0.32\linewidth}
 	 \includegraphics[width=4.5cm, height=4.5cm]{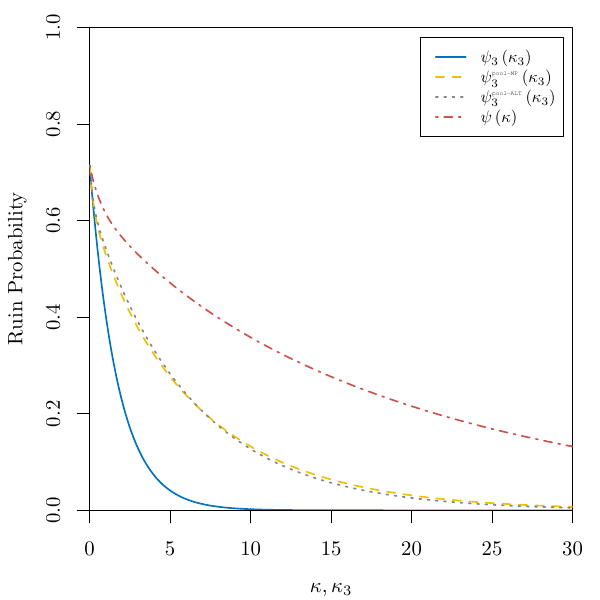}
	 \caption{}
  	\label{NumericalIllustrations-Section4-Figure2-c}
	\end{subfigure}
\caption{\textcolor{black}{Ruin probabilities for the pooled fund, individual and pooled surplus models under the mean-proportional (pool - MP) risk-sharing scheme and the alternative (pool - ALT) proportional risk-sharing scheme in the pool of Section \ref{SubSub421}.}}
\label{NumericalIllustrations-Section4-Figure2}
\end{figure}

{\color{black}
\section{Designing transfer ratios and Pareto-efficient candidates} \label{Pareto-OptimalityforRuinProbabilities-Section5}

\subsection{Pareto-efficient design of transfer ratios}
\label{sec:Pareto-efficient-design-transfer-ratios}

We now briefly discuss how the allocation matrix can be designed when several feasible sharing rules satisfy the sufficient conditions of Proposition~\ref{ImpactofPooling-Section4-Proposition1}. The objective of this section is not to provide a complete characterization of optimal sharing matrices, but rather to show how the preceding constraints can be used to formulate tractable design problems for the transfer ratios. Since improving the solvency of one participant may come at the expense of another, a natural criterion is Pareto efficiency with respect to the vector of individual ruin probabilities.

Let
\begin{align}
\mathcal{A}
=
\left\{
\Avec\in[0,1]^{n\times n}:
\Avec\mvec=\mvec,\quad
\mathbf{1}^{\mathsf{T}}\Avec=\mathbf{1}^{\mathsf{T}},\quad
0\leq a_{i,j}\leq \frac{b_i}{b_j},\ i,j=1,\ldots,n
\right\},
\label{Pareto-OptimalityforRuinProbabilities-Section5-Equation2}
\end{align}
where
\[
\mvec
=
(m_1,m_2,\ldots,m_n)^{\mathsf{T}}
=
(\lambda_1b_1,\lambda_2b_2,\ldots,\lambda_nb_n)^{\mathsf{T}}
\]
is the vector containing the expected losses per unit of time borne under stand-alone operation by the participants in the pool, and $\mathbf{1}$ denotes the $n\times 1$ all-ones vector. The three constraints defining $\mathcal{A}$ correspond respectively to actuarial fairness, full allocation, and the capacity condition.

For $\Avec\in\mathcal{A}$, write
\[
R(\Avec)
=
\left(
\psi^{\mathrm{pool}}_1(\kappa_1;\Avec),
\ldots,
\psi^{\mathrm{pool}}_n(\kappa_n;\Avec)
\right)
\]
for the vector of individual infinite-time ruin probabilities induced by the sharing matrix $\Avec$. A matrix $\Avec^\star\in\mathcal{A}$ is Pareto efficient if there is no $\Avec\in\mathcal{A}$ such that
\[
\psi^{\mathrm{pool}}_i(\kappa_i;\Avec)
\leq
\psi^{\mathrm{pool}}_i(\kappa_i;\Avec^\star),
\qquad i=1,\ldots,n,
\]
with at least one strict inequality.

A standard scalarization approach consists in choosing weights $w_1,\ldots,w_n>0$, with $\sum_{i=1}^n w_i=1$, and solving
\begin{align}
\Avec^\star(w)
\in
\arg\min_{\Avec\in\mathcal{A}}
\sum_{i=1}^n
w_i
\psi^{\mathrm{pool}}_i(\kappa_i;\Avec).
\label{Pareto-OptimalityforRuinProbabilities-Section5-Equation1}
\end{align}
Any global minimizer of \eqref{Pareto-OptimalityforRuinProbabilities-Section5-Equation1}, when all weights are strictly positive, is Pareto efficient. Indeed, if such a minimizer were dominated by another feasible matrix, the weighted sum of ruin probabilities would be strictly smaller, contradicting optimality. Conversely, without additional convexity assumptions on the attainable set $\left\{R(\Avec):\Avec\in\mathcal{A}\right\}$,
weighted-sum scalarizations need not generate all Pareto-efficient matrices. Therefore, the optimization problem \eqref{Pareto-OptimalityforRuinProbabilities-Section5-Equation1} should be viewed as a practical way to identify Pareto-efficient candidates rather than as a complete characterization of all efficient sharing rules.

Other scalarizations could also be considered. For instance, one may seek to maximize the worst individual improvement,
\[
\max_{\Avec\in\mathcal{A}}
\min_{1\leq i\leq n}
\left\{
\psi_i(\kappa_i)
-
\psi^{\mathrm{pool}}_i(\kappa_i;\Avec)
\right\},
\]
or its relative version. We leave a systematic comparison of such design criteria for future research.

In the following subsection, we consider an example in which the optimization problem \eqref{Pareto-OptimalityforRuinProbabilities-Section5-Equation1} admits a relatively tractable mathematical treatment, making it suitable for illustrating the proposed approach. For larger pools or more general loss distributions than those considered in Section~\ref{sec:Example for pool with two participants}, solving the optimization problem generally requires numerical methods.

\subsection{Example for a pool with two participants}
\label{sec:Example for pool with two participants}

As previously noted in Section~\ref{NumericalIllustrations-Section5-Example1}, when participants' losses are Exponentially distributed, the pooled ruin probabilities admit closed-form expressions. This provides a tractable baseline for the design problem \eqref{Pareto-OptimalityforRuinProbabilities-Section5-Equation1}. We now illustrate this point in the simple case of a pool with two participants.

Consider a pool of two participants, $n=2$, with Exponentially distributed losses. The claim frequency intensities are $\lambda_1$ and $\lambda_2$, and the claim severity rates are $\alpha_1$ and $\alpha_2$. The corresponding mean claim sizes are $b_1=\frac{1}{\alpha_1}$ and $b_2=\frac{1}{\alpha_2}$.
Let
\begin{align}
\Avec
=
\begin{pmatrix}
a_{1,1} & a_{1,2}\\
a_{2,1} & a_{2,2}
\end{pmatrix}.
\label{Pareto-OptimalityforRuinProbabilities-Section5-Equation3}
\end{align}
The actuarial fairness condition gives
\[
\lambda_1b_1
=
a_{1,1}\lambda_1b_1+a_{1,2}\lambda_2b_2,
\Rightarrow
a_{1,2}
=
\frac{\lambda_1b_1}{\lambda_2b_2}
(1-a_{1,1}).
\]
The full-allocation property gives $a_{2,1}=1-a_{1,1}$ and $a_{2,2}=1-a_{1,2}$.
Thus, all entries of $\Avec$ can be expressed in terms of a single transfer ratio $a:=a_{1,1}$:
\begin{align}
\Avec(a)
=
\begin{pmatrix}
a
&
\displaystyle
\frac{\lambda_1b_1}{\lambda_2b_2}(1-a)
\\[2mm]
1-a
&
\displaystyle
1-\frac{\lambda_1b_1}{\lambda_2b_2}(1-a)
\end{pmatrix}.
\label{Pareto-OptimalityforRuinProbabilities-Section5-Equation4}
\end{align}
The requirement $0\leq a\leq 1$, together with actuarial fairness, full allocation, and capacity, leads to the admissible range
\begin{align}
\max\left\{
0,\,
1-\frac{\lambda_2b_2}{\lambda_1b_1},\,
1-\frac{b_2}{b_1},\,
1-\frac{\lambda_2}{\lambda_1}
\right\}
\leq a\leq 1.
\label{Pareto-OptimalityforRuinProbabilities-Section5-Equation4bis}
\end{align}

Under \eqref{Pareto-OptimalityforRuinProbabilities-Section5-Equation4}, the payment borne by participant $1$ at a generic pooled claim occurrence is an Exponential mixture with probability density function given by
\begin{align*}
f_{Z_1}(z)
&=
\frac{1}{\lambda_{\bullet}}
\left(
\frac{\alpha_1\lambda_1}{a}
\exp\left\{-\frac{\alpha_1}{a}z\right\}
+
\frac{\alpha_2\lambda_2^2b_2}{\lambda_1b_1(1-a)}
\exp\left\{
-\frac{\alpha_2\lambda_2b_2}{\lambda_1b_1(1-a)}z
\right\}
\right),
\qquad z\geq 0.
\end{align*}
Similarly, the density of the payment borne by participant $2$ is
\begin{align*}
f_{Z_2}(z)
&=
\frac{1}{\lambda_{\bullet}}
\left(
\frac{\alpha_1\lambda_1}{1-a}
\exp\left\{-\frac{\alpha_1}{1-a}z\right\}\right.\\
&\left.+
\frac{\alpha_2\lambda_2^2b_2}
{\lambda_2b_2-\lambda_1b_1(1-a)}
\exp\left\{
-\frac{\alpha_2\lambda_2b_2}
{\lambda_2b_2-\lambda_1b_1(1-a)}z
\right\}
\right),
\qquad z\geq 0.
\end{align*}
The corresponding moment generating functions are
\begin{align}
M_{Z_1}(r)
&=
\frac{1}{\lambda_{\bullet}}
\left(
\frac{\alpha_1\lambda_1}{\alpha_1-ar}
+
\frac{\alpha_2\lambda_2^2b_2}
{\alpha_2\lambda_2b_2-\lambda_1b_1(1-a)r}
\right),
\notag\\
&\hspace{35mm}
 r<
\min\left\{
\frac{\alpha_1}{a},
\frac{\alpha_2\lambda_2b_2}{\lambda_1b_1(1-a)}
\right\},
\label{Pareto-OptimalityforRuinProbabilities-Section5-Equation44}
\\
M_{Z_2}(r)
&=
\frac{1}{\lambda_{\bullet}}
\left(
\frac{\alpha_1\lambda_1}{\alpha_1-(1-a)r}
+
\frac{\alpha_2\lambda_2^2b_2}
{\alpha_2\lambda_2b_2-
\left(\lambda_2b_2-\lambda_1b_1(1-a)\right)r}
\right),
\notag\\
&\hspace{35mm}
 r<
\min\left\{
\frac{\alpha_1}{1-a},
\frac{\alpha_2\lambda_2b_2}
{\lambda_2b_2-\lambda_1b_1(1-a)}
\right\}.
\notag
\end{align}

Using the classical expression for the infinite-time ruin probability with a mixture of Exponential claim sizes, see Equation~(13.6.13) in Bowers et al.~(1997), the ruin probabilities can be written as finite sums of exponentials:
\[
\psi^{\mathrm{pool}}_i(\kappa_i;\Avec(a))
=
\sum_{k=1}^{2}
C_{i,k}(a)
\exp\{-r_{i,k}(a)\kappa_i\},
\qquad i=1,2,
\]
where the coefficients $C_{i,k}(a)$ and adjustment roots $r_{i,k}(a)$ are obtained by partial fraction decomposition from $M_{Z_i}$. More explicitly, for participant $1$, set
\[
p=\frac{\lambda_1}{\lambda_{\bullet}},
\qquad
k_{1,1}=\frac{a}{\alpha_1},
\qquad
k_{1,2}=
\frac{\lambda_1b_1(1-a)}{\alpha_2\lambda_2b_2},
\]
so that
\[
\Esp[Z_1]
=
pk_{1,1}+(1-p)k_{1,2}
=
\frac{\lambda_1}{\alpha_1\lambda_{\bullet}}.
\]
The two roots $r_{1,1}(a)$ and $r_{1,2}(a)$ are the roots of
\[
P_1(r)
=
(1+\eta)\Esp[Z_1](1-k_{1,1}r)(1-k_{1,2}r)
-\Esp[Z_1]
+k_{1,1}k_{1,2}r.
\]
The associated constants are obtained from the residue formula. Rewriting $P_1(r)$ as follows
\[
P_1(r)=c^{P}_{1,2}r^2+c^{P}_{1,1}r+c^{P}_{1,0},
\]
then
\[
B_{1,\ell}
=
-
\frac{
\eta\left(\Esp[Z_1]-k_{1,1}k_{1,2}r_{1,\ell}\right)
}
{
(1+\eta)P_1'(r_{1,\ell})
},
\qquad
C_{1,\ell}
=
\frac{B_{1,\ell}}{r_{1,\ell}},
\qquad
\ell=1,2.
\]

\begin{figure}[H]
    \begin{subfigure}[b]{0.5\linewidth}
          \includegraphics[width=7.5cm, height=7.5cm]{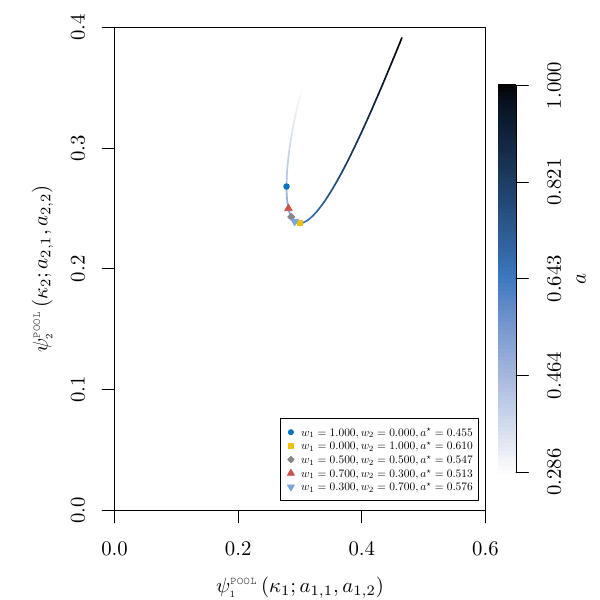}
        \caption{}
          \label{Pareto-OptimalityforRuinProbabilities-Section5-Figure1-a}
    \end{subfigure}
    \begin{subfigure}[b]{0.5\linewidth}
          \includegraphics[width=7.5cm, height=7.5cm]{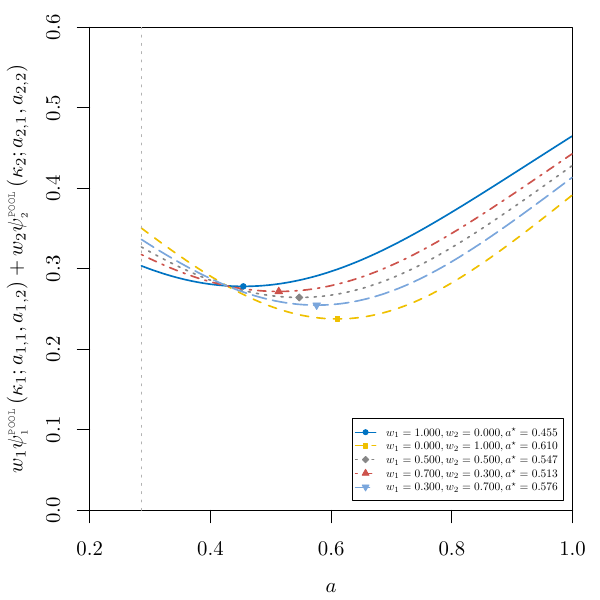}
        \caption{}
          \label{Pareto-OptimalityforRuinProbabilities-Section5-Figure1-b}
    \end{subfigure}
    \caption{\textcolor{black}{Weighted-sum optimal transfer ratios $a^{\star}$ for a pool of two individuals with initial reserves $\kappa_{1}=\kappa_{2}=3$. The model parameters are $\eta=2/5$, $\lambda_{1}=0.5$, $\lambda_{2}=0.6$, and $Y_{i,k}\sim\operatorname{Exponential}(\alpha_i)$ for $i=1,2$, with $\alpha_{1}=0.5$ and $\alpha_{2}=0.7$. The optimal transfer ratios are computed under five different weight scenarios. Plot (a) shows $\psi^{\mathrm{pool}}_{1}(\kappa_{1};a_{1,1},a_{1,2})$ versus $\psi^{\mathrm{pool}}_{2}(\kappa_{2};a_{2,1},a_{2,2})$ for varying values of $a$, with markers indicating the weighted-sum optimal transfer ratio $a^{\star}$ for each scenario. Plot (b) shows the objective function \eqref{Pareto-OptimalityforRuinProbabilities-Section5-Equation11} as a function of $a$, with markers indicating the corresponding minimizers $a^{\star}$. The cases with one zero weight are included as single-participant benchmarks; the Pareto-efficiency interpretation applies to minimizers obtained with strictly positive weights.}}
    \label{Pareto-OptimalityforRuinProbabilities-Section5-Figure1}
\end{figure}

The same calculation applies to participant $2$ with
\[
k_{2,1}=\frac{1-a}{\alpha_1},
\hspace{3mm}
k_{2,2}
=
\frac{\lambda_2b_2-\lambda_1b_1(1-a)}
{\alpha_2\lambda_2b_2},
\text{ and }
\Esp[Z_2]
=
\frac{\lambda_2}{\alpha_2\lambda_{\bullet}}.
\]

Consequently, in the two-participant Exponential case, the weighted-sum design problem reduces to the one-dimensional problem
\begin{align}
\min_{a}
\Bigg\{
&w_1
\left(
C_{1,1}(a)e^{-r_{1,1}(a)\kappa_1}
+
C_{1,2}(a)e^{-r_{1,2}(a)\kappa_1}
\right)
\notag\\
&\quad+
 w_2
\left(
C_{2,1}(a)e^{-r_{2,1}(a)\kappa_2}
+
C_{2,2}(a)e^{-r_{2,2}(a)\kappa_2}
\right)
\Bigg\},
\label{Pareto-OptimalityforRuinProbabilities-Section5-Equation11}
\end{align}
subject to the admissible range \eqref{Pareto-OptimalityforRuinProbabilities-Section5-Equation4bis}. We denote by $a^\star$ a global minimizer of \eqref{Pareto-OptimalityforRuinProbabilities-Section5-Equation11} and refer to it as the weighted-sum optimal transfer ratio.

Figure~\ref{Pareto-OptimalityforRuinProbabilities-Section5-Figure1} illustrates the behavior of $a^\star$ under different weight scenarios. The cases $w_1=1,w_2=0$ and $w_1=0,w_2=1$ correspond to single-participant benchmark objectives, whereas strictly positive weights correspond to genuine multi-objective scalarizations. As the relative weight assigned to a participant increases, the optimal transfer ratio moves toward the value that is most favorable to that participant. Figure~\ref{Pareto-OptimalityforRuinProbabilities-Section5-Figure1-a} displays the attainable pairs of ruin probabilities, while Figure~\ref{Pareto-OptimalityforRuinProbabilities-Section5-Figure1-b} shows the corresponding weighted objective functions as functions of $a$.

}

\section{Conclusion}
\label{sec:Conclusion}

Community-based insurance schemes (CBIs) play a crucial role in many resource-constrained communities where the availability of conventional insurance products is limited. Indeed, CBIs are prevalent in low-income communities and serve as one of the most common mechanisms for mitigating risk. This article examined the benefits of joining CBIs, focusing on whether pooling reduces individual ruin probabilities and makes participation preferable to remaining outside these risk-sharing arrangements.

Recognizing that simple and transparent risk mitigation strategies are essential in communities of this kind, this article focuses on CBIs with linear risk-sharing rules, in which each participant contributes a fraction of the loss suffered by the pool at the time it occurs. Such rules are widely regarded as easier to interpret and implement in practice, making them a natural choice for reducing complexity while enhancing understanding and trust among community members.

The analysis developed in this paper shows that proportional risk sharing at claim occurrence time can reduce the infinite-time ruin probability of every participant in a community-based insurance pool, provided that the allocation rule satisfies full allocation, actuarial fairness, and the capacity constraint, and that claim severities belong to a common scale family. Under these assumptions, pooling improves individual solvency not by lowering the expected loss borne by each participant, but by reducing the dispersion of the claim stream faced inside the pool. In this sense, the benefit of participation comes from a genuine risk-pooling effect. The numerical illustrations confirm this theoretical result, while the discussion in Section~\ref{sec:Discussions of the assumptions} clarifies its scope by showing, on the one hand, that the scale-family assumption is sufficient but not necessary in full strength and, on the other hand, that violations of the capacity constraint may destroy the uniform ruin-reduction effect. Overall, the paper highlights that transparent linear sharing rules can improve individual solvency, but only when they are designed so as to balance fairness and protection against excessive bilateral transfers.

It is important to acknowledge some limitations of our study. In its current form, the conditions under which contributions keep the CBI beneficial are identified only when individual losses belong to the same scale family. Furthermore, individual accounts are modeled using the simplest form of the classical Cramér–Lundberg risk process. Alternative models, such as those discussed by Asmussen and Albrecher (2010) and Mandjes and Boxma (2023) may offer a more realistic representation and yield additional insights in specific situations.

{\color{black} One of the drawbacks of the model introduced in Section \ref{sec:Loss model} is the assumption of independent compound Poisson processes across participants in the pool. In practice, community members are typically exposed to risks exhibiting a certain degree of dependence. In Appendix \ref{sec:Incorporating dependencies}, we introduce a common-shock model that reduces to some extent the limitations of this assumption. The main results of this paper are extended to this particular dependence setting. Despite the independence assumption, the model introduced in Section \ref{sec:Loss model} captures the key features of the risk-sharing mechanism and provides valuable insights into its implications.

In addition to finite-time ruin probabilities, an important avenue for future research would be to assess the effect of pooling on other risk measures commonly studied in the classical Cramér–Lundberg risk process. These include the time to ruin, the surplus immediately prior to ruin, the severity of ruin, the time in red (the time spent below zero from the first ruin to the first time the surplus trajectory upcrosses the level zero) and the amount of the claim causing ruin (see Gerber et al. (1987), Dufresne and Gerber (1988), Egídio dos Reis (1993), Picard (1994), Gerber and Shiu (1998) and Loisel (2005) for instance).}

Finally, future research could also explore subsidized CBIs through the lens of ruin theory (see, e.g., Flores-Contró et al. (2025)), to assess how subsidies affect both the benefits and the conditions governing contributions. This is particularly relevant given that, over the past decades, governments in many countries have promoted CBIs and fostered community participation through subsidies. See Zhang and Wang (2008) for an example of a subsidized community-based health insurance (CBHI) scheme in China and Parmar et al. (2012) for another example in Burkina \textcolor{black}{Faso}, where a subsidy of 50\% of the premium is offered to poor households. 

\section*{Github 
repository}
A repository containing the R scripts used to generate the numerical illustrations presented in this paper is available on GitHub at: \url{https://github.com/josemiguelflorescontrouclouvain/CommunityBasedInsuranceintheCompoundPoissonSurplusModel.git}.

\section*{Funding} \label{Funding-Section} 

Michel Denuit and José Miguel Flores-Contró gratefully acknowledge funding from the FWO and F.R.S.-FNRS under the Excellence of Science (EOS) programme, project ASTeRISK (40007517).

{\color{black} \section*{Acknowledgements}  \label{Acknowledgements-Section}}

\textcolor{black}{The authors are grateful to the anonymous referees for their numerous helpful and constructive comments on an earlier version of this study.}

\section*{References}

\begin{enumerate}[label={[\arabic*]}]

\item \textcolor{black}{Abdikerimova, S., T. J. Boonen, and R. Feng (2024). Multiperiod Peer-to-Peer Risk Sharing. Journal of Risk and Insurance 91 (4), 943–982.}

\item Abdikerimova, S. and R. Feng (2022). Peer-to-Peer Multi-Risk Insurance and Mutual Aid. European Journal of Operational Research 299 (2), 735–749.

\item \textcolor{black}{Albrecher, H., P. Azcue, and N. Muler (2017). Optimal Dividend Strategies for Two Collaborating Insurance Companies. Advances in Applied Probability 49 (2), 515–548.}

%\item \textcolor{black}{Ali, M. J. and K. Pärna (2022). Ruin probability for merged risk processes with correlated arrivals. In A. Malyarenko, Y. Ni, M. Rančić, and S. Silvestrov (Eds.), Stochastic Processes, Statistical Methods, and Engineering Mathematics, Cham, pp. 15–32. Springer International Publishing.}

\item Asmussen, S. and H. Albrecher (2010). Ruin Probabilities. Singapore: World Scientific.

\item \textcolor{black}{Boonen, T. J. and Z. Zhou (2026). Robust Peer-to-Peer Risk Sharing in Continuous Time. Available at SSRN.}

%\item Bhattamishra, R. and C. B. Barrett (2010). Community-Based Risk Management Arrangements: A Review. World Development 38 (7), 923–932.

\item Bowers, N. L., H. U. Gerber, J. C. Hickman, and C. J. Nesbitt (1997). Actuarial Mathematics. Illinois, United States of America: Society of Actuaries.

\item Carrin, G., M.-P. Waelkens, and B. Criel (2005). Community-Based Health Insurance in Developing Countries: A Study of its Contribution to the Performance of Health Financing Systems. Tropical medicine \& international health 10 (8), 799–811.

%\item Chantarat, S., A. G. Mude, C. B. Barrett, and M. R. Carter (2013). Designing Index-Based Livestock Insurance for Managing Asset Risk in Northern Kenya. Journal of Risk and Insurance 80 (1), 205–237.

\item Charpentier, A. and P. Ratz (2025). Linear Risk Sharing on Networks. Working Paper.

%\item Choong, J. J., D. Wagenaar, M. L. Rabonza, P. Hamel, A. D. Switzer, and D. Lallemant (2025). Shared Hazards, Unequal Outcomes: Income-Driven Inequities in Disaster Risk. npj Natural Hazards 2 (1), 33.

%\item Churchill, C. (2007). Insuring the Low-Income Market: Challenges and Solutions for Commercial Insurers. The Geneva Papers on Risk and Insurance - Issues and Practice 32 (3), 401–412.

\item Cronk, L. and A. Aktipis (2021). Design Principles for Risk-Pooling Systems. Nature Human Behaviour 5 (7), 825–833.

\item Denuit, M. (2019). Size-Biased Transform and Conditional Mean Risk Sharing. ASTIN Bulletin 49 (3), 591–617.

\item Denuit, M. and J. Dhaene (2012). Convex Order and Comonotonic Conditional Mean Risk Sharing. Insurance: Mathematics and Economics 51 (2), 265–270.

\item Denuit, M., J. Dhaene, M. Goovaerts, and R. Kaas (2005). Actuarial Theory for Dependent Risks: Measures, Orders and Models. Singapore: John Wiley \& Sons, Ltd.

\item Denuit, M., J. Dhaene, and C. Y. Robert (2022). Risk-Sharing Rules and their Properties, with Applications to Peer-to-Peer Insurance. Journal of Risk and Insurance 89 (3), 615–667.

\item Denuit, M. and C. Y. Robert (2023). Conditional Mean Risk Sharing of Losses at Occurrence Time in the Compound Poisson Surplus Model. Insurance: Mathematics and Economics 112, 23–32.

\item Denuit, M. and C. Y. Robert (2024). Conditional Mean Risk Sharing of Independent Discrete Losses in Large Pools. Methodology and Computing in Applied Probability 26 (4), 36.

\item Dercon, S. (2002). Income Risk, Coping Strategies, and Safety Nets. The World Bank Research Observer 17 (2), 141–166.

\item Dercon, S., J. De Weerdt, T. Bold, and A. Pankhurst (2006). Group-Based Funeral Insurance in Ethiopia and Tanzania. World Development 34 (4), 685–703.

\item Dhaene, J. and M. Denuit (1999). The Safest Dependence Structure Among Risks. Insurance: Mathematics and Economics 25 (1), 11–21.

\item Dhaene, J., C. Y. Robert, K. C. Cheung, and M. Denuit (2026). An Axiomatic Characterization of the Quantile Risk-Sharing Rule. Scandinavian Actuarial Journal 2026 (1), 1–20.

\item Dror, D. M., S. A. S. Hossain, A. Majumdar, T. L. Pérez Koehlmoos, D. John, and P. K. Panda (2016). What Factors Affect Voluntary Uptake of Community-Based Health Insurance Schemes in Low- and Middle-Income Countries? A Systematic Review and Meta-Analysis. PLOS ONE 11 (8), 1–31.

\item \textcolor{black}{Dufresne, F. and H. U. Gerber (1988). The Surpluses Immediately Before and at Ruin, and the Amount of the Claim Causing Ruin. Insurance: Mathematics and Economics 7 (3), 193–199.}

\item \textcolor{black}{Egídio dos Reis, A. (1993). How Long is the Surplus Below Zero? Insurance: Mathematics and Economics 12 (1), 23–38.}

\item Ekman, B. (2004). Community-Based Health Insurance in Low-Income Countries: A Systematic Review of the Evidence. Health Policy Plan 19 (5), 249–70.

\item Eze, P., S. Ilechukwu, and L. O. Lawani (2023). Impact of Community-Based Health Insurance in Low- and Middle-Income Countries: A Systematic Review and Meta-Analysis. PLOS ONE 18 (6), 1–47.

\item Fadlallah, R., F. El-Jardali, N. Hemadi, R. Z. Morsi, C. A. Abou Samra, A. Ahmad, K. Arif, L. Hishi, G. Honein-AbouHaidar, and E. A. Akl (2018). Barriers and Facilitators to Implementation, Uptake and Sustainability of Community-Based Health Insurance Schemes in Low- and Middle-Income Countries: A Systematic Review. International Journal for Equity in Health 17 (1), 13.

\item Feng, R. (2023). Decentralized Insurance: Technical Foundation of Business Models. Springer.	

\item Fitzsimons, E., B. Malde, and M. Vera-Hernández (2018). Group Size and the Efficiency of Informal Risk Sharing. The Economic Journal 128 (612), F575–F608.

\item Flores-Contró, J. M., K. Henshaw, S.-H. Loke, S. Arnold, and C. Constantinescu (2025). Subsidizing Inclusive Insurance to Reduce Poverty. North American Actuarial Journal 29 (1), 44–73.

\item \textcolor{black}{Gerber, H. U., M. J. Goovaerts, and R. Kaas (1987). On the Probability and Severity of Ruin. ASTIN Bulletin 17 (2), 151–163.}

\item \textcolor{black}{Gerber, H. U. and E. S. Shiu (1998). On the Time Value of Ruin. North American Actuarial Journal 2 (1), 48–72.}

\item \textcolor{black}{Gu, J.-W., M. Steffensen, and H. Zheng (2018). Optimal Dividend Strategies of Two Collaborating Businesses in the Diffusion Approximation Model. Mathematics of Operations Research 43 (2), 377–398.}

\item \textcolor{black}{Hanbali, H., H. Warnakulasooriya, and J. W. Y. Leung (2025). Time-Varying Pareto Optimal Risk Sharing for Annuities. ASTIN Bulletin 55 (3), 695–720.}

%\item \textcolor{black}{Kreinin, A. (2016). Correlated Poisson Processes and Their Applications in Financial Modeling. In Financial Signal Processing and Machine Learning, Chapter 9, pp. 191–232. John Wiley \& Sons, Ltd.}

\item Lemay-Boucher, P. (2012). Insurance for the Poor: the Case of Informal Insurance Groups in Benin. The Journal of Development Studies 48 (9), 1258–1273.

\item Levantesi, S. and G. Piscopo (2022). Mutual Peer-to-Peer Insurance: The Allocation of Risk. Journal of Co-operative Organization and Management 10 (1), 100154.

\item \textcolor{black}{Loisel, S. (2005). Differentiation of Some Functionals of Risk Processes, and Optimal Reserve Allocation. Journal of Applied Probability 42 (2), 379–392.}

\item Mandjes, M. and O. Boxma (2023). The Cramér–Lundberg Model and Its Variants: A Queueing Perspective. Cham, Switzerland: Springer Nature Switzerland AG.

\item Mebratie, A. D., R. Sparrow, G. Alemu, and A. S. Bedi (2013). Community-Based Health Insurance Schemes: A Systematic Review. Working Paper.

\item Parmar, D., A. Souares, M. de Allegri, G. Savadogo, and R. Sauerborn (2012). Adverse Selection in a Community-Based Health Insurance Scheme in Rural Africa: Implications for Introducing Targeted Subsidies. BMC Health Services Research 12 (1), 181.

\item \textcolor{black}{Picard, P. (1994). On Some Measures of the Severity of Ruin in the Classical Poisson Model. Insurance: Mathematics and Economics 14 (2), 107–115.}

\item Rolski, T., H. Schmidli, V. Schmidt, and J. Teugels (1999). Stochastic Processes for Insurance and Finance. West Sussex, England: John Wiley \& Sons Ltd.

\item Saha, S. K. and J. Qin (2023). Financial Inclusion and Poverty Alleviation: An Empirical Examination. Economic Change and Restructuring 56 (1), 409–440.

\item Schumacher, J.M. (2018). Linear Versus Nonlinear Allocation Rules in Risk Sharing Under Financial Fairness. ASTIN Bulletin 48, 995-1024.

\item \textcolor{black}{Shaked, M. and J. G. Shanthikumar (2007). Stochastic Orders. Springer.}

%\item Winsemius, H. C., B. Jongman, T. I. Veldkamp, S. Hallegatte, M. Bangalore, and P. J. Ward (2018). Disaster Risk, Climate Change, and Poverty: Assessing the Global Exposure of Poor People to Floods and Droughts. Environment and Development Economics 23 (3), 328–348.

%\item Woolsey Biggart, N. (2001). Banking on Each Other Banking: The Situational Logic of Rotating Savings and Credit Association. Advances in Qualitative Organization Research 3, 129–153.

\item \textcolor{black}{Tao, C., Y. Shen, and T. K. Siu (2025). Pareto-Optimal Risk Exchange in a Continuous-Time Economy: Application to Target Benefit Pension. ASTIN Bulletin 55 (3), 615–643.}

\item Yang, J. and W. Wei (2025). On the Optimality of Linear Residual Risk Sharing. ASTIN Bulletin 55 (3), 514–536.

\item Zhang, L. and H. Wang (2008). Dynamic Process of Adverse Selection: Evidence from a Subsidized Community-Based Health Insurance in Rural China. Social Science \& Medicine 67 (7), 1173–1182.

\end{enumerate}

\newpage

% Appendices come here

\section*{\textcolor{black}{Appendices}}
\addcontentsline{toc}{section}{Appendices}

\renewcommand{\thesubsection}{\Alph{subsection}}
\renewcommand{\thesubsubsection}{\thesubsection.\arabic{subsubsection}}
\setcounter{subsection}{0}

\numberwithin{equation}{subsubsection}

% Number propositions by appendix subsection: A.1, A.2, ...
\makeatletter
\@addtoreset{proposition}{subsection}
\makeatother
\renewcommand{\theproposition}{\thesubsection.\arabic{proposition}}

{\color{black}
\subsection{Incorporating dependence through proportional common shocks} \label{sec:Incorporating dependencies}

The purpose of this appendix is to discuss a simple extension of the model in Section~\ref{sec:Loss model} in which the individual risk processes are no longer mutually independent. The dependence is introduced through a common-shock component affecting all members of the pool simultaneously. The model is deliberately kept close to the scale-family framework used in Proposition~\ref{ImpactofPooling-Section4-Proposition1}, in order to show that the ruin-reducing effect of proportional pooling can still be obtained under a specific form of dependence.

\subsubsection{Individual accounts with proportional common shocks}

Let $\{N_{i,t},t\geq 0\}$, $i=1,\ldots,n$, be the idiosyncratic claim-counting processes introduced in Section~\ref{sec:Loss model}. In addition, let $\{P_t,t\geq 0\}$, be a Poisson process with intensity $\lambda^{\mathrm{cs}}>0$, independent of the idiosyncratic counting processes. The process $\{P_t,t\geq 0\}$ records the occurrence times of systemic events affecting the whole community.

We assume that idiosyncratic losses belong to a common scale family. More precisely, there exists a non-negative random variable $W$ such that $\Esp[W]=1$ and
\begin{align}
Y_{i,k}\eqD b_i W_{i,k},
\qquad i=1,\ldots,n,\quad k\geq 1,
\label{eq:app-cs-idiosyncratic-scale}
\end{align}
where $W_{i,k}$ are independent copies of $W$.

For the systemic component, we assume that all participants are hit by the same underlying shock, but with participant-specific exposure coefficients. Thus, at the occurrence time of the $k$th systemic event, participant $i$ suffers the loss
\begin{align}
Y^{\mathrm{cs}}_{i,k}=b^{\mathrm{cs}}_i W^{\mathrm{cs}}_k,
\qquad i=1,\ldots,n,\quad k\geq 1,
\label{eq:app-cs-proportional-systemic-loss}
\end{align}
where $b^{\mathrm{cs}}_i>0$ is the systemic exposure of participant $i$, and where $W^{\mathrm{cs}}_1,W^{\mathrm{cs}}_2,\ldots$ are independent copies of a non-negative random variable $W^{\mathrm{cs}}$ satisfying $\Esp[W^{\mathrm{cs}}]=1$. In the most direct scale-family extension of Proposition~\ref{ImpactofPooling-Section4-Proposition1}, one may take
\[
    W^{\mathrm{cs}}\eqD W.
\]
This means that idiosyncratic and systemic claim severities have the same normalized shape, while their magnitudes differ through scale parameters only.

Under this common-shock model, the cumulative claim amount reported by participant $i$ up to time $t$ is
\begin{align}
S^{\mathrm{cs}}_{i,t}
=
\sum_{k=1}^{N_{i,t}}Y_{i,k}
+
\sum_{k=1}^{P_t}Y^{\mathrm{cs}}_{i,k},
\qquad t\geq 0.
\label{eq:app-cs-standalone-loss}
\end{align}
The corresponding stand-alone surplus process is
\begin{align}
V^{\mathrm{cs}}_{i,t}
=
c^{\mathrm{cs}}_i t-S^{\mathrm{cs}}_{i,t}+\kappa_i,
\qquad t\geq 0,
\label{eq:app-cs-standalone-surplus}
\end{align}
where
\begin{align}
c^{\mathrm{cs}}_i
=
(1+\eta)\Esp[S^{\mathrm{cs}}_{i,1}]
=
(1+\eta)\left(\lambda_i b_i+\lambda^{\mathrm{cs}}b^{\mathrm{cs}}_i\right),
\qquad \eta>0.
\label{eq:app-cs-premium}
\end{align}

The processes $\{S^{\mathrm{cs}}_{i,t},t\geq 0\}$ are dependent because they share the same systemic counting process $\{P_t,t\geq 0\}$ and the same systemic shock sizes $W^{\mathrm{cs}}_k$. For instance, if $N^{\mathrm{cs}}_{i,t}=N_{i,t}+P_t$ denotes the total number of claim occurrences affecting participant $i$, then
\[
    \operatorname{Corr}\left[N^{\mathrm{cs}}_{i,t},N^{\mathrm{cs}}_{j,t}\right]
    =
    \frac{\lambda^{\mathrm{cs}}}
    {\sqrt{\lambda_i+\lambda^{\mathrm{cs}}}\sqrt{\lambda_j+\lambda^{\mathrm{cs}}}},
    \qquad i\neq j.
\]
This is a standard common-shock source of dependence across participants.

\subsubsection{Pooled fund and occurrence-time representation}

Let

\[
    \lambda_\bullet=\sum_{j=1}^n\lambda_j,
    \qquad
    \Lambda=\lambda_\bullet+\lambda^{\mathrm{cs}}.
\]
The total occurrence process at the pool level is
\[
    N^{\mathrm{cs}}_t
    =
    \sum_{j=1}^n N_{j,t}+P_t,
    \qquad t\geq 0,
\]
which is a Poisson process with intensity $\Lambda$. Let

\[
    T^{\mathrm{cs}}_k=\inf\{t\geq 0:N^{\mathrm{cs}}_t=k\}
\]
be the $k$th occurrence time at the pool level.

At each occurrence time, the triggering event is either an idiosyncratic claim from one participant or a systemic event affecting all participants. Let

\[
    I^{\mathrm{cs}}_{0,k}, I^{\mathrm{cs}}_{1,k},\ldots,I^{\mathrm{cs}}_{n,k}
\]
be the corresponding indicators, where $I^{\mathrm{cs}}_{0,k}=1$ means that the $k$th occurrence is systemic, while $I^{\mathrm{cs}}_{j,k}=1$ means that it is an idiosyncratic claim generated by participant $j$. Thus, exactly one of these indicators is equal to one, and

\[
    \Prob[I^{\mathrm{cs}}_{j,k}=1]=\frac{\lambda_j}{\Lambda},
    \qquad j=1,\ldots,n,
    \qquad
    \Prob[I^{\mathrm{cs}}_{0,k}=1]=\frac{\lambda^{\mathrm{cs}}}{\Lambda}.
\]

The aggregate claim amount at the $k$th occurrence time is then
\begin{align}
X^{\mathrm{cs}}_k
=
\sum_{j=1}^n I^{\mathrm{cs}}_{j,k}Y_{j,N_{j,T^{\mathrm{cs}}_k}}
+
I^{\mathrm{cs}}_{0,k}
\sum_{j=1}^n b^{\mathrm{cs}}_j W^{\mathrm{cs}}_{P_{T^{\mathrm{cs}}_k}}.
\label{eq:app-cs-aggregate-claim}
\end{align}
Consequently,

\[
    S^{\mathrm{cs}}_t
    =
    \sum_{j=1}^n S^{\mathrm{cs}}_{j,t}
    \overset{\mathrm{d}}{=}
    \sum_{k=1}^{N^{\mathrm{cs}}_t}X^{\mathrm{cs}}_k,
    \qquad t\geq 0.
\]

The aggregate surplus process of the pooled fund is therefore
\begin{align}
V^{\mathrm{cs}}_t
=
c^{\mathrm{cs}}t-S^{\mathrm{cs}}_t+\kappa,
\qquad
c^{\mathrm{cs}}=\sum_{i=1}^n c^{\mathrm{cs}}_i,
\qquad
\kappa=\sum_{i=1}^n\kappa_i.
\label{eq:app-cs-aggregate-surplus}
\end{align}

\subsubsection{Proportional sharing under common shocks}

Let $\Avec=(a_{i,j})_{1\leq i,j\leq n}$ be the allocation matrix introduced in Section~\ref{sec:Linear risk sharing at occurrence time}. If the $k$th occurrence is an idiosyncratic claim generated by participant $j$, then participant $i$ pays the amount $  a_{i,j}Y_{j,N_{j,T^{\mathrm{cs}}_k}}$.

If the $k$th occurrence is systemic, the original systemic loss profile is
\[
    \left(b^{\mathrm{cs}}_1,\ldots,b^{\mathrm{cs}}_n\right)W^{\mathrm{cs}}_{P_{T^{\mathrm{cs}}_k}},
\]
and the proportional sharing rule allocates to participant $i$ the amount
\[
    \sum_{j=1}^n a_{i,j}b^{\mathrm{cs}}_j W^{\mathrm{cs}}_{P_{T^{\mathrm{cs}}_k}}.
\]
Thus, the payment borne by participant $i$ at the $k$th occurrence time is
\begin{align}
Z^{\mathrm{cs}}_{i,k}
=
\sum_{j=1}^n
I^{\mathrm{cs}}_{j,k}a_{i,j}Y_{j,N_{j,T^{\mathrm{cs}}_k}}
+
I^{\mathrm{cs}}_{0,k}
\left(
\sum_{j=1}^n a_{i,j}b^{\mathrm{cs}}_j
\right)
W^{\mathrm{cs}}_{P_{T^{\mathrm{cs}}_k}}.
\label{eq:app-cs-pooled-payment}
\end{align}
The cumulative claim amount allocated to participant $i$ after pooling is
\[
    S^{\mathrm{cs-pool}}_{i,t}
    =
    \sum_{k=1}^{N^{\mathrm{cs}}_t}Z^{\mathrm{cs}}_{i,k},
    \qquad t\geq 0,
\]
and the corresponding surplus process is
\begin{align}
V^{\mathrm{cs-pool}}_{i,t}
=
c^{\mathrm{cs}}_i t-S^{\mathrm{cs-pool}}_{i,t}+\kappa_i,
\qquad t\geq 0.
\label{eq:app-cs-pooled-surplus}
\end{align}

We impose the following conditions on the allocation matrix. First, the full-allocation condition is
\begin{align}
\sum_{i=1}^n a_{i,j}=1,
\qquad j=1,\ldots,n.
\label{eq:app-cs-full-allocation}
\end{align}
Second, the idiosyncratic part is actuarially fair:
\begin{align}
\sum_{j=1}^n \lambda_j a_{i,j}b_j
=
\lambda_i b_i,
\qquad i=1,\ldots,n.
\label{eq:app-cs-idiosyncratic-fairness}
\end{align}
Third, the usual capacity constraint is imposed on idiosyncratic transfers:
\begin{align}
a_{i,j}b_j\leq b_i,
\qquad i,j=1,\ldots,n.
\label{eq:app-cs-capacity}
\end{align}
Finally, we assume that the allocation matrix preserves each participant's systemic exposure:
\begin{align}
\sum_{j=1}^n a_{i,j}b^{\mathrm{cs}}_j
=
b^{\mathrm{cs}}_i,
\qquad i=1,\ldots,n.
\label{eq:app-cs-systemic-preservation}
\end{align}
Equivalently, with $b^{\mathrm{cs}}=(b^{\mathrm{cs}}_1,\ldots,b^{\mathrm{cs}}_n)^\top$, condition~\eqref{eq:app-cs-systemic-preservation} reads
\[
    \Avec b^{\mathrm{cs}}=b^{\mathrm{cs}}.
\]
This condition means that the proportional sharing rule redistributes idiosyncratic losses, while it does not alter the systemic exposure borne by each participant. Under \eqref{eq:app-cs-idiosyncratic-fairness} and \eqref{eq:app-cs-systemic-preservation}, the expected loss rate borne by participant $i$ is unchanged by pooling:
\[
    \sum_{j=1}^n \lambda_j a_{i,j}b_j
    +
    \lambda^{\mathrm{cs}}
    \sum_{j=1}^n a_{i,j}b^{\mathrm{cs}}_j
    =
    \lambda_i b_i+\lambda^{\mathrm{cs}}b^{\mathrm{cs}}_i.
\]

\subsubsection{Ruin comparison}

Define the infinite-time ruin probabilities
\[
    \psi_i^{\mathrm{cs}}(\kappa_i)
    =
    \Prob\left[
        \inf_{t\geq 0}V^{\mathrm{cs}}_{i,t}<0
        \mid V^{\mathrm{cs}}_{i,0}=\kappa_i
    \right],
\]
and
\[
    \psi_i^{\mathrm{cs-pool}}(\kappa_i)
    =
    \Prob\left[
        \inf_{t\geq 0}V^{\mathrm{cs-pool}}_{i,t}<0
        \mid V^{\mathrm{cs-pool}}_{i,0}=\kappa_i
    \right].
\]
The following result shows the effect of the systematic risk on ruin probabilities.

\begin{proposition}
\label{prop:app-cs-ruin-reduction}
Assume that \eqref{eq:app-cs-idiosyncratic-scale}--\eqref{eq:app-cs-proportional-systemic-loss} hold. Suppose moreover that the allocation matrix $\Avec$ satisfies the full-allocation condition \eqref{eq:app-cs-full-allocation}, the idiosyncratic actuarial-fairness condition \eqref{eq:app-cs-idiosyncratic-fairness}, the capacity constraint \eqref{eq:app-cs-capacity}, and the systemic-exposure preservation condition \eqref{eq:app-cs-systemic-preservation}. Then, for every participant $i=1,\ldots,n$ and every initial reserve $\kappa_i\geq 0$,
\[
    \psi_i^{\mathrm{cs-pool}}(\kappa_i)
    \leq
    \psi_i^{\mathrm{cs}}(\kappa_i).
\]

\end{proposition}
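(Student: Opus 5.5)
Both the stand-alone and the pooled surplus of participant $i$ will be represented on the common occurrence process $\{N^{\mathrm{cs}}_t,t\ge0\}$, which is Poisson with intensity $\Lambda$. The comparison then reduces to one between the single-event claim sizes, exactly as for Proposition \ref{ImpactofPooling-Section4-Proposition1}.

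\textbf{Step 1: two compound Poisson processes with the same intensity.} On the occurrence process, the stand-alone loss of participant $i$ at a generic occurrence is
\[
Y'^{\mathrm{cs}}_i = I^{\mathrm{cs}}_{i}\, b_i W + I^{\mathrm{cs}}_0\, b^{\mathrm{cs}}_i W^{\mathrm{cs}},
\]
which is zero when another participant triggers the event. The pooled loss $Z^{\mathrm{cs}}_i$ is given by \eqref{eq:app-cs-pooled-payment}. Under \eqref{eq:app-cs-systemic-preservation}, its systemic part equals $I^{\mathrm{cs}}_0 b^{\mathrm{cs}}_i W^{\mathrm{cs}}$. Both processes $S^{\mathrm{cs}}_{i,t}$ and $S^{\mathrm{cs-pool}}_{i,t}$ are therefore compound Poisson with intensity $\Lambda$ and i.i.d. claim sizes $Y'^{\mathrm{cs}}_i$ and $Z^{\mathrm{cs}}_i$, respectively. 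The premium rate $c^{\mathrm{cs}}_i$ and the reserve $\kappa_i$ are the same in both models.

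\textbf{Step 2: reduction to convex order.} I will invoke the classical fact already used for Proposition \ref{ImpactofPooling-Section4-Proposition1}. If two compound Poisson surplus processes share the intensity, premium rate and initial reserve, and their claim sizes satisfy $Z\lcx Y$, then their infinite-time ruin probabilities satisfy $\psi_Z\le\psi_Y$. This follows from the Pollaczek--Khinchine representation: the integrated-tail (ladder-height) distributions inherit the increasing convex order, so the compound geometric sums are ordered in $\preceq_{\mathrm{icx}}$, and for geometric ladder sums this yields the survival-function comparison. It therefore suffices to show $Z^{\mathrm{cs}}_i\lcx Y'^{\mathrm{cs}}_i$.

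\textbf{Step 3: conditioning on the event type.} Both variables are mixtures over the type of the triggering event, with identical weights $\lambda_0/\Lambda=\lambda^{\mathrm{cs}}/\Lambda$ and $\lambda_j/\Lambda$. On the systemic component, the two conditional laws coincide and equal that of $b^{\mathrm{cs}}_i W^{\mathrm{cs}}$.

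On the idiosyncratic part, conditionally on a non-systemic event, the comparison is exactly the one established in the proof of Proposition \ref{ImpactofPooling-Section4-Proposition1}, now with weights $\lambda_j/\lambda_\bullet$. That is,
\[
a_{i,J}b_J W \lcx B_i b_i W, \qquad B_i\sim\mathrm{Bernoulli}(\lambda_i/\lambda_\bullet).
\]
This holds by \eqref{eq:app-cs-idiosyncratic-scale}, \eqref{eq:app-cs-idiosyncratic-fairness} and \eqref{eq:app-cs-capacity}, since the normalized scales $\theta_j=a_{i,J}b_J/b_i\in[0,1]$ by \eqref{Const_b}.

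Writing $\theta=a_{i,J}b_J/b_i$ and $B_i$ as above, both sides have the same mean, by the fairness condition \eqref{eq:app-cs-idiosyncratic-fairness}. Moreover, $\Esp[\theta\mid\cdots]$ can be realised as a mean-preserving spread $\theta = \Esp[B_i\mid\theta]$ in the following sense. Because $\theta\in[0,1]$, the law of $B_i$ is a dilation of the law of $\theta$; explicitly, take $B_i\mid\theta\sim\mathrm{Bernoulli}(\theta)$. Since $W$ is independent of this pair, Jensen's inequality conditional on $W$ gives $\theta W\lcx B_i W$. Multiplying by $b_i$ yields the claim.

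\textbf{Step 4: combining the components.} Convex order is closed under mixtures with common mixing weights, because $\Esp[g(\cdot)]$ is linear in the mixture. Mixing the idiosyncratic comparison with weight $\lambda_\bullet/\Lambda$ and the systemic identity with weight $\lambda^{\mathrm{cs}}/\Lambda$ therefore gives $Z^{\mathrm{cs}}_i\lcx Y'^{\mathrm{cs}}_i$. Step 2 then concludes the proof.

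\textbf{Main obstacle.} The delicate point is Step 2, the transfer from the claim-size convex order to the ruin-probability inequality for all $\kappa_i$. Here I would rely on the argument already invoked in the proof of Proposition \ref{ImpactofPooling-Section4-Proposition1}. The remaining steps are bookkeeping, in which the systemic-preservation condition is exactly what neutralizes the common shock.
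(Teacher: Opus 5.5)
Your argument is correct and follows the paper's proof: the same representation of both models on the common occurrence process with intensity $\Lambda$, the systemic components made identical by $\Avec b^{\mathrm{cs}}=b^{\mathrm{cs}}$, and a Bernoulli$(\theta)$ dilation that is the probabilistic form of the paper's bound $\Esp[g(\theta b_i W)]\le(1-\theta)g(0)+\theta\,\Esp[g(b_iW)]$ combined with fairness. One correction to your gloss in Step 2: $\preceq_{\mathrm{icx}}$ of the compound geometric sums would not by itself yield the survival-function comparison. The right reason is that, with equal means, the ladder-height tail equals $\Esp[(Z-x)_+]/\Esp[Z]$, so $Z\lcx Y$ orders the ladder heights in the usual stochastic order, and this order is preserved by compound geometric sums with the same parameter.
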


\begin{proof}
Fix $i\in{1,\ldots,n}$. We represent both the stand-alone and pooled models on the same occurrence process ${N^{\mathrm{cs}}_t,t\geq 0}$, with intensity $\Lambda=\lambda_\bullet+\lambda^{\mathrm{cs}}$.

For the stand-alone model, define the claim amount borne by participant $i$ at a generic occurrence time as
\[
    \widetilde Y^{\mathrm{cs}}_i
    =
    \begin{cases}
        b_i W, & \text{with probability } \lambda_i/\Lambda,\\
        b^{\mathrm{cs}}_i W^{\mathrm{cs}}, & \text{with probability } \lambda^{\mathrm{cs}}/\Lambda,\\
        0, & \text{with probability } (\lambda_\bullet-\lambda_i)/\Lambda.
    \end{cases}
\]
Then,
\[
    S^{\mathrm{cs}}_{i,t}
    \overset{\mathrm{d}}{=}
    \sum_{k=1}^{N^{\mathrm{cs}}_t}\widetilde Y^{\mathrm{cs}}_{i,k},
\]
where $\widetilde Y^{\mathrm{cs}}_{i,1},\widetilde Y^{\mathrm{cs}}_{i,2},\ldots$ are independent copies of $\widetilde Y^{\mathrm{cs}}_i$.

For the pooled model, the claim amount borne by participant $i$ at a generic occurrence time is distributed as
\[
    Z^{\mathrm{cs}}_i
    =
    \begin{cases}
        a_{i,j}b_j W, & \text{with probability } \lambda_j/\Lambda,\quad j=1,\ldots,n,\\
        \left(\sum_{j=1}^n a_{i,j}b^{\mathrm{cs}}_j\right) W^{\mathrm{cs}},
        & \text{with probability } \lambda^{\mathrm{cs}}/\Lambda.
    \end{cases}
\]
Using \eqref{eq:app-cs-systemic-preservation}, this becomes
\[
    Z^{\mathrm{cs}}_i
    =
    \begin{cases}
        a_{i,j}b_j W, & \text{with probability } \lambda_j/\Lambda,\quad j=1,\ldots,n,\\
        b^{\mathrm{cs}}_i W^{\mathrm{cs}},
        & \text{with probability } \lambda^{\mathrm{cs}}/\Lambda.
    \end{cases}
\]
Moreover, by \eqref{eq:app-cs-idiosyncratic-fairness},
\[
    \Esp[Z^{\mathrm{cs}}_i]
    =
    \frac{1}{\Lambda}
    \sum_{j=1}^n\lambda_j a_{i,j}b_j
    +
    \frac{\lambda^{\mathrm{cs}}}{\Lambda}b^{\mathrm{cs}}_i
    =
    \frac{\lambda_i b_i+\lambda^{\mathrm{cs}}b^{\mathrm{cs}}_i}{\Lambda}
    =
    \Esp[\widetilde Y^{\mathrm{cs}}_i].
\]

We now prove that $    Z^{\mathrm{cs}}_i  \lcx  \widetilde Y^{\mathrm{cs}}_i$.
Let $g$ be any convex function for which the expectations below exist. Using the capacity condition \eqref{eq:app-cs-capacity}, we have, for every $j=1,\ldots,n$,

\[
    0\leq \frac{a_{i,j}b_j}{b_i}\leq 1.
\]
Hence, by convexity of $g$,
\[
    \Esp[g(a_{i,j}b_j W)]
    \leq
    \left(1-\frac{a_{i,j}b_j}{b_i}\right)g(0)
    +
    \frac{a_{i,j}b_j}{b_i}
    \Esp[g(b_i W)].
\]
Therefore,
\begin{align*}
\Esp\!\left[g\!\left(Z^{\mathrm{cs}}_{i}\right)\right]
&=
\sum_{j=1}^{n}
\frac{\lambda_{j}}{\Lambda}
\Esp\!\left[
g\!\left(a_{i,j}b_{j}W\right)
\right]
+
\frac{\lambda^{\mathrm{cs}}}{\Lambda}
\Esp\!\left[
g\!\left(b^{\mathrm{cs}}_{i}W^{\mathrm{cs}}\right)
\right]
\\
&\leq
g(0)
\sum_{j=1}^{n}
\frac{\lambda_{j}}{\Lambda}
\left(
1-\frac{a_{i,j}b_{j}}{b_{i}}
\right)
\\
&\quad
+
\Esp\!\left[g\!\left(b_{i}W\right)\right]
\sum_{j=1}^{n}
\frac{\lambda_{j}}{\Lambda}
\frac{a_{i,j}b_{j}}{b_{i}}
+
\frac{\lambda^{\mathrm{cs}}}{\Lambda}
\Esp\!\left[
g\!\left(b^{\mathrm{cs}}_{i}W^{\mathrm{cs}}\right)
\right].
\end{align*}
By \eqref{eq:app-cs-idiosyncratic-fairness},
\[
    \sum_{j=1}^n
    \frac{\lambda_j}{\Lambda}
    \frac{a_{i,j}b_j}{b_i}
    =
    \frac{\lambda_i}{\Lambda}.
\]
It follows that
\begin{align*}
\Esp[g(Z^{\mathrm{cs}}_i)]
&\leq
\frac{\lambda_\bullet-\lambda_i}{\Lambda}g(0)
+
\frac{\lambda_i}{\Lambda}\Esp[g(b_i W)]
+
\frac{\lambda^{\mathrm{cs}}}{\Lambda}
\Esp[g(b^{\mathrm{cs}}_i W^{\mathrm{cs}})]\\
&=
\Esp[g(\widetilde Y^{\mathrm{cs}}_i)].
\end{align*}
Since the two random variables have the same mean, this proves
that $ Z^{\mathrm{cs}}_i     \lcx  \widetilde Y^{\mathrm{cs}}_i$ holds true.

The stand-alone and pooled surplus processes of participant $i$ have the same premium rate $c^{\mathrm{cs}}_i$, the same claim-arrival intensity $\Lambda$, and claim severities ordered in the convex order. The standard ruin-probability comparison theorem for the Cramér--Lundberg model therefore gives
\[
    \psi_i^{\mathrm{cs-pool}}(\kappa_i)
    \leq
    \psi_i^{\mathrm{cs}}(\kappa_i),
    \qquad \kappa_i\geq 0.
\]
This completes the proof.
\end{proof}

\begin{remark}
The assumption $\Avec b^{\mathrm{cs}}=b^{\mathrm{cs}}$ is essential in the above argument. It prevents the sharing rule from increasing the systemic exposure borne by some participants. If this condition is relaxed, the pooled systemic payment of participant $i$ becomes

\[
    \left(\sum_{j=1}^n a_{i,j}b^{\mathrm{cs}}_j\right)W^{\mathrm{cs}},
\]
which need not be smaller than $b^{\mathrm{cs}}_i W^{\mathrm{cs}}$ in convex order. In that case, equality of total expected losses alone is not sufficient to guarantee a reduction of individual ruin probabilities.
\end{remark}

\begin{remark}
A simple case in which $\Avec b^{\mathrm{cs}}=b^{\mathrm{cs}}$ is automatically satisfied is obtained when the systemic exposure vector $b^{\mathrm{cs}}$ is proportional to the idiosyncratic expected-loss vector
\[
    m=(\lambda_1 b_1,\ldots,\lambda_n b_n)^\top.
\]
Indeed, if $b^{\mathrm{cs}}=\rho m$ for some $\rho>0$, then the idiosyncratic actuarial-fairness condition $\Avec m=m$ implies $\Avec b^{\mathrm{cs}}=b^{\mathrm{cs}}$. In particular, when claim frequencies are homogeneous, this condition reduces to the more transparent requirement that systemic exposures are proportional to the mean idiosyncratic claim severities.
\end{remark}

}

\subsection{Mathematical Proofs} \label{sec:Mathematical Proofs}

\subsubsection{Proof of Proposition \ref{ImpactofPooling-Section4-Proposition1}} \label{sec:Proof of Proposition 3.8}

\begin{proof}
Fix $i\in\{1,\ldots,n\}$. We first rewrite the stand-alone loss process of participant $i$
using the aggregate occurrence process $\{N_t,\hspace{2mm}t\geq 0\}$ introduced in Section~\ref{sec:Loss model}.
Let $Y_{i,1}^\prime,Y_{i,2}^\prime,\ldots$ be independent and identically distributed random variables such that
\begin{align}
    \Prob\!\left[Y_{i,1}^\prime=0\right]
    =
    1-\frac{\lambda_i}{\lambda_\bullet},
    \qquad
    \Prob\!\left[Y_{i,1}^\prime>y\right]
    =
    \frac{\lambda_i}{\lambda_\bullet}\bigl(1-F_{Y_i}(y)\bigr),
    \quad y\ge 0.
    \label{ImpactofPooling-Section4-Equation4}
\end{align}
Define
\begin{align}
    S_{i,t}^\prime=\sum_{k=1}^{N_t} Y_{i,k}^\prime,
    \qquad t\ge 0.
    \label{ImpactofPooling-Section4-Equation5}
\end{align}
By the classical thinning property of independent Poisson processes,
$ \{S_{i,t},\,t\ge 0\}\eqD \{S_{i,t}^\prime,\,t\ge 0\}$.
Hence, the stand-alone and pooled models can be represented on the same aggregate arrival
process $\{N_t,\hspace{2mm}t\geq 0\}$.

Under Assumption \ref{LinearRiskSharing-Section3-AssumptionSF}, we have
$Y_{i,k}\eqD b_i W$, where $W\ge 0$ and $\Esp[W]=1$. Therefore,
\[
    Y_{i,1}^\prime
    \eqD
    \begin{cases}
        0, & \text{with probability } 1-\lambda_i/\lambda_\bullet,\\[1mm]
        b_i W, & \text{with probability } \lambda_i/\lambda_\bullet.
    \end{cases}
\]
In particular, $ \Esp[Y_{i,1}^\prime]=\frac{\lambda_i}{\lambda_\bullet}b_i$.

On the pooled side, the payment borne by participant $i$ at the $k$th occurrence time is
$ Z_{i,k}=\sum_{j=1}^n Z_{i,j,k}$,
and, conditionally on the event that the triggering claim comes from participant $j$, one has
\[
    Z_{i,k}\eqD a_{i,j}Y_{j,1}.
\]
Using again Assumption \ref{LinearRiskSharing-Section3-AssumptionSF}, it follows that
\[
    Z_{i,k}\eqD a_{i,J_k} b_{J_k} W,
\]
where $J_k$ defined in \eqref{DefJk} is the index of the participant generating the $k$th claim.
Hence, $\Esp[Z_{i,k}]  =   \sum_{j=1}^n \frac{\lambda_j}{\lambda_\bullet} a_{i,j} b_j$.
By Assumption \ref{LinearRiskSharing-Section3-AssumptionAF}, we get from
\eqref{PropertiesoftheLinearRisk-SharingRule-Section3-Equation5} that
\[
    \Esp[Z_{i,k}]
    =
    \frac{\lambda_i}{\lambda_\bullet}b_i
    =
    \Esp[Y_{i,1}^\prime].
\]

We now prove that $  Z_{i,k}\lcx Y_{i,k}^\prime$.
Let $g$ be any convex function such that the expectations below exist. Then
\begin{align*}
    \Esp[g(Z_{i,k})]
    &=
    \sum_{j=1}^n \frac{\lambda_j}{\lambda_\bullet}
    \Esp\!\left[g(a_{i,j}b_jW)\right].
\end{align*}
By Assumption \ref{LinearRiskSharing-Section3-AssumptionCC}, we have
\[
    0\le \frac{a_{i,j}b_j}{b_i}\le 1,
    \qquad j=1,\ldots,n.
\]
Therefore, for each $j$,
\[
    a_{i,j}b_jW
    =
    \left(1-\frac{a_{i,j}b_j}{b_i}\right)\! \cdot 0
    +
    \frac{a_{i,j}b_j}{b_i}\,(b_iW),
\]
and convexity of $g$ yields
\begin{align*}
    \Esp[g(a_{i,j}b_jW)]
    &\le
    \left(1-\frac{a_{i,j}b_j}{b_i}\right) g(0)
    +
    \frac{a_{i,j}b_j}{b_i}\, \Esp[g(b_iW)].
\end{align*}
Summing over $j$ gives
\begin{align*}
    \Esp[g(Z_{i,k})]
    &\le
    \sum_{j=1}^n \frac{\lambda_j}{\lambda_\bullet}
    \left(
        \left(1-\frac{a_{i,j}b_j}{b_i}\right) g(0)
        +
        \frac{a_{i,j}b_j}{b_i}\,\Esp[g(b_iW)]
    \right)
    \\
    &=
    g(0)\sum_{j=1}^n \frac{\lambda_j}{\lambda_\bullet}
    \left(1-\frac{a_{i,j}b_j}{b_i}\right)
    +
    \Esp[g(b_iW)]
    \sum_{j=1}^n \frac{\lambda_j}{\lambda_\bullet}\frac{a_{i,j}b_j}{b_i}.
\end{align*}
Using \eqref{PropertiesoftheLinearRisk-SharingRule-Section3-Equation5}, we obtain
\begin{align*}
    \sum_{j=1}^n \frac{\lambda_j}{\lambda_\bullet}\frac{a_{i,j}b_j}{b_i}
    =
    \frac{1}{\lambda_\bullet b_i}\sum_{j=1}^n \lambda_j a_{i,j}b_j
    =
    \frac{\lambda_i}{\lambda_\bullet}.
\end{align*}
Consequently,
$$
    \Esp[g(Z_{i,k})]\le
    \left(1-\frac{\lambda_i}{\lambda_\bullet}\right) g(0)
    +
    \frac{\lambda_i}{\lambda_\bullet}\Esp[g(b_iW)]
    =
    \Esp[g(Y_{i,1}^\prime)].
$$
Since we already proved that $\Esp[Z_{i,k}]=\Esp[Y_{i,1}^\prime]$, this shows that
$ Z_{i,k}\lcx Y_{i,1}^\prime$.

Now consider the surplus process
\[
    \widetilde V_{i,t}=c_it-S_{i,t}^\prime+\kappa_i,
    \qquad t\ge 0.
\]
Because $\{S_{i,t},\,t\ge 0\}\eqD\{S_{i,t}^\prime,\,t\ge 0\}$, the ruin probability associated with
$\widetilde V_{i,t}$ is exactly $\psi_i(\kappa_i)$. Considering $V_{i,t}^{\mathrm{pool}}$, 
both models have the same premium rate $c_i$, the same claim arrival intensity
$\lambda_\bullet$, and claim severities ordered by convex order. Therefore, Proposition~8.2
in Asmussen and Albrecher (2010) implies that the inequality $ \psi_i^{\mathrm{pool}}(\kappa_i)\le \psi_i(\kappa_i)$
holds true, as announced. This ends the proof.
\end{proof}

\begin{remark}
Notice that the proof also yields a \textcolor{black}{pointwise} convex-order comparison at claim occurrence times. Indeed,
by closure of the convex order under convolution, for every $k\ge 1$,
\[
    \sum_{\ell=1}^k Z_{i,\ell} \lcx \sum_{\ell=1}^k Y_{i,\ell}^\prime.
\]
Thus, after any fixed number of aggregate claim occurrences, the total amount borne by
participant $i$ inside the pool is less dispersed, in the convex-order sense, than the
corresponding stand-alone amount.
\end{remark}

\subsubsection{Proof of Proposition \ref{BeyondScaleFamilies-Section4-Proposition1}} \label{sec:Proof of Proposition 4.1}

\begin{proof}
Fix two distinct participants $i\neq j$ such that $b_i\le b_j$, and set
$\alpha:=\frac{b_i}{b_j}\in(0,1]$.
Consider the matrix $\Avec^{(i\leftarrow j)}$ defined by
\begin{equation}
    a_{i,j}=\alpha,\qquad
    a_{j,j}=1-\alpha,\qquad
    a_{j,i}=1,\qquad
    a_{i,i}=0,\qquad a_{\ell,\ell}=1 \quad \text{for } \ell\notin\{i,j\},
    \label{BeyondScaleFamilies-Section4-Equation7}
\end{equation}
and all other entries equal to $0$.

Let us first check that $\Avec^{(i\leftarrow j)}$ is admissible. Full allocation is immediate.
Actuarial fairness also holds. For row $i$, one has
\[
\sum_{k=1}^n a_{i,k}b_k = a_{i,j}b_j=\alpha b_j=b_i.
\]
For row $j$,
\[
\sum_{k=1}^n a_{j,k}b_k = a_{j,i}b_i+a_{j,j}b_j
= b_i+(1-\alpha)b_j
= b_i+b_j-b_i=b_j.
\]
For every $\ell\notin\{i,j\}$, fairness is trivial since $a_{\ell,\ell}=1$.
Because claim frequencies are homogeneous, admissibility also implies the capacity constraint.

Under the sharing rule $\Avec^{(i\leftarrow j)}$, participant $i$ pays
\[
Z_i=
\begin{cases}
\alpha Y_j, & \text{if } J=j,\\
0, & \text{otherwise}.
\end{cases}
\]
Thus,
\begin{equation}
    Z_i \eqD \widetilde B\,\alpha Y_j,
    \label{BeyondScaleFamilies-Section4-Equation8}
\end{equation}
where $\widetilde B\sim \mathrm{Bernoulli}(1/n)$ and $\widetilde B$ is independent of $Y_j$.
On the other hand,
\[
Y_i' \eqD B_iY_i,
\qquad B_i\sim \mathrm{Bernoulli}(1/n),
\]
with $B_i$ independent of $Y_i$.

By assumption, \eqref{BeyondScaleFamilies-Section4-Equation4} holds for this admissible
matrix, so that
\[
\widetilde B\,\alpha Y_j \lcx B_iY_i.
\]
Now, for every convex function $\varphi$ for which the expectations exist,
\[
\Esp[\varphi(\widetilde B\,\alpha Y_j)]
=
\left(1-\frac1n\right)\varphi(0)
+\frac1n\,\Esp[\varphi(\alpha Y_j)],
\]
and similarly,
\[
\Esp[\varphi(B_iY_i)]
=
\left(1-\frac1n\right)\varphi(0)
+\frac1n\,\Esp[\varphi(Y_i)].
\]
Therefore,
\[
\widetilde B\,\alpha Y_j \lcx B_iY_i
\qquad\Leftrightarrow\qquad
\alpha Y_j \lcx Y_i.
\]
Since
\[
\Esp[\alpha Y_j]=\alpha b_j=b_i=\Esp[Y_i],
\]
we conclude that $\alpha Y_j \lcx Y_i$. Dividing both sides by $b_i$ yields
$\frac{Y_j}{b_j}\lcx\frac{Y_i}{b_i}$,
which proves \eqref{BeyondScaleFamilies-Section4-Equation5}. The chain
\eqref{BeyondScaleFamilies-Section4-Equation6} follows by ordering the participants according
to their means.
\end{proof}

\subsection{Examples illustrating the role of Assumption \ref{LinearRiskSharing-Section3-AssumptionSF}} \label{sec:Examples illustrating the role of AssumptionSF}

\subsubsection{An example with random and constant stand-alone losses for $n=2$}
\label{sec:counterexample}

We now exhibit an explicit example (for $n=2$) in which the stand-alone loss is random for one participant
while it is constant for the other. This example shows that even when capacity and actuarial fairness are satisfied and both convex-order constraints
\(
Z_1\lcx Y'_1
\)
and
\(
Z_2\lcx Y'_2
\)
are fulfilled, the severities need not form a common scale family.

Consider
\begin{equation}
Y_2 \equiv 2,
\text{ and }
Y_1 =
\begin{cases}
0, & \text{with probability } 1/2,\\
2, & \text{with probability } 1/2.
\end{cases}
\label{eq:Ychoice}
\end{equation}
Then, $b_2=\Esp[Y_2]=2$ and $b_1=\Esp[Y_1]=1$. Hence, the normalized severities differ:
\(
Y_2/b_2\equiv 1
\)
is constant whereas
\(
Y_1/b_1 = Y_1
\)
is non-degenerate, so there is no common scale family.

Define the sharing matrix
\begin{equation}
\Avec=\begin{pmatrix}
a_{1,1} & a_{1,2}\\
a_{2,1} & a_{2,2}
\end{pmatrix}
=
\begin{pmatrix}
0 & 1/2\\
1 & 1/2
\end{pmatrix}.
\label{eq:Amatrix}
\end{equation}
\textcolor{black}{This matrix satisfies the full-allocation, actuarial-fairness and capacity constraints used in Proposition~\ref{ImpactofPooling-Section4-Proposition1}, although the scale-family assumption itself is not fulfilled}. Indeed,
full allocation obviously holds (each column of $\Avec$ sums to $1$). Capacity is valid since
$a_{1,2}b_2=(1/2)\cdot2=1=b_1$ and $a_{2,1}b_1=1\cdot1=1\le2=b_2$.
Actuarial fairness is also satisfied because
\(
a_{1,1}b_1+a_{1,2}b_2=0\cdot1+(1/2)\cdot2=1=b_1
\)
and
\(
a_{2,1}b_1+a_{2,2}b_2=1\cdot1+(1/2)\cdot2=2=b_2.
\)

The reasoning proceeds in 3 steps:

\medskip
\noindent\textbf{Step 1: compute $Z_1,Z_2$.}
If $J=1$, then $(Z_1,Z_2)=(0,Y_1)$.
If $J=2$, then $(Z_1,Z_2)=((1/2)Y_2,(1/2)Y_2)=(1,1)$ since $Y_2\equiv2$.
Therefore,
\begin{equation}
Z_1 =
\begin{cases}
0, & \text{with probability } 1/2,\\
1, & \text{with probability } 1/2,
\end{cases}
\quad \text{ and } \quad
Z_2 =
\begin{cases}
Y_1, & \text{with probability } 1/2,\\
1, & \text{with probability } 1/2.
\end{cases}
\label{eq:Z12}
\end{equation}
It follows that
\begin{equation}
\Prob[Z_2=0]=1/4,\qquad \Prob[Z_2=1]=1/2,\qquad \text{ and } \qquad \Prob[Z_2=2]=1/4.
\label{eq:Z2law}
\end{equation}

\medskip
\noindent\textbf{Step 2: compute $Y'_1,Y'_2$.}
Here $Y'_i=B_iY_i$ with $B_i\sim\mathrm{Bernoulli}(1/2)$.
Thus
\begin{equation}
Y'_1 =
\begin{cases}
0, & \text{with probability } 3/4,\\
2, & \text{with probability } 1/4,
\end{cases}
\qquad
Y'_2 =
\begin{cases}
0, & \text{with probability } 1/2,\\
2, & \text{with probability } 1/2.
\end{cases}
\label{eq:Yprime12}
\end{equation}

\medskip
\noindent\textbf{Step 3: verify $Z_1\lcx Y'_1$ and $Z_2\lcx Y'_2$ (via stop-loss transforms).}
For integrable non-negative random variables, $X\lcx Y$ is equivalent to equality of means and the stop-loss inequalities
\(
\Esp[(X-t)_+]\le \Esp[(Y-t)_+]
\)
for all $t\ge0$.
We check this criterion.

\smallskip
\noindent\emph{(i) $Z_1\lcx Y'_1$.}
Both means are $1/2$.
For $t\in[0,1]$,
\(
\Esp[(Z_1-t)_+]=\tfrac12(1-t)
\),
and for $t\ge1$ it is $0$.
For $Y'_1$, for $t\in[0,2]$,
\(
\Esp[(Y'_1-t)_+]=\tfrac14(2-t)
\),
and for $t\ge2$ it is $0$.
One checks that
\(
\tfrac12(1-t)\le \tfrac14(2-t)
\)
for $t\in[0,1]$ and also $0\le \tfrac14(2-t)$ for $t\in[1,2]$.
Hence, $Z_1\lcx Y'_1$.

\smallskip
\noindent\emph{(ii) $Z_2\lcx Y'_2$.}
Both means equal $1$.
For $t\in[0,1]$, using \eqref{eq:Z2law},
\begin{align*}
\Esp[(Z_2-t)_+]
&=\frac12(1-t)+\frac14(2-t)=1-\frac34 t.
\end{align*}
For $t\in[1,2]$,
\(
\Esp[(Z_2-t)_+]=\frac14(2-t)
\),
and it is $0$ for $t\ge2$.
For $Y'_2$, for $t\in[0,2]$,
\(
\Esp[(Y'_2-t)_+]=\frac12(2-t)=1-\frac12 t
\),
and it is $0$ for $t\ge2$.
Thus, for $t\in[0,1]$,
\(
1-\frac34 t \le 1-\frac12 t
\),
and for $t\in[1,2]$,
\(
\frac14(2-t)\le \frac12(2-t)
\).
Hence, $Z_2\lcx Y'_2$.

\medskip
In conclusion, all constraints are fulfilled and both convex-order inequalities are satisfied, yet $(Y_1,Y_2)$ do not belong to a common scale family.

\subsubsection{An example with random stand-alone losses for $n=2$}
\label{sec:counterexample2}

We now provide an example in which the stand-alone losses are random for each participant. Under this setting, as in the previous case, even when capacity, actuarial fairness and both convex-order constraints
\(
Z_1\lcx Y'_1
\)
and
\(
Z_2\lcx Y'_2
\)
are fulfilled, the severities need not form a common scale family.

Consider the random variables
\begin{equation}
Y_1 =
\begin{cases}
0, & \text{with probability } 1/3,\\
100, & \text{with probability } 1/3, \\
200, & \text{with probability } 1/3,
\end{cases}
\text{ \quad and \quad }
Y_2 =
\begin{cases}
0, & \text{with probability } 1/3,\\
150, & \text{with probability } 1/3, \\
400, & \text{with probability } 1/3.
\end{cases}
\label{eq:Ychoice}
\end{equation}
Hence, we have that $b_1=\Esp[Y_1]=100$ and $b_2=\Esp[Y_2]=\textcolor{black}{183.333}$. Thus, the normalized severities differ:
\(
Y_1/b_1 = Y_1/100
\)
whereas
\(
Y_2/b_2 = Y_2/\textcolor{black}{183.333}
\). Therefore, there is no common scale family.

Furthermore, let us define the sharing matrix
\begin{equation}
\Avec=\begin{pmatrix}
a_{1,1} & a_{1,2}\\
a_{2,1} & a_{2,2}
\end{pmatrix}
=
\begin{pmatrix}
1/2 & \textcolor{black}{0.273}\\
1/2 & \textcolor{black}{0.727}
\end{pmatrix},
\label{eq:Amatrix}
\end{equation}
which clearly fulfills \textcolor{black}{the full-allocation, actuarial-fairness and capacity constraints used in Proposition~\ref{ImpactofPooling-Section4-Proposition1}, although the scale-family assumption itself is not fulfilled}. That is, full allocation holds since each column of $\Avec$ sums to $1$, while the capacity constraints are satisfied because $a_{1,2}b_2=(\textcolor{black}{0.273})\cdot\textcolor{black}{183.333}=\textcolor{black}{50.05}<100=b_1$ and $a_{2,1}b_1=(1/2)\cdot100=50<\textcolor{black}{183.333}=b_2$. Moreover, actuarial fairness is also satisfied:
\(
a_{1,1}b_1+a_{1,2}b_2=(1/2)\cdot100+\textcolor{black}{0.273}\cdot\textcolor{black}{183.333}=100=b_1
\)
and
\(
a_{2,1}b_1+a_{2,2}b_2=(1/2)\cdot100+\textcolor{black}{0.727}\cdot\textcolor{black}{183.333}=\textcolor{black}{183.333}=b_2.
\)

Our reasoning follows that of the previous example:

\medskip
\noindent\textbf{Step 1: compute $Z_1,Z_2$.}
If $J=1$, then $(Z_1,Z_2)=((1/2)Y_1,(1/2)Y_1)$.
If $J=2$, then $(Z_1,Z_2)=(\textcolor{black}{0.273}Y_2,\textcolor{black}{0.727}Y_2)$.
Therefore,
\begin{equation}
Z_1 =
\begin{cases}
(1/2)Y_1, & \text{with probability } 1/2,\\
\textcolor{black}{0.273}Y_2, & \text{with probability } 1/2,
\end{cases}
\text{ and }
Z_2 =
\begin{cases}
(1/2)Y_1, & \text{with probability } 1/2,\\
\textcolor{black}{0.727}Y_2, & \text{with probability } 1/2.
\end{cases}
\label{eq:Z12}
\end{equation}
It follows that
\begin{align}
\Prob[Z_1=0]=1/3,\quad &\Prob[Z_1=50]=1/6,\quad \Prob[Z_1=100]=1/6,\quad 
\Prob[Z_1=\textcolor{black}{40.909}]=1/6, \nonumber\\
\Prob[Z_1=\textcolor{black}{109.091}]=1/6,\quad
&\Prob[Z_2=0]=1/3,\quad \Prob[Z_2=50]=1/6,\quad \Prob[Z_2=100]=1/6, \nonumber \\ 
&\Prob[Z_2=\textcolor{black}{109.091}]=1/6,\quad \text{ and } \quad
\Prob[Z_2=\textcolor{black}{290.909}]=1/6. \label{eq:Z2law2}
\end{align}

\medskip
\noindent\textbf{Step 2: compute $Y'_1,Y'_2$.}
Here $Y'_i=B_iY_i$ with $B_i\sim\mathrm{Bernoulli}(1/2)$.
Thus
\begin{equation}
Y'_1 =
\begin{cases}
0, & \text{with probability } 2/3,\\
100, & \text{with probability } 1/6,\\
200, & \text{with probability } 1/6.
\end{cases}
\qquad
Y'_2 =
\begin{cases}
0, & \text{with probability } 2/3,\\
150, & \text{with probability } 1/6,\\
400, & \text{with probability } 1/6.
\end{cases}
\label{eq:Yprime12}
\end{equation}

\medskip
\noindent\textbf{Step 3: verify $Z_1\lcx Y'_1$ and $Z_2\lcx Y'_2$ (via stop-loss transforms).}
We check the stop-loss criterion.

\smallskip
\noindent\emph{(i) $Z_1\lcx Y'_1$.}
Both means are $50$.
Using \eqref{eq:Z2law2}, it yields
\begin{align*}
\Esp[(Z_1-t)_+]=
\begin{cases}
\frac{1}{6}\left(\textcolor{black}{190.909} - 3t\right), & \text{for } 0 \leq t < \textcolor{black}{40.909},\\
\frac{1}{6}\left(150 - 2t\right), & \text{for } \textcolor{black}{40.909} \leq t < 50, \\
\frac{1}{6}\left(100 - t\right), & \text{for } 50 \leq t < 100,\\
0, & \text{for } 100 \leq t.
\end{cases}
\end{align*}
On the other hand, for $Y'_1$, it yields,
\begin{align*}
\Esp[(Y'_1-t)_+]=
\begin{cases}
\frac{1}{6}\left(300 - 2t\right), & \text{for } 0 \leq t < 100,\\
\frac{1}{6}\left(200 - t\right), & \text{for } 100 \leq t < 200, \\
0, & \text{for } 200 \leq t.
\end{cases}
\end{align*}
One checks that
\(
\Esp[(Z_1-t)_+] \leq \Esp[(Y'_1-t)_+]
\)
for $t\in[0, \infty)$.
Hence, $Z_1\lcx Y'_1$.

\smallskip
\noindent\emph{(ii) $Z_2\lcx Y'_2$.}
Both means equal $\textcolor{black}{91.667}$.
Using \eqref{eq:Z2law2}, it yields
\begin{align*}
\Esp[(Z_2-t)_+]=
\begin{cases}
\frac{1}{6}\left(550 - 4t\right), & \text{for } 0 \leq t < 50,\\
\frac{1}{6}\left(500 - 3t\right), & \text{for } 50 \leq t < 100, \\
\frac{1}{6}\left(400 - 2t\right), & \text{for } 100 \leq t < \textcolor{black}{109.091},\\
\frac{1}{6}\left(\textcolor{black}{290.909} - t\right), & \text{for } \textcolor{black}{109.091} \leq t < \textcolor{black}{290.909},\\
0, & \text{for } \textcolor{black}{290.909} \leq t.
\end{cases}
\end{align*}
For $Y'_2$, we have the following,
\begin{align*}
\Esp[(Y'_2-t)_+]=
\begin{cases}
\frac{1}{6}\left(550 - 2t\right), & \text{for } 0 \leq t < 150,\\
\frac{1}{6}\left(400 - t\right), & \text{for } 150 \leq t < 400, \\
0, & \text{for } 400 \leq t.
\end{cases}
\end{align*}
One checks that
\(
\Esp[(Z_2-t)_+] \leq \Esp[(Y'_2-t)_+]
\)
for $t\in[0, \infty)$. Hence, $Z_2\lcx Y'_2$.

\medskip
Thus, we conclude that all constraints are fulfilled and both convex-order inequalities are satisfied, yet $(Y_1,Y_2)$ do not belong to a common scale family.

%\medskip
%Requiring $Z_i\lcx Y'_i$ for all participants and all admissible sharing matrices yields the inverse ordering \eqref{eq:inverse_order} of normalized severities (hence the chain \eqref{eq:chain}).
%The constraint for agent $j$ further yields additive inequalities of the form \eqref{eq:additive_restriction}, which translate into moment and stop-loss restrictions.
%However, even with capacity and actuarial fairness, these conditions do not force a common scale family, as shown by the counterexamples worked out in this section.

{\color{black}
\subsubsection{An example with continuous random stand-alone losses for $n=2$}
\label{sec:counterexample3}

We now provide an example in which the stand-alone losses are continuous random variables for each participant. Under this setting, as in the two previous cases, even when capacity, actuarial fairness and both convex-order constraints
\(
Z_1\lcx Y'_1
\)
and
\(
Z_2\lcx Y'_2
\)
are fulfilled, the severities need not form a common scale family.

Let $Y_1$ and $Y_2$ be continuous random variables with probability density functions
\begin{equation}
f_{Y_1}(y) =
\begin{cases}
\frac{1}{100}, & \text{for }  0 \leq y \leq 100,\\
0, & \text{otherwise},
\end{cases}
\text{ \quad and \quad }
f_{Y_2}(y) =
\begin{cases}
\frac{2\left(100 - y\right)}{10000}, & \text{for } 0 \leq y \leq 100,\\
0, & \text{otherwise},
\end{cases}
\label{eq:Ychoice3}
\end{equation}
respectively. Specifically, let $Y_{1} \sim \text{Uniform}\left(0, 100\right)$ and let $Y_{2}$ follow a left-triangular distribution on $(0,100)$. Hence, we have that $b_1=\Esp[Y_1]=\frac{100}{2}=50$ and $b_2=\Esp[Y_2]= \frac{100}{3} = 33.333$. Thus, the normalized severities differ:
\(
Y_1/b_1 = Y_1/50
\)
whereas
\(
Y_2/b_2 = \frac{3}{100}Y_2
\). Therefore, there is no common scale family.

Furthermore, let us define the sharing matrix
\begin{equation}
\Avec=\begin{pmatrix}
a_{1,1} & a_{1,2}\\
a_{2,1} & a_{2,2}
\end{pmatrix}
=
\begin{pmatrix}
0.6 & 0.6\\
0.4 & 0.4
\end{pmatrix},
\label{eq:Amatrix3}
\end{equation}
which clearly fulfills the full-allocation, actuarial-fairness and capacity constraints used in Proposition~\ref{ImpactofPooling-Section4-Proposition1}, although the scale-family assumption itself is not fulfilled. That is, full allocation holds since each column of $\Avec$ sums to $1$, while the capacity constraints are satisfied because $a_{1,2}b_2=0.6\cdot \left(\frac{100}{3}\right)=20<50=b_1$ and $a_{2,1}b_1=0.4\cdot50=20<\frac{100}{3}= 33.333 =b_2$. Moreover, actuarial fairness is also satisfied:
\(
a_{1,1}b_1+a_{1,2}b_2=0.6\cdot50+0.6\cdot\left(\frac{100}{3}\right)=50=b_1
\)
and
\(
a_{2,1}b_1+a_{2,2}b_2=0.4\cdot50+0.4\cdot\left(\frac{100}{3}\right)=\frac{100}{3}= 33.333 =b_2.
\)

Our reasoning is similar to that of the previous examples:

\medskip
\noindent\textbf{Step 1: derive the survival functions of $Z_1,Z_2$.}

\begin{equation}
\overline{F}_{Z_1}(z) := \Prob[Z_1 > z] =
\begin{cases}
\frac{1}{2}\left(1 - \frac{z}{60} + \left(1 - \frac{z}{60}\right)^{2}\right), & \text{for }  0 \leq z \leq 60,\\
0, & \text{otherwise},
\end{cases}
\label{eq:Z1survival}
\end{equation}
\text{ \quad and \quad }
\begin{equation}
\overline{F}_{Z_2}(z) := \Prob[Z_2 > z] =
\begin{cases}
\frac{1}{2}\left(1 - \frac{z}{40} + \left(1 - \frac{z}{40}\right)^{2}\right), & \text{for }  0 \leq z \leq 40,\\
0, & \text{otherwise}.
\end{cases}
\label{eq:Z2survival}
\end{equation}

\medskip
\noindent\textbf{Step 2: derive the survival functions of $Y'_1,Y'_2$.}

\begin{equation}
\overline{F}_{Y'_1}(y) := \Prob[Y'_1 > y] =
\begin{cases}
\frac{1}{2}\left(1 - \frac{y}{100}\right), & \text{for }  0 \leq y \leq 100,\\
0, & \text{otherwise},
\end{cases}
\label{eq:Y1survival}
\end{equation}
\text{ \quad and \quad }
\begin{equation}
\overline{F}_{Y'_2}(y) := \Prob[Y'_2 > y] =
\begin{cases}
\frac{1}{2}\left(1 - \frac{y}{100}\right)^{2}, & \text{for }  0 \leq y \leq 100,\\
0, & \text{otherwise}.
\end{cases}
\label{eq:Y2survival}
\end{equation}

\medskip
\noindent\textbf{Step 3: verify $Z_1\lcx Y'_1$ and $Z_2\lcx Y'_2$ (via stop-loss transforms).} Recall that the stop-loss transform of an integrable non-negative random variable $X$ can be obtained as the integral of its survival function $\overline{F}_{X}(x) := \Prob[X>x]$ from $t$ to infinity: $\Esp\left[(X-t)_{+}\right]=\int^{\infty}_{t}\overline{F}_{X}(x)\der x$. We now check the stop-loss criterion.

\smallskip

\noindent\emph{(i) $Z_1\lcx Y'_1$.}
Both means are $25$. Using \eqref{eq:Z1survival} and \eqref{eq:Y1survival}, for $60 \leq t \leq 100$ it yields

\begin{equation}
\Esp\left[(Y'_1-t)_{+}\right] - \Esp\left[(Z_1-t)_{+}\right] = \int^{100}_{t} \left(\overline{F}_{Y'_1}(x) -  \overline{F}_{Z_1}(x)\right) \der x = \frac{\left(100 - t\right)^{2}}{400} \geq 0.
\end{equation}

Similarly, for $0 \leq t < 60$, it yields

\begin{equation}
\Esp\left[(Y'_1-t)_{+}\right] - \Esp\left[(Z_1-t)_{+}\right] = \int^{60}_{t} \left(\overline{F}_{Y'_1}(x) -  \overline{F}_{Z_1}(x)\right) \der x = \frac{t}{21600}\left(t^{2} - 216t + 10800\right) \geq 0.
\end{equation}

The last inequality follows from the fact that $\nu_{1}(t) := \frac{t}{21600} \geq 0$ for $0 \leq t < 60$. Moreover, $\varphi_{1}(t) := t^{2} - 216t + 10800$ is an upward-opening parabola with vertex at $t = \frac{216}{2} = 108$, which lies outside the interval $[0, 60)$. Therefore, $\varphi_{1}(t)$ is strictly decreasing on $0 \leq t <60$, and its minimum over this interval is attained at the right endpoint: $\varphi_{1}(60) = (60)^{2} - 216(60) + 10800 = 1440 > 0$. Hence, both $\nu_{1}(t)$ and $\varphi_{1}(t)$ are non-negative on $[0,60)$, implying that their product is also non-negative. Thus, we conclude $Z_1\lcx Y'_1$.

\smallskip
\noindent\emph{(ii) $Z_2\lcx Y'_2$.}
Both means are $16.667$. Using \eqref{eq:Z2survival} and \eqref{eq:Y2survival}, for $40 \leq t \leq 100$ it yields

\begin{equation}
\Esp\left[(Y'_2-t)_{+}\right] - \Esp\left[(Z_2-t)_{+}\right] = \int^{100}_{t} \left(\overline{F}_{Y'_2}(x) -  \overline{F}_{Z_2}(x)\right) \der x = \frac{50}{3}\left(1 - \frac{t}{100}\right)^{3} \geq 0.
\end{equation}

Then, for $0 \leq t < 40$, it yields

\begin{align}
\begin{split}
\Esp\left[(Y'_2-t)_{+}\right] - \Esp\left[(Z_2-t)_{+}\right] & = \int^{40}_{t} \left(\overline{F}_{Y'_2}(x) -  \overline{F}_{Z_2}(x)\right) \der x \\ & = \frac{t}{80000}\left(7t^{2} - 1100t + 40000\right) \geq 0.
\end{split}
\end{align}

Again, the last inequality follows from the fact that $\nu_{2}(t) := \frac{t}{80000} \geq 0$ for $0 \leq t < 40$. Furthermore, $\varphi_{2}(t) := 7t^{2} - 1100t + 40000$ is an upward-opening parabola with vertex at $t = \frac{1100}{14} = 78.571$, which lies outside the interval $[0, 40)$. Thus, $\varphi_{2}(t)$ is strictly decreasing on $0 \leq t < 40$, and its minimum over this interval is attained at the right endpoint: $\varphi_{2}(40) = 7(40)^{2} - 1100(40) + 40000 = 7200 > 0$. Hence, both $\nu_{2}(t)$ and $\varphi_{2}(t)$ are non-negative on $[0,40)$, implying that their product is also non-negative. Thus, we conclude $Z_2\lcx Y'_2$.

An alternative approach to verifying the stochastic order inequalities $Z_1\lcx Y'_1$ and $Z_2\lcx Y'_2$ is to apply the one-crossing condition for survival functions; see, for example, Theorem 3.A.44 in Shaked and Shanthikumar (2007). According to this result, if $X$ and $Y$ are two continuous random variables with equal means and survival functions $\overline{F}_{X}$ and $\overline{G}_{Y}$, respectively. Then, $X\lcx Y$ if the following condition holds: $S^{-}(\overline{F}_{X} - \overline{G}_{Y})=1$ and the sign sequence is $+,-$. Here, $S^{-}(h) = \sup S^{-}\left(h(x_1),h(x_2),...,h(x_m)\right)$ denotes the number of sign changes of the function $h$ defined on the subset $I$ of the real line, where $S^{-}(y_1,y_2,...,y_m)$ is the number of sign changes of the indicated sequence, zero terms being discarded and the supremum is extended over all sets $x_1 < x_2< \dots < x_m$ such that $x_{i} \in I$ and $m<\infty$. We now verify this criterion.

\smallskip
\noindent\emph{(i) $Z_1\lcx Y'_1$.} Both means are $25$. Using \eqref{eq:Z1survival} and \eqref{eq:Y1survival}, for $0 \leq x < 60$, we have the following

\begin{equation}
D_{1}(x) := \overline{F}_{Z_1}(x) - \overline{F}_{Y'_1}(x) = \frac{1}{2}\left(1 - \frac{x}{60}\right)^{2} - \frac{x}{300}.
\end{equation}

First, observe that $D_{1}(0) = \frac{1}{2} > 0$. Moreover,

\begin{equation}
D'_{1}(x) = -\frac{1}{60}\left(1 - \frac{x}{60}\right)-\frac{1}{300} < 0, \quad 0 \leq x < 60,
\end{equation}

implying that $D_{1}(x)$ is strictly decreasing on $[0, 60)$. Consequently, $D_{1}(x)$ admits a unique root at $x = 32.201$ within this interval. It follows that

\begin{equation}
D_{1}(x) \geq 0, \quad 0 \leq x \leq 32.201, \quad \text{and} \quad D_{1}(x) < 0, \quad 32.201 < x < 60.
\end{equation}

It therefore remains to show that $D_{1}(x) < 0$ for all $60 \leq x \leq 100$. This establishes that $D_{1}(x)$ changes sign exactly once over its support and the sign sequence is $+,-$, thereby satisfying the one-crossing condition for survival functions. For $60 \leq x \leq 100$, we obtain

\begin{equation}
D_{1}(x) = -\frac{1}{2}\left(1 - \frac{x}{100}\right) \leq 0, \quad 60 \leq x \leq 100.
\end{equation}

Then, we conclude $Z_1\lcx Y'_1$.

\smallskip
\emph{(ii) $Z_2\lcx Y'_2$.} Both means are $16.667$. Using \eqref{eq:Z2survival} and \eqref{eq:Y2survival}, for $0 \leq x < 40$, we have the following

\begin{equation}
D_{2}(x) := \overline{F}_{Z_2}(x) - \overline{F}_{Y'_2}(x) = \frac{1}{2}\left(1 - \frac{x}{40} + \left(1 - \frac{x}{40}\right)^{2} - \left(1 - \frac{x}{100}\right)^{2}\right).
\end{equation}

First, observe that $D_{2}(0) = \frac{1}{2} > 0$. Moreover,

\begin{equation}
D'_{2}(x) = \frac{1}{2}\left(\frac{1}{50}\left(1 - \frac{x}{100}\right) - \frac{1}{40} - \frac{1}{20}\left(1 - \frac{x}{40}\right)\right) < 0, \quad 0 \leq x < 40,
\end{equation}
implying that $D_{2}(x)$ is strictly decreasing on $[0, 40)$. Consequently, $D_{2}(x)$ admits a unique root at $x = 23.415$ within this interval. It follows that

\begin{equation}
D_{2}(x) \geq 0, \quad 0 \leq x \leq 23.415, \quad \text{and} \quad D_{2}(x) < 0, \quad 23.415 < x < 40.
\end{equation}

It therefore remains to show that $D_{2}(x) < 0$ for all $40 \leq x \leq 100$. This establishes that $D_{2}(x)$ changes sign exactly once over its support and the sign sequence is $+,-$, thereby satisfying the one-crossing condition for survival functions. For $40 \leq x \leq 100$, we obtain
\begin{equation}
D_{2}(x) = -\frac{1}{2}\left(1 - \frac{x}{100}\right)^2 \leq 0, \quad 40 \leq x \leq 100.
\end{equation}
Consequently, we conclude $Z_2\lcx Y'_2$.
}

\subsection{Supplementary example illustrating the role of Assumption \ref{LinearRiskSharing-Section3-AssumptionCC}}\label{sec:Supplementary example illustrating the role of AssumptionCC}

\subsubsection{A numerical illustration with failure of the constraint for two participants}
\label{SubSub422}

For this second example, we examine a pool of three participants ($n=3$) with Exponentially-distributed losses, defined as follows:
\begin{itemize}
    \item[(i)] Claim frequency: $\lambda_1=100, \lambda_2=2$, and $\lambda_3=100$,
    \item[(ii)] Claim severity: $\alpha_1=1, \alpha_2=\frac{1}{50}$, and $\alpha_3=1$.
\end{itemize}
The resulting mean claim sizes are
$\textcolor{black}{b_{1}}=\frac{1}{\alpha_1}=1$, $\textcolor{black}{b_{2}}=\frac{1}{\alpha_2}=50$, and
$\textcolor{black}{b_{3}}=\frac{1}{\alpha_3}=1$.

We again assume a safety loading factor of $\eta = \tfrac{2}{5}$. Losses are allocated according to either
the mean-proportional or the alternative  schemes, defined by
$$
\Avec^{\mathrm{MP}}=\left(\begin{array}{ccc}
\frac{1}{3} & \frac{1}{3} & \frac{1}{3} \\ \\
\frac{1}{3} & \frac{1}{3} & \frac{1}{3} \\ \\
\frac{1}{3} & \frac{1}{3} & \frac{1}{3}
\end{array}\right)
\text{ and }
\Avec^{\mathrm{ALT}}=\left(\begin{array}{ccc}
0.5 & 0.4 & 0.1 \\ \\
0.3 & 0.2 & 0.5 \\ \\
0.2 & 0.4 & 0.4
\end{array}\right),
$$
respectively.

Both the mean-proportional and alternative schemes violate condition \eqref{Const_b} for participants 1 and 3 because
\begin{align*}
a^{\mathrm{MP}}_{1, 2} &= \frac{1}{3} \approx \textcolor{black}{0.333} > \frac{b_{1}}{b_{2}} = \frac{1}{50} = 0.02, \\ 
a^{\mathrm{MP}}_{3, 2} &= \frac{1}{3} \approx \textcolor{black}{0.333} > \frac{b_{3}}{b_{2}} = \frac{1}{50} = 0.02, \\
a^{\mathrm{ALT}}_{1, 2} &= 0.4 > \frac{b_{1}}{b_{2}} = \frac{1}{50} = 0.02, \qquad \text{ and } \\ 
a^{\mathrm{ALT}}_{3, 2} &= 0.4 > \frac{b_{3}}{b_{2}} = \frac{1}{50} = 0.02.
\end{align*}
As a result, Figure \ref{NumericalIllustrations-Section4-Figure3} shows that pooling does not benefit participants 1 and 3.

\begin{figure}[H]
  	\centering
	% Plot generated with the R code: Example 4.2.2 - Exponentially-distributed losses (Failure for two participants).R
        % We could generate other plots if needed.
        \begin{subfigure}[b]{0.32\linewidth}
  	\includegraphics[width=4.5cm, height=4.5cm]{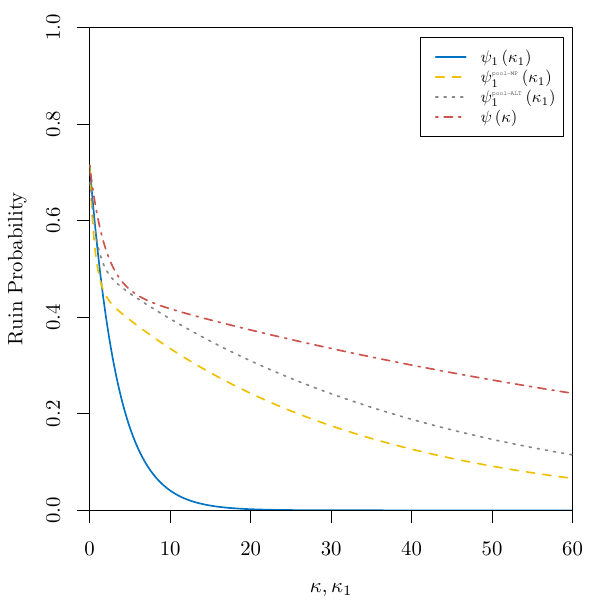}
	\caption{}
  	\label{NumericalIllustrations-Section4-Figure3-a}
	\end{subfigure}
 	 \hspace{0.01cm}
	% Plot generated with the R code: Example 4.2.2 - Exponentially-distributed losses (Failure for two participants).R
        % We could generate other plots if needed.
         \begin{subfigure}[b]{0.32\linewidth}
 	 \includegraphics[width=4.5cm, height=4.5cm]{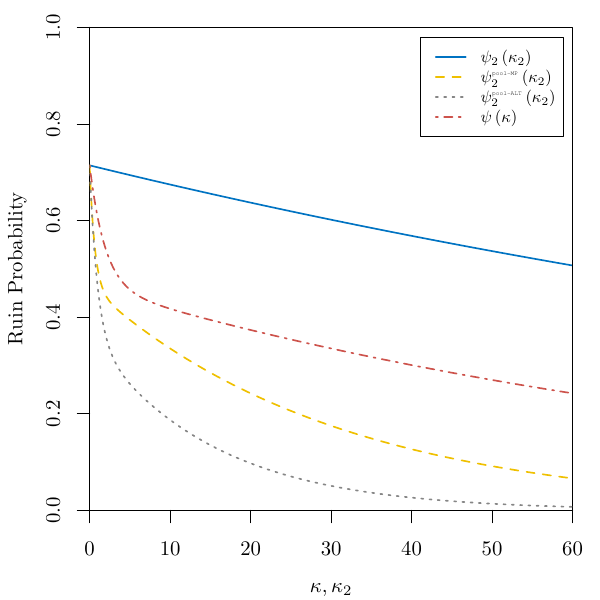}
	 \caption{}
  	\label{NumericalIllustrations-Section4-Figure3-b}
	\end{subfigure}
  	 \hspace{0.01cm}
	% Plot generated with the R code: Example 4.2.2 - Exponentially-distributed losses (Failure for two participants).R
        % We could generate other plots if needed.
          \begin{subfigure}[b]{0.32\linewidth}
 	 \includegraphics[width=4.5cm, height=4.5cm]{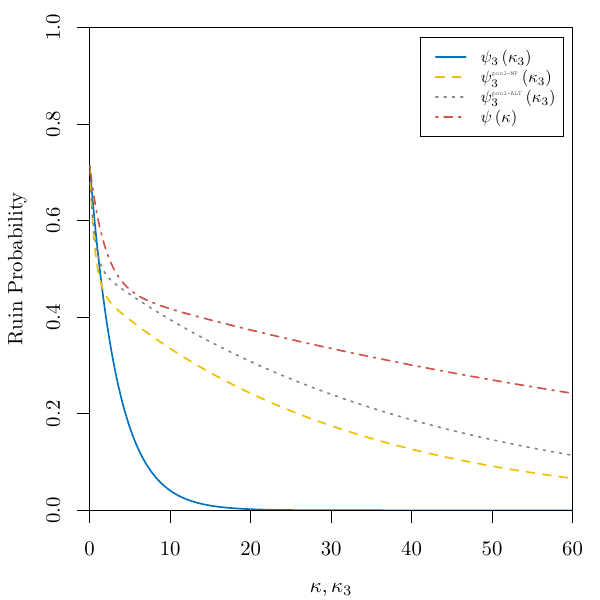}
	 \caption{}
  	\label{NumericalIllustrations-Section4-Figure3-c}
	\end{subfigure}
\caption{\textcolor{black}{Ruin probabilities for the pooled fund, individual and pooled surplus models under the mean-proportional (pool - MP) risk-sharing scheme and the alternative (pool - ALT) proportional risk-sharing scheme in the pool of Appendix \ref{SubSub422}.}}
\label{NumericalIllustrations-Section4-Figure3}
\end{figure}

\end{document}